\newcommand\dd{{\rm d}}
\newcommand\e{{\rm e}}
\newcommand\ddt{{\frac{\dd}{\dd t}}}
\newcommand{\N}{\mathbb{N}}
\newcommand{\Z}{\mathbb{Z}}
\newcommand{\R}{\mathbb{R}}
\newcommand{\T}{\mathbb{T}}
\newcommand{\cF}{{\mathcal F}}
\renewcommand{\S}{\mathbb{S}}
\newcommand{\Bo}{\mathcal{B}}
\newcommand{\Prob}{\mathbb{P}}
\newcommand{\EX}{\mathbb{E}}
\newcommand{\A}{\mathsf{A}}
\newcommand{\B}{\mathsf{B}}
\newcommand{\C}{\mathsf{C}}
\newcommand{\xvec}{\boldsymbol{x}}
\newcommand{\zvec}{\boldsymbol{z}}
\newcommand{\vvec}{\boldsymbol{v}}
\newcommand{\wvec}{\boldsymbol{w}}
\newcommand{\kvec}{\boldsymbol{k}}
\DeclareMathOperator{\supp}{supp}
\newcommand{\cP}{\mathcal{P}}
\newcommand{\dist}{\text{\rm dist}}
\newtheorem{theorem}{Theorem}
\newtheorem{proposition}{Proposition}[section]
\newtheorem{lemma}[proposition]{Lemma}
\newtheorem{corollary}[proposition]{Corollary}
\theoremstyle{definition}
\newtheorem{definition}[proposition]{Definition}
\newtheorem{remark}[proposition]{Remark}
\numberwithin{equation}{section}
\newcommand{\cB}{\mathcal{B}}
\newcommand{\F}{\mathcal{F}}
\newcommand{\X}{\mathcal{X}}
\numberwithin{equation}{section}
\renewcommand\subsubsection{\@startsection{subsubsection}{3}%
\normalparindent{.5\linespacing\@plus.7\linespacing}{-.5em}
{\normalfont\bfseries}}
\def\@tocline#1#2#3#4#5#6#7{\relax
  \ifnum #1>\c@tocdepth 
  \else
    \par \addpenalty\@secpenalty\addvspace{#2}%
    \begingroup \hyphenpenalty\@M
    \@ifempty{#4}{%
      \@tempdima\csname r@tocindent\number#1\endcsname\relax
    }{%
      \@tempdima#4\relax
    }%
    \parindent\z@ \leftskip#3\relax \advance\leftskip\@tempdima\relax
    \rightskip\@pnumwidth plus4em \parfillskip-\@pnumwidth
    #5\leavevmode\hskip-\@tempdima
      \ifcase #1
       \or\or \hskip 1em \or \hskip 2em \else \hskip 3em \fi%
      #6\nobreak\relax
    \dotfill\hbox to\@pnumwidth{\@tocpagenum{#7}}\par
    \nobreak
    \endgroup
  \fi}
\begin{document}

\title[3$d$ exponential mixing and ideal dynamo with randomized ABC flows]{Three-dimensional exponential mixing and  ideal kinematic dynamo \\ with randomized ABC flows}

\author[M. Coti Zelati]{Michele Coti Zelati}
\address{(MCZ) Department of Mathematics, Imperial College London, London, SW7 2AZ, UK}
\email{m.coti-zelati@imperial.ac.uk}

\author[V. Navarro-Fernández]{Víctor Navarro-Fernández}
\address{(VNF) Department of Mathematics, Imperial College London, London, SW7 2AZ, UK}
\email{v.navarro-fernandez@imperial.ac.uk}

\subjclass[2020]{35Q49, 37A25, 76F25, 76W05}

\keywords{ABC flows, exponential mixing, ideal kinematic dynamo}

\date{\today}

\begin{abstract}
In this work we consider the Lagrangian properties of a  random version of the Arnold--Beltrami--Childress (ABC) in a three-dimensional periodic box.
We prove that the associated flow map possesses a positive top Lyapunov exponent and its associated one-point, two-point and projective Markov chains are geometrically ergodic. 
For a passive scalar, it follows that such a velocity is a space-time smooth exponentially mixing field, uniformly in the diffusivity coefficient. For a passive vector, it provides an example of a universal ideal (i.e. non-diffusive) kinematic dynamo.
\end{abstract}

\maketitle

\tableofcontents

\section{Introduction}

The study of turbulent and chaotic dynamical systems holds significant interest in both mathematics and physics due to its ability to uncover the inherent unpredictability and complexity found in various natural phenomena, such as weather patterns and fluid behavior. Serving as a quintessential example of a simple flow with intricate dynamics, this paper examines the \emph{Arnold--Beltrami--Childress} (ABC) flows. 
These flows are characterized by a set of parameters $\A,\B,\C\in\R$ and are defined on the three-dimensional torus $\T^3=\R^3/(2\pi\Z)^3$ by the vector field
\begin{equation}\label{eq:abc}
u(\xvec) =
\begin{pmatrix}
\A\sin z + \C\cos y \\
\B\sin x+ \A\cos z \\
\C\sin y + \B\cos x 
\end{pmatrix} \in\R^3,
\end{equation}
where we write $\xvec = (x,y,z) \equiv (x\mod 2\pi, y\mod 2\pi, z\mod 2\pi)\in\T^3$.

This class of vector fields possesses numerous intriguing properties that render them relevant for various applications in mathematical physics. Firstly, they are divergence-free, meaning $\nabla\cdot u = 0$. Secondly, they satisfy the so-called \emph{Beltrami} property, which means that there holds the identity $u=\nabla\times u$ for all $\A,\B,\C\in\R$. In particular, they define a stationary solution to the 3$d$ Euler equations for an inviscid fluid.
Since the work of Arnold \cite{Arnold65}, ABC flows have been considered as extraordinary examples of simple flows with a complicated topology and Lagrangian structure. If one of the parameters $\A$, $\B$ or $\C$ is zero, the flow is integrable. However, if all three parameters are nonzero, the behavior of the streamlines becomes complex and unpredictable, indicative of chaos \cites{DFGHMS86,Henon66}.

Another significant context in which the ABC flows arise is magnetohydrodynamics, particularly when studying how the movement of a magnetic fluid can alter the intensity of the magnetic field itself. This is related to the \emph{dynamo effect}, which is of particular interest in fields such as geophysics and astrophysics for explaining phenomena like stellar magnetism \cite{BrunBrowning17}. Childress \cite{Childress70} proposed the ABC flows as potential candidate vector fields that could cause the magnetic field's intensity to grow rapidly.

In this paper, we will study how the Lagrangian structure of a randomized version of \eqref{eq:abc} exhibits chaotic behavior. Specifically, this has significant consequences from the point of view of mixing of passive scalars and ideal dynamo in the context of ideal magnetohydrodynamics.

\subsection{Mixing of passive scalars}\label{section:intro-mixing}
The mixing properties of a divergence-free vector field $u:[0,\infty)\times\T^3\to\R^3$ are typically understood in terms of its ability 
to transfer the energy of a passive tracer from large to small spatial scales. Let $\kappa\in [0,1]$ be a diffusion parameter, and consider the Cauchy problem
\begin{equation}\label{eq:adv-diff}
\begin{cases}
\partial_t \rho + u\cdot \nabla \rho  =  \kappa\Delta\rho, \quad  &\text{in }(0,\infty)\times\T^3,\\
\rho(0,\cdot)  =  \rho_0, & \text{in }\T^3.
\end{cases}
\end{equation}
The scalar function  $\rho:[0,\infty)\times\T^3\to\R$ can represent, for instance, the concentration of some chemical in a solution, or the temperature of a gas in a room,
starting at a suitable, mean-free initial datum $\rho_0$.

The effect of a divergence free-vector field can create complicated and chaotic structures in the evolution of the passive scalar, see Figure \ref{fig:alternating-shear}, where the filamentation and the decreasing typical length-scale that can be appreciated is symptomatic of mixing. One way to quantify mixing for \eqref{eq:adv-diff}
is via the so-called \emph{functional mixing scale}, that involves the decay of negative Sobolev norms, defined through the Fourier transform $\widehat \rho$ by
\begin{equation}\label{eq:negSob}
\|\rho(t)\|_{\dot{H}^{-s}}^2 = \sum_{\kvec\in\Z^3_0} \frac{|\widehat{\rho}(t,\kvec)|^2}{|\kvec|^{2s}}, \qquad s>0, \qquad  \Z^3_0 = \Z^3\setminus\{(0,0,0)\}.
\end{equation}
The use of negative Sobolev norms is enlightening about the cascading mechanism to higher frequencies (i.e. small scales). The conservation of the $L^2$ norm implies that the sum of all modes $|\widehat{\rho}(t,\kvec)|^2$ must be constant in time, therefore if the $H^{-s}$ norm of $\rho(t)$ converges to zero as $t\to\infty$, it must occur that the mass concentrated in the lower modes ($|\kvec|$ small) is transported to the higher modes ($|\kvec|$ large). 

\begin{figure}[ht]
    \centering
    \includegraphics[width=\textwidth]{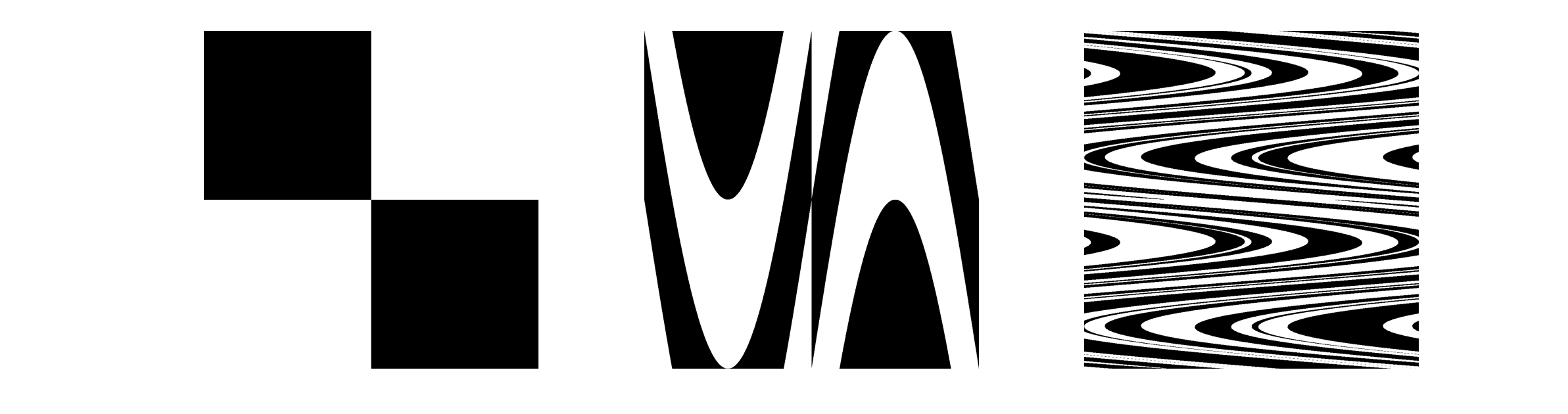}
    \caption{Mixing by alternating shear flows in $\T^2$}
    \label{fig:alternating-shear}
\end{figure}

This definition of mixing was first introduced in \cite{MathewMezicPetzold05}, and since then there have been a number of results and applications, some relevant examples of which are \cites{AlbertiCrippaMazzucato19a,ElgindiZlatos19,LunasinLinNovikovMazzucatoDoering12}. Other more geometrical definitions of mixing that under special circumstances might be equivalent to the one adopted here have been introduced to address these problems. For a review about the different ways of quantifying mixing, see \cite{Thiffeault12}.

It is well known that for sufficiently regular vector fields, mixing cannot be faster than exponential. This was proved for $u\in L^1_tW^{1,p}_{\xvec}$, $p\in (1,\infty]$, in \cite{CrippaDeLellis08} (see also \cites{IyerKiselevXu14,Seis13a}). Finding such lower bound for $p=1$ is related to Bressan's conjecture \cite{Bressan03} and remains open as of today. 

In this work we are interested in almost sure exponentially fast mixing in the context of randomized vector fields, where $u=u(t,\xvec,\omega)$ depends additionally on a random parameter $\omega$. Some examples of almost-sure exponential mixers have been recently provided via vector fields that are solutions to stochastically forced Navier--Stokes equations \cites{BBPS22a,BBPS22}, and via alternating shear flows in $\T^2$ with random phases \cite{BCZG23}, or with random switching times \cite{Cooperman23}. We provide an examples of a smooth, bounded exponentially mixing field in this article (see Theorem \ref{thm:mixing}).

The ideas presented in \cite{BCZG23} in the inviscid case $\kappa=0$, have been very recently extended in \cite{CIS24+} to the case of passive scalars that solve the advection-diffusion equation \eqref{eq:adv-diff} with $\kappa>0$. This means that apart from the transport by the vector field, also the molecular diffusion of the passive scalar is taken into account. In \cite{CIS24+}, the decay of the mixing norm is proved to be \emph{uniformly in the diffusion parameter}, so that the case without diffusion can be recovered as well. We will present our results using this strictly more general framework.

\subsection{Dynamos in passive vectors}\label{sub:passivevectors}
The evolution of a three-dimensional magnetic field $B=B(t,\xvec)$ in a homogeneous moving conductor is described by a combination of Maxwell's equations and Ohm's law, yielding the set of equations
\begin{equation}\label{eq:dynamo}
\begin{cases}
\partial_t B + (u\cdot\nabla) B - (B\cdot \nabla) u  =  \kappa \Delta B ,\quad  &\text{in }(0,\infty)\times\T^3, \\
\nabla\cdot B  =  0, & \text{in }(0,\infty)\times\T^3, \\
B(0,\cdot)  =  B_0, & \text{in }\T^3.
\end{cases}
\end{equation}
Here, $u$ is the velocity field of the moving conductor, $\kappa\geq 0$ is the magnetic resistivity of the medium, and $B_0$ an assigned, mean-free initial condition.

Solutions of \eqref{eq:dynamo} can behave quite differently compared to those of \eqref{eq:adv-diff}. If $u$ is Lipschitz, a standard stability estimate and the Poincaré inequality give that the $L^2$ norm of $B$ cannot grow faster than exponential, due to the energy estimate
\[
    \|B(t)\|_{L^2} \leq \|B_0\|_{L^2} \e^{(\|\nabla u\|_{L_{t,x}^\infty}-\kappa)t}.
\]
The potential growth of the $L^2$ norm is due to the  stretching term $(B\cdot\nabla)u$. 
Using the terminology of \cite{ChildressGilbert}, we define different dynamo behaviors in terms of the asymptotic growth rate
\begin{equation}
\chi(\kappa):= \sup_{B_0\in L^2} \limsup_{t\to\infty} \frac1t \log\|B(t)\|_{L^2},
\end{equation}
uniformly with respect of initial data in $L^2$:
\begin{itemize}
    \item \emph{Ideal dynamo}: $\chi(0)>0$. This involves only the non-resistive case, when we set $\kappa=0$ in \eqref{eq:dynamo}, and postulate the existence of a velocity field that exhibits exponential growth of the magnetic field. We will provide an example below (see Theorem \ref{thm:fast-dynamo});
    \item \emph{Fast dynamo}: $\chi_0:=\displaystyle \liminf_{\kappa\to0}\chi(\kappa)>0$. Informally, the exponential growth rate of $\|B(t)\|_{L^2}$ becomes independent of $\kappa$ in the perfectly conducting limit, and whether this happens is an outstanding open problem, formulated by Y.B. Zeldovich in 1957 in \cite{zil1957magnetic}, and mentioned in the Arnold's problem monograph  \cite{Arnold04}*{1994-28} (see also \cite{ArnoldKhesin}*{Chapter V} or \cite{ChildressGilbert}).
    \item \emph{Slow dynamo}: $\chi_0:=\displaystyle \liminf_{\kappa\to0}\chi(\kappa)\leq 0$. In this case, the exponential growth is suppressed as $\kappa\to 0$. Evidence of this behavior (with $\chi(\kappa)\sim \kappa^\frac13$) is provided by the so-called Ponomarenko dynamo \cites{Ponomarenko73,gilbert1988fast}.
\end{itemize}
Besides being interesting on its own from a mathematical perspective, this problem has relevant applications to astrophysics and geophysics, in particular to stellar magnetism \cites{Childress70,KumarRoberts75}.
The intuitive idea behind a fast dynamo is the \emph{Stretch--Twist--Fold} mechanism \cite{ChildressGilbert}, which displays a discrete in time flow that \emph{doubles} the density of magnetic lines on each iteration. ABC flows were proposed by Childress \cite{Childress70} as suitable candidates to be a fast dynamo in 3$d$, as their streamlines appear to be chaotic \cites{Arnold65,DFGHMS86,ZKH93}. This claim is also supported by a strong numerical evidence \cites{Alexakis11,IsmaelEmmanuel13,JonesGilbert14}.

\subsection{The Lagrangian picture and main results}\label{subsection:definition-flow}

The two problems described in Sections \ref{section:intro-mixing} and \ref{sub:passivevectors} share the same Lagrangian picture, in terms of the associated flow map 
\begin{equation}\label{eq:sde-diffusion}
    \dd\phi_t(\xvec) = u(t,\phi_t(\xvec))\dd t + \sqrt{2\kappa}\dd W_t, \quad \phi_0(\xvec)=\xvec.
\end{equation}
where $W_t$ represents a standard Brownian motion with periodic boundary conditions in $\T^3$. 
Using the Feynman-Kac formula, solutions to \eqref{eq:adv-diff} are given by 
$$
\rho(t,\xvec) = \EX_W\left[(\phi_t)_\#\rho_0(\xvec)\right],
$$
while for \eqref{eq:dynamo} we have
\begin{equation}\label{eq:FKpassiveV}
B(t,\xvec) = \EX_W\left[(\phi_t)_\#D_{\xvec} \phi_t B_0(\xvec)\right].
\end{equation}
In both cases above, the expectation is taken with respect to the noise associated to the Brownian motion, and it is therefore denoted by $\EX_W$. Nonetheless, this may not be the only source of randomness, as $u$ may contain some stochasticity itself. This is indeed the point of view we take here.

Let $U>0$ be a fixed real number, and consider two collections of random variables $(\A,\B,\C)\in [-U,U]^3\subset\R^3$, $(\alpha,\beta,\gamma)\in[0,2\pi)^3\subset\R^3$ which are independent and identically distributed (IID). These variables represent random amplitudes and random phases respectively that will be inserted in the ABC vector field \eqref{eq:abc}. We define the probability space where the random variables take values as $(\Omega_0,\F_0,\Prob_0)$, with 
\[
\Omega_0 = [-U,U]^3\times[0,2\pi)^3, \quad \F_0=\Bo([-U,U]^3\times[0,2\pi)^3),
\]
and $\Prob_0$ is the uniform probability measure in $\Omega_0$.

The $(\A,\alpha)$, $(\B,\beta)$ and $(\C,\gamma)-$flows are defined respectively by
\begin{align*}
f_{(\A,\alpha)}(x,y,z) = 
\begin{pmatrix} 
x + \A\sin (z+\alpha)  \\
y + \A\cos (z+\alpha)  \\
z  
\end{pmatrix} ,\\
f_{(\B,\beta)}(x,y,z) =\begin{pmatrix} 
x  \\
y + \B\sin (x+\beta)  \\
z + \B\cos (x+\beta)
\end{pmatrix},\\
f_{(\C,\gamma)}(x,y,z) = \begin{pmatrix} 
x + \C\cos (y+\gamma)  \\
y   \\
z + \C\sin (y+\gamma) 
\end{pmatrix}.
\end{align*}
At the $i$-th iteration (without loss of generality we take it to be of duration 1), we make a random choice of parameters $\omega_i=(\A_i,\B_i,\C_i,\alpha_i,\beta_i,\gamma_i)\in\Omega_0$ and apply the maps $f_{(\cdot,\cdot)}$ to obtain the composition
\[
f_{\omega_i}(x,y,z) = \left(f_{(\C_i,\gamma_i)}\circ f_{(\B_i,\beta_i)}\circ f_{(\A_i,\alpha_i)}\right)(x,y,z).
\]
The corresponding velocity vector $u$, which depends on the noise path $\underline{\omega}=(\omega_1,\omega_2,\hdots)\in\Omega = \Omega_0^\N$, is given by 
\begin{equation}\label{eq:random-abc-vectorfield}
\begin{aligned}
u (t,\xvec,\underline{\omega}) & = \displaystyle \sum_{i=0}^\infty  
\begin{pmatrix}
    \A_i \sin (z+\alpha_i)  \\
    \A_i \cos(z+\alpha_i) \\
    0
\end{pmatrix}
  \chi_{\left[3i,3i+1\right)}(t) + \sum_{i=0}^\infty 
\begin{pmatrix}
    0  \\
    \B_i \sin(x+\beta_i) \\
    \B_i \cos(x+\beta_i)
\end{pmatrix}
\chi_{\left[3i+1,3i+2\right)}(t) \\ 
& \displaystyle \quad + \sum_{i=0}^\infty 
\begin{pmatrix}
    \C_i \cos(y+\gamma_i)  \\
    0 \\
    \C_i \sin(y+\gamma_i)
\end{pmatrix}
 \chi_{\left[3i+2,3i+3\right)}(t),
\end{aligned}
\end{equation}
with $\chi_{[a,b)}(t)$ denoting the characteristic function of an interval $[a,b)$ in the real line. Observe that even though this vector field is not smooth in $t$, it can be turned into a smooth vector field with the addition of a suitable mollifying ``switching'' function.
 
In the time discrete setting presented here, the flow $\phi_t$ is recovered for $t=3n\in\N$ by
\begin{equation}\label{eq:ABCflow-solution}
\phi_{3n}(\xvec) = f_{\underline{\omega}}^n(\xvec) = f_{\omega_n}\circ f_{\omega_{n-1}}\circ \hdots \circ f_{\omega_1}(\xvec).
\end{equation}
The main results of this article are that $u$ is an exponentially mixing vector field on $\T^3$, uniformly in the diffusivity parameter $\kappa\geq 0$, and is also an example of an ideal dynamo. We begin by stating the result on passive scalars.

\begin{theorem}\label{thm:mixing}
Let $(\phi_t)_{t\geq 0}$ be the flow defined by the SDE \eqref{eq:sde-diffusion} with $\kappa\geq 0$ and $u$ the random ABC vector field \eqref{eq:random-abc-vectorfield}. For any $q,s>0$ there exist a random constant $\widehat{D}_{\underline{\omega},\kappa}\geq 0$ and a deterministic, $\kappa-$independent constant $\lambda_s>0$ such that for all mean free $g,h\in H^s(\T^3)$, there holds
\[
\left| \int_{\T^3} g(\xvec)h(\phi_t(\xvec))\dd \xvec \right| \leq \widehat{D}_{\underline{\omega},\kappa}\|g\|_{\dot{H}^s} \|h\|_{\dot{H}^s}\e^{-\lambda_s t},
\]
almost surely for all $t>0$. Moreover, there exists a $\kappa-$independent constant $\overline{D}_q>0$ such that
\[
\EX_\Prob |\widehat{D}_{\cdot,\kappa}|^q\leq \overline{D}_q.
\]
\end{theorem}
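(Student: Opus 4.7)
The plan is to follow the harmonic-analytic framework developed in BBPS22 and refined in BCZG23 and CIS24+ for random shear flows: reduce the decay of correlations to a geometric contraction in a weighted norm for the \emph{two-point} Markov chain associated with the random map $f_{\underline{\omega}}^n$, and derive the latter from positivity of the top Lyapunov exponent together with geometric ergodicity of the projective chain, exactly the package announced in the abstract. The key observation that makes everything work uniformly in $\kappa\in[0,1]$ is that the Brownian increments $\sqrt{2\kappa}\,\dd W_t$ only enhance the smoothing of the one-step kernel, so all Harris/Doeblin estimates can be established for the random maps \eqref{eq:ABCflow-solution} and then transferred to the SDE \eqref{eq:sde-diffusion} with the same rates.

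First I would set up three Markov chains on compact state spaces: the one-point chain on $\T^3$ (whose unique stationary measure is Lebesgue thanks to the random phases $\alpha_i,\beta_i,\gamma_i$), the two-point chain on the open set $(\T^3\times\T^3)\setminus\Delta$ away from the diagonal, and the projective chain on $\T^3\times\S^2$ tracking a tangent direction under $D f_{\underline{\omega}}^n$. The structural inputs are then (a) geometric ergodicity of the projective chain with a unique stationary measure $\nu$, (b) positivity of the associated top Lyapunov exponent $\lambda_1=\int \log|Df\cdot \vvec|\,\dd\nu>0$, and (c) geometric ergodicity of the two-point chain with respect to a Lyapunov function of the form $V(\xvec,\zvec)=d(\xvec,\zvec)^{-\eta}$ for some small $\eta>0$. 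Once (a)--(c) are in hand, the mixing estimate follows by expanding $g$ and $h$ in Fourier modes on $\T^3$, rewriting the correlation as a sum of two-point expectations $\EX_W[\e^{\mathrm{i}\kvec\cdot(\phi_t(\xvec)-\phi_t(\zvec))}]$, and invoking the $V$-weighted contraction together with the $|\kvec|^{-s}$ decay supplied by $g,h\in\dot H^s$. The random constant $\widehat{D}_{\underline{\omega},\kappa}$ then arises as a drift functional along the noise path; its $L^q$ moments are uniform in $\kappa$ because $V^q$ is integrable for $\eta q<3$ and because the Harris constants do not depend on $\kappa$.

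The hard part will be verifying (a) and (b), in particular the irreducibility and Hörmander-type non-degeneracy for the projective chain. One must show that the three randomized ABC shear maps $f_{(\A_i,\alpha_i)}$, $f_{(\B_i,\beta_i)}$, $f_{(\C_i,\gamma_i)}$, together with their derivatives, act sufficiently transitively on $\T^3\times\S^2$ so that a Furstenberg-type argument yields simultaneously the uniqueness of $\nu$ and the strict positivity $\lambda_1>0$; this is where the random amplitudes $\A_i,\B_i,\C_i$ and the independence across iterations are essential, since the uniform phases alone only give translation invariance on the base. Geometric ergodicity of the projective chain then follows from Harris's theorem once a minorisation condition is extracted from the absolute continuity of the conditional laws in the $\S^2$-variable.

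Finally, a standard propagation argument converts (a)--(b) into (c): near the diagonal, writing $\zvec=\xvec+\epsilon\vvec+O(\epsilon^2)$, the two-point chain is well approximated by the projective chain, so the drift of $-\log d(\xvec_n,\zvec_n)$ equals $\lambda_1>0$ up to a multiplicative correction controlled by the ergodic average, and this furnishes the supersolution of the Lyapunov inequality for $V$. Away from the diagonal, geometric ergodicity of the one-point chain is enough. For $\kappa>0$, a perturbative comparison of the diffused kernel with the $\kappa=0$ one (using that one discrete step is a smooth diffeomorphism plus a Gaussian convolution of variance $O(\kappa)$) shows that all the Harris ingredients persist with $\kappa$-independent constants, yielding the uniform rate $\lambda_s$ and the uniform moment bound $\overline{D}_q$ claimed in the theorem.
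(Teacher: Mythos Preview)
Your proposal is correct and follows essentially the same architecture as the paper: set up the one-point, projective, and two-point chains, verify Harris-type hypotheses (irreducibility, small sets, aperiodicity) for each, use a Furstenberg criterion to get $\lambda_1>0$, feed this into a drift condition $V\sim d(\xvec,\zvec)^{-\eta}$ near the diagonal for the two-point chain, and then invoke the BCZG23/CIS24+ machinery to pass from two-point geometric ergodicity to correlation decay. The only procedural differences are that the paper (i) carries out the irreducibility and non-degeneracy checks by explicit controllability arguments and hand-computed matrix ranks rather than an abstract H\"ormander-type argument, and (ii) obtains the $\kappa$-uniformity by quoting the CIS24+ result (Proposition~\ref{prop:discrete-mixing}) as a black box---so only the $\kappa=0$ two-point chain needs to be analyzed---rather than by your proposed perturbative comparison of the diffused and inviscid kernels.
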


The proof of this theorem is presented in Section \ref{section:proof-mixing}, and it is based on ideas first introduced in \cite{BCZG23} for the purely transport case $\kappa=0$, and recently extended to the advection-diffusion equation $\kappa>0$ in \cite{CIS24+}.

The result in Theorem \ref{thm:mixing} gives a decay of the so-called \emph{correlations} between any $H^s$ and mean free functions $g$ and $h$. This is a standard definition of mixing in ergodic theory, and it yields the following consequences for the passive scalar $\rho$ that solves \eqref{eq:adv-diff}.

\begin{corollary}\label{corollary:mixing-dissipation}
Let $\rho$ be a solution to the advection-diffusion equation \eqref{eq:adv-diff} transported by the random ABC vector field \eqref{eq:random-abc-vectorfield} with initial configuration $\rho_0\in H^s(\T^3)$ and mean free. Using the notation from Theorem \ref{thm:mixing}, we obtain the following estimates.
\begin{itemize}
    \item Exponential mixing $(\kappa\geq 0)$: For any $s>0$, 
    \begin{equation}\label{eq:scalmix}
      \|\rho(t)\|_{\dot{H}^{-s}} \leq \widehat{D}_{\underline{\omega},\kappa}\|\rho_0\|_{\dot{H}^s}\e^{-\lambda_s t},      
    \end{equation}
    for every $t\geq0$.
    \item Enhanced dissipation $(\kappa>0)$: For any $s>0$,  
    \begin{equation}\label{eq:scalenhanced}
    \|\rho(t)\|_{L^2} \leq \frac{\widehat{D}_{\underline{\omega},\kappa}}{\kappa^{\frac{3}{2}+s}}\|\rho_0\|_{L^2}\e^{-\lambda_s t},      
    \end{equation}
    for every $t\geq0$.
\end{itemize}
\end{corollary}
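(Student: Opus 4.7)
The plan for the mixing bound \eqref{eq:scalmix} is to dualise to $\dot H^s$ and invoke Theorem \ref{thm:mixing}. Using the Feynman--Kac representation $\rho(t,\xvec) = \EX_W[\rho_0(\phi_t^{-1}(\xvec))]$ from Section \ref{subsection:definition-flow}, together with the fact that $\phi_t$ is almost surely Lebesgue measure--preserving on $\T^3$ (since it is the composition of a divergence--free ODE flow with the periodic Brownian translation), the change of variables $\xvec\mapsto\phi_t^{-1}(\xvec)$ gives, for any mean--free $g\in\dot H^s(\T^3)$,
\[
\int_{\T^3} g(\xvec)\,\rho(t,\xvec)\,\dd\xvec \;=\; \EX_W\int_{\T^3} g(\phi_t(\xvec))\,\rho_0(\xvec)\,\dd\xvec.
\]
Pulling the absolute value inside $\EX_W$ and applying the almost sure bound of Theorem \ref{thm:mixing} pathwise to the inner integral (with $h=\rho_0$), then taking the supremum over $g$ of unit $\dot H^s$--norm, yields \eqref{eq:scalmix}.

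For the enhanced dissipation estimate \eqref{eq:scalenhanced}, I will combine \eqref{eq:scalmix} with the energy identity $\ddt\|\rho(t)\|_{L^2}^2=-2\kappa\|\nabla\rho(t)\|_{L^2}^2$ and the Gagliardo--Nirenberg interpolation
\[
\|\rho\|_{L^2}^{1+s} \leq \|\rho\|_{\dot H^{-s}}\,\|\rho\|_{\dot H^1}^{s},
\]
valid on $\T^3$ for mean--free functions. Together these yield the Bernoulli--type inequality
\[
\ddt\|\rho(t)\|_{L^2}^2 \;\leq\; -2\kappa\,\frac{\|\rho(t)\|_{L^2}^{2(1+s)/s}}{\|\rho(t)\|_{\dot H^{-s}}^{2/s}}.
\]
Inserting \eqref{eq:scalmix} on the right makes this a separable ODE for $y(t)=\|\rho(t)\|_{L^2}^2$; explicit integration past an $O(1)$ burn--in time produces a bound of the form $\|\rho(t)\|_{L^2} \lesssim \widehat D_{\underline{\omega},\kappa}\,\kappa^{-s/2}\|\rho_0\|_{\dot H^s}\,\e^{-\lambda_s t}$.

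To bridge from $\dot H^s$ data to $L^2$ data, I will exploit the parabolic smoothing of \eqref{eq:adv-diff}. The basic energy estimate provides $\kappa\int_0^1\|\nabla\rho\|_{L^2}^2\,\dd t\leq\tfrac12\|\rho_0\|_{L^2}^2$, so that some intermediate time $t_\ast\in(0,1)$ satisfies $\|\rho(t_\ast)\|_{\dot H^1}\lesssim \kappa^{-1/2}\|\rho_0\|_{L^2}$. Propagating this forward using the $\dot H^1$--energy inequality (where the transport term is controlled by $\|\nabla u\|_{L^\infty}$, which is a.s.\ bounded for \eqref{eq:random-abc-vectorfield}) and iterating a dyadic bootstrap to reach $\dot H^s$, with Sobolev embedding in $\T^3$ used to close the commutator terms at the highest regularity, produces a short--time smoothing bound of the shape $\|\rho(t_0)\|_{\dot H^s}\lesssim \kappa^{-(3/2+s)/2-s/2}\|\rho_0\|_{L^2}$ at some fixed $t_0$ of order one; composing this smoothing with the Bernoulli argument from time $t_0$ then gives \eqref{eq:scalenhanced} with the stated exponent $3/2+s$.

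The main obstacle is pinning down the precise power $\kappa^{-(3/2+s)}$ in the smoothing step: in three dimensions the transport term $u\cdot\nabla\rho$ cannot be absorbed into the diffusion at unit regularity without paying a Sobolev embedding price, and iterating this between $L^2$, $\dot H^1$ and $\dot H^s$ produces the $3/2$ contribution. This bookkeeping is essentially what the abstract $\kappa$--uniform framework of \cite{CIS24+} is designed to package, and the cleanest route here will be to invoke their machinery directly, feeding in the flow--level correlation estimate of Theorem \ref{thm:mixing} as the black--box input.
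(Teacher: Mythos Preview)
Your approach matches the paper's. The paper does not give a formal proof of this corollary: it simply says that \eqref{eq:scalmix} follows from Theorem~\ref{thm:mixing} ``via duality'', and for \eqref{eq:scalenhanced} it refers the reader to \cites{BBPS21,CIS24+,TZ23}. Your proposal does exactly the same --- duality plus Feynman--Kac for the mixing bound, and ultimately deferring to \cite{CIS24+} for enhanced dissipation --- so there is no substantive difference.

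Your sketch of the enhanced--dissipation mechanism (energy identity, the interpolation $\|\rho\|_{L^2}^{1+s}\le\|\rho\|_{\dot H^{-s}}\|\rho\|_{\dot H^1}^{s}$, the resulting Bernoulli ODE, then parabolic smoothing to upgrade $L^2$ data to $\dot H^s$) is more than the paper offers and is the correct skeleton. One bookkeeping remark: your stated smoothing exponent $\kappa^{-(3/2+s)/2-s/2}=\kappa^{-(3/4+s)}$ does not combine with the $\kappa^{-s/2}$ from the Bernoulli step to give the target $\kappa^{-(3/2+s)}$; you would need $\|\rho(t_0)\|_{\dot H^s}\lesssim\kappa^{-(3/2+s/2)}\|\rho_0\|_{L^2}$ instead. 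You already flag this as the delicate point and fall back on \cite{CIS24+}, which is precisely the paper's route, so this is not a gap in the proposal, just arithmetic to tidy if you want to make the sketch self-contained.
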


The decay of correlations from Theorem \ref{thm:mixing} implies the decay of all $\dot{H}^{-s}$ norms (mixing)in \eqref{eq:scalmix} via duality. In particular, Corollary \ref{corollary:mixing-dissipation} shows that \eqref{eq:random-abc-vectorfield} is an example of a smooth almost-sure exponential mixer in $\T^3$, which is the fastest possible mixing rate for Lipschitz vector fields.

Corollary \ref{corollary:mixing-dissipation} also showcases that the vector field \eqref{eq:random-abc-vectorfield} enhances dissipation, see \eqref{eq:scalenhanced}. If $\kappa>0$, mean free solutions to advection-diffusion equations with divergence free vector fields display an exponential decay of their $L^2$ norm. In general, the effect of diffusion makes the $L^2$ norm of the passive scalar to be halved in a timescale $O(\kappa^{-1})$. On the other hand, the presence of a mixing velocity $u$ enhances dissipation \cites{ConstantinKiselevRyzhikZlatos08,CotiZelatiDelgandioElgindi20}, and the example of Corollary \ref{corollary:mixing-dissipation} shows that the halving time of $\|\rho(t)\|_{L^2}$ is $O(|\log\kappa|)$ -- which is \emph{optimal} at this regularity level  \cite{Seis23}. For a proof of the enhanced dissipation estimate given uniform--in--$\kappa$ decay of correlations, and other examples see \cites{BBPS21,CIS24+,TZ23}.

For passive vectors, we dwell with the non-resistive setting $\kappa=0$. 

\begin{theorem}\label{thm:fast-dynamo}
Let $\kappa=0$, $p\in [1,\infty]$, and consider an initial datum $0\neq B_0\in L^p(\T^3)$. Then there exist a deterministic constant $\eta>0$, and a $\Prob-$almost surely positive random constant $\hat{k}_{\underline{\omega}}>0$ such that the solution $B(t,\xvec)$ to \eqref{eq:dynamo} satisfies
\[
\|B(t)\|_{L^p} \geq \hat{k}_{\underline{\omega}}\e^{\eta t},
\]
for all $t\geq 0$. In particular, the random ABC vector field $u$ \eqref{eq:random-abc-vectorfield} is an ideal dynamo with probability 1.
\end{theorem}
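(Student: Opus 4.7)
The core of my approach would be the Lagrangian representation formula. Setting $\kappa=0$ in the Feynman--Kac formula \eqref{eq:FKpassiveV}, I recover the deterministic pullback identity
\[
B(t,\phi_t(\xvec)) = D_{\xvec}\phi_t\,B_0(\xvec).
\]
Each of the maps $f_{(\A,\alpha)}$, $f_{(\B,\beta)}$, $f_{(\C,\gamma)}$ is volume-preserving on $\T^3$, hence so is the composition $\phi_t$, and change of variables yields
\[
\|B(t)\|_{L^p}^p = \int_{\T^3}\bigl|D_{\xvec}\phi_t\,B_0(\xvec)\bigr|^p\,\dd\xvec,\qquad p\in[1,\infty),
\]
with the analogous identity for $p=\infty$. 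Since $\|\cdot\|_{L^p}\ge |\T^3|^{-(1-1/p)}\|\cdot\|_{L^1}$ on $\T^3$ for every $p\in[1,\infty]$, it suffices to prove the exponential lower bound in $L^1$.

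Since $B_0\neq 0$ in $L^p$, I would choose $\delta>0$ and a Borel set $E\subset\T^3$ of positive Lebesgue measure with $|B_0|\ge\delta$ on $E$; the unit direction field $\widehat{B}_0:=B_0/|B_0|$ is then well defined on $E$. Dropping the integrand outside $E$ and applying Jensen's inequality for $\log$ gives
\[
\|B(t)\|_{L^1}\ge \delta\int_E\bigl|D_{\xvec}\phi_t\,\widehat{B}_0(\xvec)\bigr|\,\dd\xvec \ge \delta|E|\,\exp\!\left(\frac{1}{|E|}\int_E\log\bigl|D_{\xvec}\phi_t\,\widehat{B}_0(\xvec)\bigr|\,\dd\xvec\right).
\]
The proof then reduces to showing that this integrated log-stretch grows $\Prob$-almost surely at linear rate at least $\lambda_1|E|$.

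Here is where the two ergodic inputs asserted in the paper enter: positivity of the top Lyapunov exponent $\lambda_1>0$, and geometric ergodicity of the projective Markov chain on $\T^3\times S^2$ associated with $(\xvec,v)\mapsto(\phi_1(\xvec),D\phi_1(\xvec)v/|D\phi_1(\xvec)v|)$. Geometric ergodicity forbids proper measurable invariant sub-bundles of $\T^3\times S^2$, which makes the top Lyapunov exponent simple and direction-independent, so that a Furstenberg--Kifer/Oseledets-type theorem yields: $\Prob$-a.s., for Lebesgue a.e.\ $\xvec$ and every $v\neq 0$,
\[
\lim_{t\to\infty}\frac{1}{t}\log|D_{\xvec}\phi_t\,v|=\lambda_1.
\]
Applying this with $v=\widehat{B}_0(\xvec)$ on $E$, together with the deterministic envelope $\tfrac{1}{t}\log|D\phi_t v/|v||\ge -C_U$ (where $C_U$ is a deterministic bound on $\|\nabla u\|_{L^\infty}$ coming from the compact amplitude range $\A,\B,\C\in[-U,U]$), Fatou's lemma produces
\[
\liminf_{t\to\infty}\frac{1}{t}\int_E\log|D_{\xvec}\phi_t\,\widehat{B}_0(\xvec)|\,\dd\xvec\ge\lambda_1|E|\quad\Prob\text{-a.s.}
\]
For any fixed $\eta\in(0,\lambda_1)$ this yields an almost sure random threshold $T_{\underline\omega}$ beyond which $\|B(t)\|_{L^1}\ge\delta|E|\,\e^{\eta t}$; for $t\le T_{\underline\omega}$ the elementary Lagrangian energy estimate $\|B(t)\|_{L^1}\ge\|B_0\|_{L^1}\e^{-C_Ut}$ can be absorbed into a smaller positive a.s.\ prefactor $\hat k_{\underline\omega}$, giving the bound for all $t\ge 0$. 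The general $L^p$ statement then follows from the reduction in the first paragraph.

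The main obstacle is the upgrade from Oseledets-a.e.\ initial direction to the pointwise statement valid for Lebesgue a.e.\ $\xvec$ and \emph{every} non-zero direction $v$. Simplicity of $\lambda_1$ and the absence of invariant measurable sub-bundles of the tangent bundle---both inherited from geometric ergodicity of the projective chain---are the key inputs, as they rule out pathological alignment of $\widehat{B}_0(\xvec)$ with a slower-growing Oseledets direction. A routine Fubini step then converts the product null set $\{(\underline\omega,\xvec):\text{the limit fails}\}$ into an $\underline\omega$-a.s.\ Lebesgue-null slice of exceptional $\xvec\in E$, which is what the Fatou step requires.
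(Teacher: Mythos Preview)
Your proposal is correct and follows essentially the same route as the paper: the Lagrangian representation $B(t,\phi_t(\xvec))=D_{\xvec}\phi_t\,B_0(\xvec)$, reduction to $L^1$ via H\"older, the Kifer-type direction-independent Lyapunov statement (Proposition~\ref{prop:lambda-all-directions} in the paper), and a stability bound for short times. The one technical difference is how the pointwise Lyapunov growth is turned into an $L^1$ lower bound: the paper first extracts a pointwise constant $\hat k_1(\xvec,\underline\omega,\vvec,\varepsilon)>0$ with $|D_{\xvec}f^n_{\underline\omega}\vvec|\ge \hat k_1\,\e^{(\lambda_1-\varepsilon)n}$ (Corollary~\ref{corollary:dynamo}) and then integrates $\hat k_1|B_0|$, whereas you apply Jensen to pass to $\int_E\log|D\phi_t\widehat B_0|$ and then Fatou with the deterministic lower envelope $-C_U$. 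Your variant is marginally cleaner, as it avoids having to argue measurability and strict positivity of the integrated random constant $\hat k_2(\underline\omega)=\int \hat k_1(\xvec,\underline\omega,\widehat B_0(\xvec),\varepsilon)|B_0(\xvec)|\,\dd\xvec$; the paper's variant is more direct once Corollary~\ref{corollary:dynamo} is in hand.
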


The idea of considering stochastic vector fields appears naturally from the necessity of studying objects like the Lyapunov exponents of the flow, and it has been proposed before with e.g.\ a Gaussian random field \cite{BaxendaleRozovskii93}, or the ABC flow itself with fluctuating coefficients \cite{TominSokoloff10}. The result in Theorem \ref{thm:fast-dynamo}, obtained from the techniques applied for Theorem \ref{thm:mixing}, is to the best of our knowledge the first rigorous proof of ABC flows being an ideal kinematic dynamo. The case with magnetic diffusivity remains an open problem.

\subsection*{Organisation of the paper} 

In Section \ref{section:harris} we introduce some concepts and preliminary results needed to prove Theorems \ref{thm:mixing} and \ref{thm:fast-dynamo}. In particular we define three Markov chains in terms of the flow generated by the vector field \eqref{eq:random-abc-vectorfield}, we present a suitable version of Harris Theorem about the ergodicity of the Markov chains, and we introduce some notions from the theory of random dynamics that will be key to prove our results. In Section \ref{section:proof-mixing} we prove the dynamical properties required for the random ABC vector field to be an exponential mixer. This includes showing topological irreducibility, aperiodicty and small sets of the different Markov chains, as well as the positivity of the Lyapunov exponent of the flow defined by $u$. Finally, in Section \ref{section:dynamo}, as a byproduct of the ergodic properties of the random flow, we prove Theorem \ref{thm:fast-dynamo}.

\section{Harris theorem and preliminary results}\label{section:harris}

In this section we introduce preliminary notions and results that will be important to prove Theorems \ref{thm:mixing} and \ref{thm:fast-dynamo}. First of all we introduce three different Markov transition kernels that describe the dynamics of the inviscid flow $f_\omega$ generated by the stochastic ABC vector field \eqref{eq:random-abc-vectorfield}.
\begin{enumerate}
    \item \emph{The one-point chain} in $\T^3$. 
    It describes the probability of a particle that started at position $\xvec\in \T^3$, to be in a Borel set $A\in\cB(\T^3)$ after one full iteration of the flow map,
    \[
    P(\xvec,A) = \Prob_0[f_\omega(\xvec)\in A].
    \]
    \item \emph{The projective chain} in $\T^3\times\S^2$. It describes the dynamical system defined the position $\xvec\in\T^3$ and direction of movement $\vvec\in\S^2$ of a single particle. For this consider a Borel set $\hat{A}\in\Bo(\T^3\times \S^2)$, then we define the projective Markov chain as
    \[
    \hat{P}((\xvec,\vvec),\hat{A})= \Prob_0\left[ \left(f_\omega(\xvec),\frac{D_{\xvec} f_\omega\vvec}{|D_{\xvec} f_\omega\vvec|}\right)\in \hat{A} \right].
    \]
    \item \emph{The two-point chain} in $(\T^3\times\T^3)\setminus\Delta$. Here $\Delta = \{(\xvec^1,\xvec^2)\in\T^3\times\T^3\mid \xvec^1=\xvec^2\}$ represents the diagonal in the product space $\T^3\times\T^3$. The two-point chain gives information about the position of two particles starting at $\xvec^1\in\T^3$ and $\xvec^2\in\T^3$ respectively, with $\xvec^1\neq\xvec^2$. With this notation in hand, given any Borel set $A^{(2)}\in\cB((\T^3\times\T^3)\setminus\Delta)$, and any $(\xvec^1,\xvec^2)\in(\T^3\times\T^3)\setminus\Delta$, we define the two-point Markov chain via
    \[
    P^{(2)}((\xvec^1,\xvec^2),A^{(2)})= \Prob_0\left[ \left(f_\omega(\xvec^1),f_\omega(\xvec^2)\right)\in A^{(2)} \right].
    \]
\end{enumerate}

In the following section we will introduce some basic theory about the Markov trasition kernel that will be applicable for the three different chains: one-point, projective and two-point. We present the results in a more abstract setting so that they are applicable in for the three of them. To do so, we denote the ambient space by $\X$, and we assume it to be a complete metric space.

\subsection{Markov chains and Harris theorem}

Let $P$ be a Markov transition kernel on $\X$. In what comes next we will denote by $\cB(\X)$ the set of Borel measurable sets on $\X$, and $\cP(\X)$ the set of probability measures on $\X$. In addition $\chi_A$ denotes the characteristic function of $A\in\cB(\X)$. With this notation in mind, we introduce the following features of Markov transition kernels.
\begin{itemize}
\item $P(\xvec,\cdot)$ is a probability measure on $\X$ for each $\xvec\in\X$.
\item Iterations of the Markov transition kernel are defined via Chapman-Kolmogorov,
\[
P^{n+1}(\xvec,A) = \int_{\X} P^n(\zvec,A)P(\xvec,\dd \zvec),
\]
for all $n\in\N$ and Borel $A\in\cB(\X)$.
\item $P$ acts on bounded functions $h:\X\to\R$ via
\[
Ph(\xvec) = \int_{\X}h(\zvec)P(\xvec,\dd \zvec).
\]
If $Ph(\xvec)$ is a continuous and bounded function on $\X$ for any $h(\xvec)$ continuous and bounded, we say that $P$ has the \emph{Feller property}.
\item $P$ acts on probability measures $\mu$ on $\X$ via
\[
P\mu(A) = \int_{\X} P(\xvec,A)\dd \mu(\xvec),
\]
for any Borel $A\in\cB(\X)$.
\end{itemize}

\begin{definition}
We say that a probability measure $\mu\in\cP(\X)$ is \emph{stationary} for the Markov transition kernel $P$ if it is a fixed point for its action on the space of probability measures, namely $\mu(A)=P\mu(A)$ for any $A\in\cB(\X)$.
\end{definition}

\begin{definition}
We say that a probability measure $\mu\in\cP(\X)$ is \emph{ergodic stationary} for the Markov transition kernel $P$ if all Borel sets $A\in\cB(\X)$ for which $P\chi_A(\xvec)=\chi_A(\xvec)$ $\mu-$a.e.\ satisfy $\mu(A)=0$ or $\mu(A)=1$.
\end{definition}

It is a classical result that if a Markov transition kernel $P$ has a unique stationary measure $\mu$, then $\mu$ is ergodic stationary, see for instance \cite{DaPratoZabczy96}*{Theorem 3.2.6}.

Next, we introduce a version of Harris theorem that is convenient of our purposes in this paper. In order to do so, let us define the following space of weighted bounded functions. Given any function $V:\X\to [1,\infty)$, we say that $\phi\in L^\infty_V(\X)$ if $\phi$ is measurable and
\[
\|\phi\|_{L^\infty_V} = \sup_{\xvec\in\X}\frac{|\phi(\xvec)|}{V(\xvec)}<\infty.
\]
For a proof of this particular version of this theorem we refer to \cite{BCZG23}*{Appendix A}. For a proof of more general and other versions we refer to \cites{HairerMattingly11,MeynTweedieGlynn09}.
\begin{theorem}[Harris theorem]\label{thm:harris}
Let $P$ be a Markov transition kernel on $\X$ with the Feller property that possesses the following attributes.
\begin{enumerate}[label=(\arabic*), ref=(\arabic*)]
\item (Topological irreducibility) For all $\xvec\in\X$ and all $A\subset \X$ open and nonempty, there exists $n\in\N$ such that $P^n(\xvec,A)>0$.
\item (Small set) There exists $n\in\N$, an open set $A\subset\X$, and a positive measure $\nu_n$ in $\X$ such that for all $\xvec\in A$ there holds
\[
P^n(\xvec,B) \geq \nu_n(B)
\]
for all $B\in\cB(\X)$.
\item (Aperiodicity) There exists $\xvec^\star\in\X$ such that for all $A\in\X$ open and containing $\xvec^\star$ there holds that $P(\xvec^\star,A)>0$.
\item\label{item:drift} (Lyapunov drift condition) There exists a function $V:\X\to[1,\infty)$ with the following property: there exist $\alpha\in (0,1)$, $\beta>0$, and a compact set $K\subset\X$ such that
\[
PV(\xvec) \leq \alpha V(\xvec)+\beta\chi_K(\xvec)
\]
for all $\xvec\in\X$.
\end{enumerate}
Then $P$ is $V-$uniformly geometrically ergodic, namely there exists a unique stationary measure $\mu$ for $P$, and there exist $C>0$ and $\gamma\in(0,1)$ such that
\[
\left| P^n\phi(\xvec)-\int_{\X}\phi(\xvec)\dd\mu(\xvec) \right| \leq CV(\xvec)\|\phi\|_{L^\infty_V}\gamma^n
\]
for all $\xvec\in\X$, all $\phi\in L^\infty_V(\X)$ and all $n\in\N$.
\end{theorem}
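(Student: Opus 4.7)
The plan is to follow the Hairer--Mattingly approach to Harris' theorem: upgrade the four hypotheses to a strict contraction of some iterate $P^M$ in a Wasserstein-type distance on probability measures tailored to $V$, and then invoke Banach's fixed point theorem. Concretely, I would equip $\X$ with the semimetric
\[
d_\theta(x,y) = \bigl(2 + \theta V(x) + \theta V(y)\bigr)\quad\text{if } x\neq y,\qquad d_\theta(x,x)=0,
\]
for a small parameter $\theta>0$ to be chosen, and consider the associated distance $W_\theta(\mu,\nu) = \inf_\Pi \int d_\theta\,\dd\Pi$ on probability measures of finite $V$-moment, with the infimum over couplings $\Pi$ of $\mu$ and $\nu$. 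This space is complete, and any two such measures lie at finite $W_\theta$-distance, so a contraction of $P^M$ will deliver a unique stationary law.

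The preparatory step is to extract a uniform minorization on a sublevel set of $V$ from the four hypotheses. Iterating the drift inequality (4) yields $P^n V(x) \le \alpha^n V(x) + \beta/(1-\alpha)$, so the sublevel sets $\{V \le R\}$ absorb the dynamics for $R$ large. Next, using the Feller property together with topological irreducibility (1) and aperiodicity (3), I would show that from every point of the compact set $K$ in hypothesis (4) the small set $A$ from hypothesis (2) is reached with positive probability in some finite number of steps, depending on the point; lower semicontinuity of $x\mapsto P^m(x,A)$ for open $A$, combined with compactness of $K$, produces via a finite cover a common $N\in\N$ and $\eta>0$ with $P^N(x,A) \ge \eta$ for every $x\in\{V\le R\}$ (enlarging $R$ if necessary so that $K\subset\{V\le R\}$). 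Together with the small set bound, this yields $P^{N+n_0}(x,\cdot) \ge \eta\,\nu_{n_0}$ uniformly on the sublevel set, where $n_0$ is the exponent in (2).

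With this uniform minorization in hand, the contraction $W_\theta(P^M\mu, P^M\nu)\le \gamma\, W_\theta(\mu,\nu)$ for $M=N+n_0$ is proved by splitting according to the coupling of the initial points. If both endpoints lie in $\{V\le R\}$, the minorization allows a coupling that merges the two trajectories after $M$ steps with probability at least $\eta$, so the expected value of $d_\theta$ is bounded by $(1-\eta)$ times its maximum $2+2\theta R$ on the set, plus a small drift contribution. If at least one endpoint lies outside $\{V\le R\}$, iterated drift gives $\theta\bigl(P^M V(x)+P^M V(y)\bigr) \le \alpha^M \theta(V(x)+V(y)) + 2\theta\beta/(1-\alpha)$, and choosing $\theta$ small and $R$ large makes the additive constants negligible compared to $2+\theta V(x)+\theta V(y)$, forcing strict contraction in both regimes. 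Banach's theorem then delivers the unique stationary $\mu$, and the $V$-weighted geometric bound is read off by applying the contraction $n$ times to an optimal coupling of $\delta_x P^n$ and $\mu$, combined with the pointwise estimate $|\phi(x')-\phi(y')|\le \theta^{-1}\|\phi\|_{L^\infty_V}\, d_\theta(x',y')$ valid for $x'\neq y'$. The main obstacle is the simultaneous calibration of $\theta$, $R$ and $M$: the contraction factor from the minorization scenario and the one from the drift scenario pull in opposite directions, and one must verify they can be made strictly less than $1$ at once, which is the balancing act characteristic of Harris-type arguments.
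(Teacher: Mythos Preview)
The paper does not supply its own proof of this theorem; it simply refers the reader to the appendix of Blumenthal--Coti Zelati--Gvalani for this precise formulation and to Hairer--Mattingly and Meyn--Tweedie for the general theory. Your weighted-total-variation contraction approach is precisely the Hairer--Mattingly route, so the overall architecture is sound and in line with what the paper invokes.

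There is, however, a real gap in your preparatory step. Compactness of $K$ together with Feller, irreducibility and aperiodicity does give $P^N(\xvec,A)\ge\eta$ uniformly for $\xvec\in K$; this part is fine. But you then assert the same bound for every $\xvec\in\{V\le R\}$, and that does not follow: the sublevel set $\{V\le R\}$ need be neither compact nor contained in $K$, so your finite-cover argument does not reach it. And the Hairer--Mattingly contraction genuinely requires minorization on a sublevel set of $V$, not merely on $K$ --- if one tries instead to split on ``both points in $K$'' versus ``not'', the drift estimate in the second regime only yields a contraction of $d_\theta$ under the unassumed restriction $\beta<2(1-\alpha)$. The missing ingredient is to use the drift condition itself to pass from $K$ to $\{V\le R\}$: since $PV\le\alpha V$ off $K$ and $V\ge 1$, a supermartingale bound gives $\Prob_{\xvec}(\tau_K>n)\le\alpha^n V(\xvec)\le\alpha^n R$, so the chain started anywhere in $\{V\le R\}$ hits $K$ by a fixed time with uniformly positive probability. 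Feeding this into your minorization on $K$, and using aperiodicity to absorb the random hitting time into a single deterministic exponent, then produces the uniform minorization on the sublevel set that your contraction step actually needs.
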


If $\X$ is compact, then the Lyapunov drift condition \ref{item:drift} is redundant, since it will always be satisfied for the trivial function $V(\xvec)=1$. This is due to the fact that $\|PV\|_{L^\infty}<\infty$ for any $V\in L^\infty$. Choosing $K=\X$ (compact), $\beta = \|PV\|_{L^\infty}=1$ and any $\alpha\in(0,1)$ suffices to obtain the result. In the case of $\X$ compact, we will say that a Markov--Feller transition kernel $P$ is \emph{uniformly geometrically ergodic} provided that it satisfies topological irreducibility, small sets and aperiodicity conditions from Harris theorem.

\subsection{Random dynamics and Lyapunov exponents}\label{subsection:random-dynamics}

The Markov processes introduced in the previous section are derived from continuous \emph{random dynamical systems} (RDS for short) with independent increments. For a reference about the theory of random dynamical systems see the monograph by Arnold \cite{Arnold98}. 

Consider $(\Omega_0,\cF_0,\Prob_0)$ a fixed probability space, and denote its elements by $\omega\in\Omega_0$. Following the notation of Section \ref{subsection:definition-flow} we introduce the composition 
\[
f_{\underline{\omega}}^n(\xvec) = f_{\omega_n}\circ \hdots \circ f_{\omega_1}(\xvec),
\]
where $\underline{\omega} = (\omega_1,\omega_2,\hdots)$ are elements of the product space $\Omega = \Omega_0^\N$. In a similar fashion, we define the product probability space $(\Omega,\cF,\Prob) = (\Omega_0,\cF_0,\Prob_0)^\N$. In addition, let $\theta:\Omega\to\Omega$ be the leftwards shift, defined by $\theta\underline{\omega}=(\omega_2,\omega_3,\hdots)$. Recall that the leftward shift is a measure preserving transformation on $(\Omega,\cF,\Prob)$, i.e.\ $\theta_\#\Prob = \Prob$, and therefore $(\Omega,\cF,\Prob,\theta)$ defines a measure-preserving dynamical system. We introduce the following definition.
\begin{definition}
    We say that $f_{\underline{\omega}}^n$ is a \emph{continuous RDS} over $(\Omega,\cF,\Prob,\theta)$ if there holds:
    \begin{enumerate}
        \item the mapping $f_\omega:\X\to\X$ is continuous for all $\omega\in\Omega_0$;
        \item the set $\{\omega\in\Omega_0 \mid f_\omega(\xvec)\in A\}$ is $\cF_0-$measurable for all $\xvec\in\X$ and all $A\in\cB(\X)$;
        \item $f^0_{\underline{\omega}}$ is the identity map in $\X$ for all $\underline{\omega}\in\Omega$;
        \item $f_{\underline{\omega}}^n$ satisfies the \emph{cocycle property}, namely
        \[
        f_{\underline{\omega}}^{n+m}(\xvec) = f_{\theta^m\underline{\omega}}^n\circ f_{\underline{\omega}}^m(\xvec)
        \]
        for all $n,m\in\N$ and all $\underline{\omega}\in\Omega$.
    \end{enumerate}
\end{definition}

Markov processes defined via continuous RDS $P(\xvec,A) = \Prob_0[f_\omega(\xvec)\in A]$ automatically have the Feller property due to the continuity of $f_\omega:\X\to\X$.

One key ingredient to study the dynamics and long-time behaviour of a system defined by a RDS $f_\omega$ is the matrix $D_{\xvec}f_{\underline{\omega}}^n$, that gives information about the change of position of a particle at each time step. This object can be recursively defined from the measurable mapping $D_{\xvec}f_\omega:\Omega_0\times\X\to GL_d(\R)$, where $d=\dim\X$, via the composition 
\[
D_{\xvec}f_{\underline{\omega}}^n = D_{f_{\underline{\omega}}^{n-1}(\xvec)}f_{\omega_n} \circ\hdots\circ D_{\xvec}f_{\omega_1}.
\]
Elements with this property are called \emph{linear cocycles}. The long-time behaviour of linear cocycles is described by the \emph{Multiplicative Ergodic Theorem} (MET), see \cite{Arnold98}*{Part II}, for linear cocycles that satisfy the integrability condition
\begin{equation}\label{eq:MET_integrability}
    \int_{\Omega_0\times\X} (\log^+|D_{\xvec}f_\omega| + \log^+|(D_{\xvec}f_\omega)^{-1}|) \dd\Prob_0(\omega)\dd\mu(\xvec) < \infty,
\end{equation}
where $\log^+ a = \max\{\log a,0\}$, and $\mu$ is a stationary measure of the Markov chain generated by the continuous RDS $f_\omega$. The MET states that there exists $r\in\{1,\hdots,d\}$, a sequence $\lambda_1(\omega,\xvec)>\hdots>\lambda_r(\omega,\xvec)$, and a filtration of subspaces
\[
\R^d = F_1(\omega,\xvec) \supsetneq \hdots \supsetneq F_r(\omega,\xvec) \supsetneq F_{r+1}(\omega,\xvec)=\{0\},
\]
with the property
\[
\lambda_i(\omega,\xvec) = \lim_{n\to\infty} \frac{1}{n}\log|D_{\xvec}f_{\underline{\omega}}^n \vvec|,
\]
for all $\vvec\in F_i(\omega,\xvec)\setminus F_{i+1}(\omega,\xvec)$. The subspaces $F_i(\omega,\xvec)$ are usualle referred to as \emph{Oseledets spaces}, the numbers $\lambda_i(\omega,\xvec)$ are called \emph{Lyapunov exponents}, and their multiplicities are defined by
\[
m_i = \dim F_i(\omega,\xvec)-\dim F_{i+1}(\omega,\xvec).
\]
In addition, the MET ensures that if the stationary measure $\mu$ of $f_\omega$ is ergodic, then the Lyapunov exponents $\lambda_i(\omega,\xvec) \equiv \lambda_i$ are constant for $\Prob_0-$a.a.\ $\omega\in\Omega_0$, and $\mu-$a.e.\ $\xvec\in\X$.

The Lyapunov exponents are key objetcs that play a crucial role in the proofs of Theorems \ref{thm:mixing} and \ref{thm:fast-dynamo}. An important property of these exponents is that its total sum is given by
\[
\lambda_\Sigma = m_1\lambda_1 + \hdots +m_r\lambda_r = \lim_{n\to\infty} \frac{1}{n}\log|\det (D_{\xvec}f_{\underline{\omega}}^n) |.
\]
This is of particular interest for RDS defined by incompressible flows because they are volume preserving and thus $|\det (D_{\xvec}f_{\underline{\omega}}^n)|=1$ for all $n\in\N$. With this in mind we see that the top Lyapunov exponent satisfies $\lambda_1\geq 0$, and $\lambda_1>0$ if $r\geq 2$.

A strictly positive Lyapunov exponent is sometimes defined as \emph{Lagrangian chaos}, see \cite{BBPS22a}. It is a feature that goes in favour of chaos and mixing for the passive scalar, and growth of the $L^2$ norm for the passive vector. To rule out the case in which the top Lyapunov exponent is zero we use the following criterion originally introduced by Furstenberg \cite{Furstenberg63}.

\begin{proposition}[Furstenberg's criterion]\label{prop:furstenberg}
    Assume that $f_\omega$ is a volume-preserving continuous RDS with ergodic stationary measure $\mu$ and that it satisfies \eqref{eq:MET_integrability}. If $\lambda_1=0$, then there exists a (measurable) family of measures $\{\nu_{\xvec}\mid \xvec\in\supp(\mu)\}$ on $\S^{d-1}$ such that
    \[
    (D_{\xvec}f_{\underline{\omega}}^n)_\# \nu_{\xvec} = \nu_{f_{\underline{\omega}}^n(\xvec)}
    \]
    for $\Prob-$a.a.\ $\underline{\omega}\in\Omega$, $\mu-$a.e.\ $\xvec\in\X$ and all $n\in\N$.
\end{proposition}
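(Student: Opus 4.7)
My approach would be to construct the family $\{\nu_{\xvec}\}$ as the fibre disintegration of a suitable invariant measure for the projective Markov chain $\hat P$ on $\X \times \S^{d-1}$, and then to upgrade the \emph{averaged} invariance that automatically falls out to the pathwise identity in the statement, using the hypothesis $\lambda_1=0$ in an essential way.

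\emph{Reduction and averaged invariance.} Volume-preservation of $f_\omega$ gives $\lambda_\Sigma=\sum_{i=1}^{r} m_i\lambda_i=0$; combined with $\lambda_1=0$ and the strict ordering $\lambda_1>\lambda_2>\cdots>\lambda_r$ this forces $r=1$ and all Lyapunov exponents to vanish, so the Oseledets filtration is trivial. Since $f_\omega$ is continuous, $\hat P$ is Feller on the compact space $\X\times\S^{d-1}$, and by Krylov--Bogolyubov applied to Ces\`aro averages of $\mu\otimes\lambda_{\S^{d-1}}$ (Haar measure on the sphere) I would obtain $\hat\mu\in\cP(\X\times\S^{d-1})$ with $\hat P\hat\mu=\hat\mu$ whose projection onto $\X$ is $\mu$. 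Disintegrating yields a measurable family $\{\nu_{\xvec}\}_{\xvec\in\supp\mu}$ with $\hat\mu=\int_\X \delta_{\xvec}\otimes\nu_{\xvec}\,d\mu(\xvec)$. Testing $\hat P$-invariance against tensor functions $g(\xvec)\phi(\vvec)$ and applying $P$-invariance of $\mu$ then yields the averaged identity
\[
\nu_{\xvec'} \;=\; \EX_{\Prob_0}\!\left[\,(D_{\xvec}f_\omega)_\#\nu_{\xvec}\;\big|\;f_\omega(\xvec)=\xvec'\,\right]
\quad\text{for $\mu$-a.e.\ }\xvec'.
\]

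\emph{From averaged to pathwise invariance.} To remove the conditional expectation I would pass to the two-sided extension $(\Omega^\Z,\F^\Z,\Prob^\Z,\theta)$, exploit invertibility of $f_\omega$ (automatic here since $f_\omega$ is a volume-preserving diffeomorphism), and for each fixed $\xvec$ follow the cocycle backwards: letting $\xvec_{-n}(\underline{\omega})$ denote the $n$-step preimage of $\xvec$ along $(\omega_{-n+1},\dots,\omega_0)$, set
\[
\nu^{n}_{\xvec}(\underline{\omega}) \;:=\; \big(D_{\xvec_{-n}}(f_{\omega_0}\circ\cdots\circ f_{\omega_{-n+1}})\big)_\#\nu_{\xvec_{-n}}.
\]
Iterating the averaged identity shows that $(\nu^{n}_{\xvec})_{n\ge 0}$ is a $\cP(\S^{d-1})$-valued reverse martingale with respect to the filtration generated by the past $(\omega_{-n+1},\dots,\omega_0)$. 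The hypothesis $\lambda_1=0$ prevents any exponential contraction or expansion in the cocycle, which keeps the martingale tight in $\cP(\S^{d-1})$, so by martingale convergence it admits a weak-$\ast$ almost sure limit $\tilde\nu_{\xvec}(\underline{\omega}^{-})$. Because the $\omega_i$ are IID and $\tilde\nu_{\xvec}$ is measurable with respect to the tail $\sigma$-algebra of $(\omega_i)_{i\le 0}$, Kolmogorov's 0--1 law forces $\tilde\nu_{\xvec}$ to be deterministic. The cocycle property of $f_{\underline{\omega}}^{n}$ then promotes this into the desired pathwise identity $(D_{\xvec}f_{\underline{\omega}}^{n})_\#\tilde\nu_{\xvec}=\tilde\nu_{f_{\underline{\omega}}^{n}(\xvec)}$ almost surely.

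\emph{Main obstacle.} The delicate step is the last one: controlling a measure-valued martingale on the compact Polish space $\cP(\S^{d-1})$ and, crucially, using $\lambda_1=0$ to rule out collapse of the limit onto a measurable field of Dirac masses. Were $\lambda_1>0$, the MET would force the pulled-back measures to concentrate onto the top Oseledets subspace, and the procedure would produce only a trivial invariant family of atomic measures; the equality $\lambda_1=0$ is precisely what makes the martingale limit non-degenerate and yields a genuine invariant family of probability measures on $\S^{d-1}$.
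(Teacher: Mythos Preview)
The paper does not prove this proposition itself; it records the statement and refers to the literature for a proof. So there is no argument in the paper to compare against, and what follows is an assessment of your sketch on its own merits.

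The construction of a $\hat P$-stationary measure $\hat\mu$ projecting to $\mu$, and its disintegration into $\{\nu_{\xvec}\}$, is correct. The averaged identity that genuinely follows from $\hat P\hat\mu=\hat\mu$ (after the change of variables $\xvec\mapsto f_\omega^{-1}(\xvec)$, using invariance of $\mu$) is
\[
\int_{\Omega_0}\big(D_{f_\omega^{-1}(\xvec)}f_\omega\big)_\#\,\nu_{f_\omega^{-1}(\xvec)}\,\dd\Prob_0(\omega)=\nu_{\xvec}\qquad\text{for }\mu\text{-a.e.\ }\xvec,
\]
and this is precisely what makes your sequence $\nu^n_{\xvec}$ a $\cP(\S^{d-1})$-valued martingale. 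Your conditional-expectation form $\EX[\,\cdot\mid f_\omega(\xvec)=\xvec']$ is ill-posed as written.

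The gap is in the upgrade to pathwise invariance. Your process $\nu^n_{\xvec}$ is adapted to the \emph{increasing} filtration $\cF_n=\sigma(\omega_{-n+1},\dots,\omega_0)$, so it is a \emph{forward} martingale, not a reverse one; its almost-sure limit $\tilde\nu_{\xvec}$ is $\cF_\infty$-measurable, not tail-measurable, and Kolmogorov's 0--1 law does not apply. Tightness is likewise a non-issue, since $\cP(\S^{d-1})$ is weak-$*$ compact and martingale convergence holds regardless of $\lambda_1$. As written, nothing in your argument actually uses $\lambda_1=0$, and indeed the same construction runs verbatim when $\lambda_1>0$: the limit $\tilde\nu_{\xvec}$ then collapses onto the Dirac mass at the random unstable direction at $\xvec$, which genuinely depends on the past noise and so fails the conclusion. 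The obstruction in that case is not that the limit is atomic---the statement permits atomic $\nu_{\xvec}$---but that it is random.

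What is missing is the step that forces $\tilde\nu_{\xvec}=\nu_{\xvec}$ almost surely, i.e.\ that the martingale increments vanish. In the references the paper cites this goes through an invariance principle of Ledrappier type: a Furstenberg-type formula expresses a relative entropy of the fibre disintegration in terms of the Lyapunov spectrum, and $\lambda_1=0$ (together with $\lambda_\Sigma=0$ from volume preservation) forces this entropy to vanish, yielding $(D_{\xvec}f_\omega)_\#\nu_{\xvec}=\nu_{f_\omega(\xvec)}$ for $\Prob_0$-a.e.\ $\omega$ directly. Your sketch assembles the right objects but substitutes the 0--1 law for this entropy step, and that substitution does not close the argument.
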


The version of Furstenberg's criterion here presented can be found in \cites{BBPS22a,BCZG23}. In the following section we will introduce sufficient conditions to rule out the case $\lambda_1=0$, which together with a suitable application of Harris theorem will yield the results in Theorems \ref{thm:mixing} and \ref{thm:fast-dynamo}.

\subsection{Preliminary results}

In \cites{BCZG23,CIS24+} the authors present a collection of sufficient conditions to check the assumptions in Harris Theorem \ref{thm:harris}. For the next results we will look at $f_\omega$ as a continuous random dynamical system on $\Omega_0\times\X\times\N$, and we will assume that it satisfies the following basic hypotheses.
\begin{enumerate} [label=(H\arabic*), ref=(H\arabic*)]
\item\label{item:H1} The IID noise parameters $\omega$ live in a probability space $\Omega_0=\R^6$ with probability measure $\dd\Prob_0(\omega) = \rho_0(\omega)\dd \omega$, where $\dd \omega$ represents the Lebesgue measure on $\R^6$, and $\rho_0(\omega)$ is a density. Additionally, the mapping $(\omega,\xvec)\mapsto f_\omega(\xvec)$ is $C^2(\Omega_0\times\X)$. 
\item\label{item:H2} There exist a constant $C_0>0$ such that  $\Prob_0-$a.s.
\[
|(D_{\xvec} f_\omega)^{-1}|\leq C_0, \quad |D_{\xvec} f_\omega|\leq C_0, \quad \text{and}\quad \|f_\omega\|_{C^2}\leq C_0.
\]
\item\label{item:H3} $f_\omega$ preserves the Lebesgue measure on $\X$ $\Prob_0-$a.s.
\end{enumerate}

The next result asserts that if the two-point chain generated by the inviscid flow is geometrically ergodic, then $u$ mixes almost surely any $H^s$ initial datum of the advection-diffusion equation \eqref{eq:adv-diff} exponentially fast, provided that the diffusivity $\kappa\geq 0$ is sufficiently small. This result for the case $\kappa=0$ was originally proved in \cite{BCZG23}*{Proposition 4.6} for a time continuous flow, and in \cite{BBPS22}*{Section 7} for a discrete framework more similar to the one presented here. Recently this mixing result has been extended to the more general case including diffusion in \cite{CIS24+}*{Lemma 3.3}.

\begin{proposition}\label{prop:discrete-mixing}
Let $f_\omega^\kappa$ be a continuous RDS on $\T^3$ defined as a solution to the SDE \eqref{eq:sde-diffusion} with $\kappa\geq 0$. Let the two-point process of $f_\omega^0$ be $V-$geometrically ergodic with $V\in L^1(\T^3\times\T^3)$. Then for any $q,s>0$ and any $\kappa\geq 0$ sufficiently small, there exist a random constant $\widehat{C}_{\underline{\omega},\kappa}\geq 1$, and a deterministic, $\kappa-$independent constant $\lambda_s>0$ such that for all mean free $g,h\in H^s(\T^3)$, there holds
\[
\left| \int_{\T^3} g(\xvec)h(f_{\underline{\omega}^n}^\kappa(\xvec))\dd \xvec \right| \leq \widehat{C}_{\underline{\omega},\kappa} \|g\|_{H^s} \|h\|_{H^s} \e^{-\lambda_s n} \quad \text{almost surely for all } n\in\N.
\]
Moreover, there exists a $\kappa-$independent constant $\overline{C}_q>0$ such that $\EX| \widehat{C}_{\cdot,\kappa}|^q\leq \overline{C}_q$.
\end{proposition}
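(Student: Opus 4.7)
The plan is to reduce the almost-sure correlation bound to a second-moment estimate controlled by the $V$-geometric ergodicity of the two-point chain, and then upgrade this to a pathwise statement via a Chebyshev--Borel--Cantelli argument, broadly following the template of \cite{BCZG23} and its diffusive refinement \cite{CIS24+}.

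I would begin by expanding $g,h$ in Fourier series; a Cauchy--Schwarz with weights $|\kvec|^{-s}|\mathbf{l}|^{-s}$ reduces the claim to a uniform-in-$(\kvec,\mathbf{l})$ exponential decay estimate for the elementary correlators
\[
C_n^\kappa(\kvec,\mathbf{l}):=\int_{\T^3} e^{i\kvec\cdot\xvec + i\mathbf{l}\cdot f_{\underline{\omega}^n}^\kappa(\xvec)}\,\dd\xvec,\qquad \kvec,\mathbf{l}\in\Z^3_0,
\]
incurring only a polynomial loss in $|\kvec|,|\mathbf{l}|$ that can be absorbed into $\|g\|_{H^s}\|h\|_{H^s}$ by taking $s$ slightly larger and interpolating back. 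Squaring and taking the expectation in all the randomness produces the key identity
\[
\EX\bigl|C_n^\kappa(\kvec,\mathbf{l})\bigr|^2 = \int_{\T^3\times\T^3} e^{i\kvec\cdot(\xvec^1-\xvec^2)}\,\bigl(P^{(2),\kappa}\bigr)^n\psi_{\mathbf{l}}(\xvec^1,\xvec^2)\,\dd\xvec^1\,\dd\xvec^2,
\]
with $\psi_{\mathbf{l}}(\xvec^1,\xvec^2):=e^{i\mathbf{l}\cdot(\xvec^1-\xvec^2)}$ and $P^{(2),\kappa}$ the two-point transition kernel of the diffusive flow. The aim is then to show uniform exponential decay in $n$ of the right-hand side.

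For $\kappa=0$, incompressibility together with the topological irreducibility of $P^{(2),0}$ to be verified in Section~\ref{section:proof-mixing} forces its stationary measure $\mu^{(2)}$ to be Lebesgue on $(\T^3)^2\setminus\Delta$; consequently $\psi_{\mathbf{l}}$ is mean-zero against $\mu^{(2)}$ for every $\mathbf{l}\neq 0$, and since $\psi_{\mathbf{l}}$ is uniformly bounded, the $V$-geometric ergodicity hypothesis with $V\in L^1$ delivers $\EX|C_n^0|^2\lesssim \|V\|_{L^1}\,\gamma^n$, uniformly in $(\kvec,\mathbf{l})$. For small $\kappa>0$, following \cite{CIS24+}, I would show that $P^{(2),\kappa}$ is a sufficiently small perturbation of $P^{(2),0}$ in an appropriate weighted operator topology that the spectral gap, and the same $L^2$ decay, persist with a $\kappa$-independent constant and rate. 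To upgrade this second-moment bound to the almost-sure statement with a $q$-integrable random constant, I would fix a slightly smaller rate $\lambda_s'<\lambda_s$, apply Chebyshev at the $L^{q'}$ level for some $q'>q$ to the random variables $e^{\lambda_s' n}|C_n^\kappa|$, and conclude by Borel--Cantelli, setting $\widehat{C}_{\underline{\omega},\kappa}$ to be the resulting almost-sure supremum. The higher-moment input requires $V\in L^{q'/2}$, which can be arranged by tuning the Lyapunov function in Section~\ref{section:proof-mixing}.

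The main obstacle I expect is the $\kappa$-uniformity of the perturbative step: one must verify both that the diffusive two-point kernel inherits a spectral gap from its inviscid counterpart with a loss independent of $\kappa$, and that the Lyapunov function controlling $P^{(2),0}$ continues to serve $P^{(2),\kappa}$ with $\kappa$-independent constants. Everything else---the Fourier reduction, the $L^2$-to-almost-sure upgrade, and the moment bound on $\widehat{C}_{\underline{\omega},\kappa}$---is essentially bookkeeping once the uniform two-point ergodicity is in hand.
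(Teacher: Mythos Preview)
The paper does not prove this proposition; it is quoted as a black-box input, with the inviscid case attributed to \cite{BCZG23}*{Proposition 4.6} and \cite{BBPS22}*{Section 7}, and the diffusive extension to \cite{CIS24+}*{Lemma 3.3}. Your sketch faithfully reproduces the architecture of those cited arguments---Fourier reduction to elementary correlators, squaring to land on the two-point chain, $V$-geometric ergodicity for the second-moment decay, perturbative stability of the spectral gap in $\kappa$, and a Chebyshev--Borel--Cantelli upgrade to an almost-sure bound with controlled moments---so there is nothing in the present paper to compare against beyond noting that your outline matches the cited sources, including your identification of the $\kappa$-uniform perturbation step as the main technical burden.
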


Proposition \ref{prop:discrete-mixing} is the key piece to obtain Theorem \ref{thm:mixing}. It states that we only need to prove geometric ergodicity for the two-point process of the inviscid problem $\kappa=0$ to obtain mixing uniformly in $\kappa$. Therefore from now on we will drop the superscript $\kappa$ in $f^\kappa_\omega$ and we will address exclusively the inviscid case: $f_\omega$ denotes the flow given by the vector field \eqref{eq:random-abc-vectorfield} evaluated at discrete times.

Since the goal is to prove ergodicity of the two-point chain $P^{(2)}$, we need to make sure that assumptions in Harris theorem are satisfied. The main obstacle will be to show the Lyapunov-drift condition, which is required because $P^{(2)}$ is defined in a non-compact space $\X=(\T^3\times\T^3)\setminus\Delta$, however irreducibility, small sets and aperiodicity must also be checked. Topological irreducibility for the case of the random ABC vector field will be a byproduct of proving controllability. Here we present sufficient conditions for small sets and aperiodicity. The first result, about the existence of a small set, is proved in \cite{BCZG23}*{Proposition 3.1}.

\begin{lemma}[Sufficient condition for small sets]\label{prop:small-set}
Assume that $f_\omega$ satisfies \ref{item:H1}. Assume that there exist $n\in\N$, $\underline{\omega}^n_\star\in\Omega_0^n$, and $\xvec^\star\in\X$ that satisfy:
\begin{enumerate}
\item $\exists \varepsilon,\delta>0$ so that $\rho_0(\underline{\omega}^n)\geq \delta$ for all $\underline{\omega}^n\in\Omega_0^n$ with $|\underline{\omega}^n-\underline{\omega}_\star^n|\leq \varepsilon$;
\item $\Phi_{\xvec^\star}(\underline{\omega}^n) = f_{\underline{\omega}^n}(\xvec)$ is a submersion at $\underline{\omega}^n=\underline{\omega}^n_\star$.
\end{enumerate}
Then there exist $A\subset\X$ open, and $\nu_n$ positive and absolutely continuous with respect to Lebesgue such that for all $\xvec\in A$ there holds
\[
P^n(\xvec,B) \geq \nu_n(B)
\]
for all $B\in\cB(\X)$.
\end{lemma}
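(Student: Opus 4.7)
The plan is to exploit the submersion hypothesis at $\underline\omega^n_\star$ to produce, via a local change of coordinates in noise space, a positive absolutely continuous measure $\nu_n$ that lower-bounds $P^n(\xvec,\cdot)$ uniformly for $\xvec$ in an open neighborhood of $\xvec^\star$.

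First, since $f_\omega\in C^2$ by \ref{item:H1}, the map $(\xvec,\underline\omega^n)\mapsto\Phi_\xvec(\underline\omega^n)=f_{\underline\omega^n}(\xvec)$ is jointly $C^2$. Because $D_{\underline\omega^n}\Phi_{\xvec^\star}$ is surjective at $\underline\omega^n_\star$, the local normal form for submersions (a consequence of the implicit function theorem) produces open neighborhoods $U\subset\Omega_0^n$ of $\underline\omega^n_\star$ and $V\times W\subset\R^d\times\R^{6n-d}$ (with $d=\dim\X$), together with a $C^2$ diffeomorphism $\psi:U\to V\times W$ such that $\Phi_{\xvec^\star}\circ\psi^{-1}(y,z)=y$. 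After shrinking $U$ if necessary, I can assume $U\subset\{|\underline\omega^n-\underline\omega^n_\star|\leq\varepsilon\}$, so that hypothesis (1) yields the uniform lower bound $\rho_0\geq\delta$ on $U$.

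Next I would upgrade this to a statement uniform in $\xvec$. By joint $C^2$ dependence of $\Phi_\xvec$ on $\xvec$, the differential $D_{\underline\omega^n}\Phi_\xvec$ retains full rank at $\underline\omega^n_\star$ for $\xvec$ in an open neighborhood $A$ of $\xvec^\star$, and the parametric version of the submersion normal form yields a family of $C^2$ diffeomorphisms $\psi_\xvec:U'\to V_\xvec\times W_\xvec$ on a common subneighborhood $U'\subset U$, depending continuously on $\xvec\in A$, and satisfying $\Phi_\xvec\circ\psi_\xvec^{-1}(y,z)=y$. Continuity in $\xvec$ allows me to shrink $A$ and $U'$ so that a nonempty open ball $V_0\subset\bigcap_{\xvec\in A}V_\xvec$, a nonempty open $W_0\subset\bigcap_{\xvec\in A}W_\xvec$, and a uniform Jacobian bound $|\det D\psi_\xvec^{-1}|\geq c>0$ on $V_0\times W_0$ can all be arranged simultaneously.

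Finally, for any $\xvec\in A$ and $B\in\cB(\X)$, restricting the integral defining $P^n$ to $\psi_\xvec^{-1}(V_0\times W_0)\subset U$ and changing variables yields
\begin{equation*}
P^n(\xvec,B)=\int_{\Omega_0^n}\chi_B(\Phi_\xvec(\underline\omega^n))\rho_0(\underline\omega^n)\dd\underline\omega^n\geq \delta c\int_{V_0\times W_0}\chi_B(y)\dd y\dd z=\delta c\,\leb(W_0)\,\leb(V_0\cap B).
\end{equation*}
Setting $\nu_n(B):=\delta c\,\leb(W_0)\,\leb(V_0\cap B)$ produces a positive measure on $\X$ absolutely continuous with respect to Lebesgue and independent of $\xvec\in A$, which is exactly the desired minorization. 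The main obstacle is the uniformity in $\xvec$ in the second step: one must secure a single normal-form neighborhood $V_0\times W_0$ and a single Jacobian lower bound valid for every $\xvec\in A$. This reduces to an application of the implicit function theorem with $C^2$ dependence on the parameter $\xvec$, which is guaranteed by \ref{item:H1}.
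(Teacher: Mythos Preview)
Your argument is correct and follows the standard route: the submersion hypothesis yields normal-form coordinates in noise space, assumption (1) supplies a uniform lower bound on the density $\rho_0$, and a change of variables in the integral defining $P^n(\xvec,\cdot)$ produces the minorizing measure $\nu_n$. The paper does not give its own proof of this lemma; it simply cites \cite{BCZG23}*{Proposition 3.1}, where essentially the same submersion-plus-change-of-variables argument is carried out. Your treatment of the one genuinely delicate point---securing a common normal-form chart $V_0\times W_0$ and a uniform Jacobian lower bound $|\det D\psi_\xvec^{-1}|\geq c$ valid for all $\xvec$ in a neighborhood $A$ of $\xvec^\star$---is correctly grounded in the joint $C^2$ regularity of $(\xvec,\underline\omega^n)\mapsto\Phi_\xvec(\underline\omega^n)$ guaranteed by \ref{item:H1}, via the parametric implicit function theorem.
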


The next lemma with a sufficient condition for aperiodicity is taken from \cite{BCZG23}*{Lemma 3.2}.

\begin{lemma}[Sufficient condition for aperiodicity]\label{prop:aperiodicity}
Assume that $f_\omega$ satisfies \ref{item:H1}. Assume that there exists $\omega_\star\in\Omega_0$ for which $f_\omega$ has a fixed point $\xvec^\star\in\X$, i.e.\ $f_{\omega_\star}(\xvec^\star)=\xvec^\star$. Then for any $A\subset\X$ open containing $\xvec^\star$, there holds that $P(\xvec^\star,A)>0$.
\end{lemma}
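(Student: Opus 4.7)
The plan rests on two ingredients: the continuity of $\omega\mapsto f_\omega(\xvec^\star)$ supplied by \ref{item:H1}, and the absolute continuity of the noise law $\Prob_0$ near the specific parameter $\omega_\star$.

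Concretely, I would fix any open set $A\subset\X$ containing $\xvec^\star$. Because $f_{\omega_\star}(\xvec^\star)=\xvec^\star\in A$ and the mapping $\omega\mapsto f_\omega(\xvec^\star)$ is (in fact $C^2$) continuous from $\Omega_0$ to $\X$ by \ref{item:H1}, the preimage
\[
U := \{\omega\in\Omega_0 \, :\, f_\omega(\xvec^\star)\in A\}
\]
is an open neighborhood of $\omega_\star$ in $\Omega_0$. Then, by the very definition of the one-point Markov kernel,
\[
P(\xvec^\star,A) \;=\; \Prob_0\bigl[f_\omega(\xvec^\star)\in A\bigr] \;=\; \int_U \rho_0(\omega)\,\dd\omega,
\]
so the conclusion reduces to showing that this integral is strictly positive. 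In the concrete setting of this paper, $\rho_0$ is the uniform density on $[-U,U]^3\times[0,2\pi)^3$ and the relevant deterministic realisations $\omega_\star$ will be taken in the interior of that box; more generally, under the (implicit) assumption that $\omega_\star$ lies in the interior of the support of $\rho_0$, there is a ball $B_r(\omega_\star)\subset U$ on which $\rho_0$ is bounded below by a positive constant, so the integral is $\geq c\, \leb(B_r(\omega_\star))>0$.

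There is no real obstacle here: the argument is a soft combination of continuity in the parameter with the absolute continuity of $\Prob_0$. The utility of the lemma is purely organisational: it reduces the verification of the aperiodicity hypothesis in Harris Theorem \ref{thm:harris} to the construction of a single deterministic fixed point of $f_{\omega_\star}$, a much more concrete task that can be carried out explicitly in later sections for each of the one-point, projective and two-point chains.
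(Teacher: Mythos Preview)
Your argument is correct and is precisely the standard one: continuity of $\omega\mapsto f_\omega(\xvec^\star)$ makes the preimage of $A$ an open neighbourhood of $\omega_\star$, and absolute continuity of $\Prob_0$ with $\omega_\star$ in the support of $\rho_0$ gives positive mass. The paper does not supply its own proof of this lemma but simply quotes it from \cite{BCZG23}*{Lemma 3.2}, so there is nothing further to compare; your observation that one implicitly needs $\omega_\star$ in the interior of $\supp\rho_0$ is a fair point and is indeed satisfied in the application (Remark~\ref{remark:aperiodicity}).
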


\begin{remark}\label{remark:aperiodicity}
    Observe that aperiodicity is always guaranteed for the random ABC flow \eqref{eq:ABCflow-solution} provided that we allow the coefficients $(\A,\B,\C)$ take the trivial value $(0,0,0)$. This choice yields a fixed point not only for the standard one-point process, but also for the projective and two-point processes.
\end{remark}

Irreducibility, small sets and aperiodicity are sufficient conditions for exponential ergodicity of a Markov chain that is defined in a compact space. This will be the case of the one-point $P$ and periodic $\hat{P}$ chains, but not the case of the two-point chain $P^{(2)}$, that presents the problem of the diagonal. In order to deal with the two-point chain we need to address the Lyapunov-drift condition. The following result from \cite{BCZG23}*{Section 4.2} gives a suficient condition for the Lyapunov-drift condition of $P^{(2)}$ to hold true.

\begin{lemma}[Lyapunov-drift condition]\label{lemma:lyapunov-drif-condition}
    Assume that the one-point and the projective chains are geometrically ergodic, and further that the top Lyapunov exponent is positive. There exist $p>0$ sufficiently small, a function $V\in L^1(\T^3\times\T^3)$, $V\geq 1$ with the property
    \[
    \dist_\X(\xvec^1,\xvec^2)^{-p} \lesssim V(\xvec^1,\xvec^2) \lesssim \dist_\X(\xvec^1,\xvec^2)^{-p},
    \]
    and $\varepsilon\in (0,1)$ such that there holds $P^{(2)}V(\xvec^1,\xvec^2) < \varepsilon V(\xvec^1,\xvec^2)$ for all $(\xvec^1,\xvec^2)$ in the complementary of a compact set $K\subsetneq (\T^3\times\T^3)\setminus \Delta$.
\end{lemma}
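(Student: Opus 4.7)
\medskip

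\noindent\textbf{Proof proposal.}
The natural candidate is $V(\xvec^1,\xvec^2) \asymp \dist_\X(\xvec^1,\xvec^2)^{-p}$ for a small $p>0$ to be chosen. Since $\T^3\times\T^3$ is compact and the only non-compact ``end'' of $\X=(\T^3\times\T^3)\setminus\Delta$ is the diagonal, all the content of the drift inequality lies in a neighborhood of $\Delta$. I would therefore first reduce the problem to a near-diagonal expansion. Writing $\xvec^2=\xvec^1+\eta\vvec$ with $\vvec\in\S^2$ and $\eta=|\xvec^2-\xvec^1|$ small, the $C^2$ control in \ref{item:H2} yields, $\Prob_0$-a.s.,
\[
\dist_\X(f_\omega(\xvec^1),f_\omega(\xvec^2)) = |D_{\xvec^1}f_\omega \vvec|\,\eta\,\bigl(1+O(\eta)\bigr),
\]
with a remainder bounded uniformly in $\omega$. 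Raising to the power $-p$ and taking expectation gives
\[
P^{(2)}\bigl(\dist_\X^{-p}\bigr)(\xvec^1,\xvec^2) \;=\; \eta^{-p}\,\EX_{\Prob_0}\!\bigl[|D_{\xvec^1}f_\omega \vvec|^{-p}\bigr]\,\bigl(1+O(\eta^{p\wedge 1})\bigr),
\]
so the whole question reduces to contraction of the quantity $h_{n,p}(\xvec,\vvec):=\EX_\Prob\bigl[|D_\xvec f_{\underline\omega}^{\,n}\vvec|^{-p}\bigr]$ for a suitable $n$.

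The second step is to exploit the positivity of the top Lyapunov exponent $\lambda_1>0$ together with the geometric ergodicity of the projective chain $\hat P$. By the MET, for the unique stationary measure $\hat\mu$ of $\hat P$ one has
\[
\int_{\T^3\times\S^2}\log|D_\xvec f_\omega \vvec|\,\dd\hat\mu(\xvec,\vvec)\,\dd\Prob_0(\omega)\;=\;\lambda_1\;>\;0,
\]
and by the $V$-uniform ergodicity of $\hat P$ this average is attained, up to an exponentially small error in $n$, by the Birkhoff sum on the orbit of any initial $(\xvec,\vvec)$. Consequently there exist $n_0\in\N$ and $c>0$, independent of $(\xvec,\vvec)$, such that
\[
\EX_\Prob\!\bigl[\log|D_\xvec f_{\underline\omega}^{\,n_0}\vvec|\bigr]\;\geq\;c\,n_0 \qquad\text{for every }(\xvec,\vvec)\in\T^3\times\S^2.
\]
Since $\ref{item:H2}$ gives the a.s.\ bound $|\log|D_\xvec f_{\underline\omega}^{\,n_0}\vvec||\leq n_0\log C_0$, the elementary estimate $\e^{-ps}\leq 1-ps+\tfrac12 p^2 s^2\,\e^{p|s|}$ applied with $s=\log|D_\xvec f_{\underline\omega}^{\,n_0}\vvec|$ yields
\[
h_{n_0,p}(\xvec,\vvec)\;\leq\;1-c\,p\,n_0+C p^2 n_0^2\,C_0^{p n_0}\;\leq\;1-\tfrac{c\,p\,n_0}{2}
\]
provided $p$ is chosen small enough (with $n_0$ fixed first). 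Combined with the near-diagonal expansion, this gives an $n_0$-step drift
\[
P^{(2),\,n_0}\bigl(\dist_\X^{-p}\bigr)(\xvec^1,\xvec^2)\;\leq\;(1-\tfrac{c\,p\,n_0}{4})\,\dist_\X(\xvec^1,\xvec^2)^{-p}
\]
in a sufficiently small neighborhood of $\Delta$.

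The third step is to upgrade this multi-step estimate to the one-step inequality stated in the lemma. I would take
\[
V(\xvec^1,\xvec^2)\;:=\;\sum_{k=0}^{n_0-1}\alpha^{-k}\,P^{(2),\,k}\bigl(\dist_\X^{-p}\bigr)(\xvec^1,\xvec^2)
\]
with $\alpha\in(1,(1-\tfrac{cpn_0}{4})^{-1/n_0})$, which telescopes to give $P^{(2)}V\leq \alpha V-(\alpha-(1-\tfrac{cpn_0}{4})\alpha^{-(n_0-1)})\dist_\X^{-p}$; choosing $\alpha$ close to $1$ and using $\dist_\X^{-p}\asymp V$ near the diagonal delivers $P^{(2)}V\leq\varepsilon V$ for some $\varepsilon\in(0,1)$, at the cost of shrinking the neighborhood of $\Delta$. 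The two-sided bound $\dist_\X^{-p}\lesssim V\lesssim \dist_\X^{-p}$ is immediate from \ref{item:H2}, which guarantees that one step of $P^{(2)}$ moves points only by a bounded bi-Lipschitz factor. Integrability $V\in L^1$ follows from $p<3$.

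Away from the diagonal the function $V$ is bounded from above and below by positive constants, and $P^{(2)}V$ is likewise bounded, so the desired strict inequality may fail on a compact set $K\subsetneq\X$ contained in the complement of a fixed neighborhood of $\Delta$; this is precisely the set excluded in the statement. The main technical obstacle I anticipate is the uniformity of the projective-chain average: one must show that the Birkhoff average of $\log|D_\xvec f_\omega \vvec|$ converges to $\lambda_1$ at a geometric rate that does \emph{not} depend on $(\xvec,\vvec)$. This is where $V$-uniform geometric ergodicity of $\hat P$ (hypothesized in the statement) is used in an essential way, applied to the continuous, bounded test function $\log|D_\xvec f_\omega \vvec|$ and combined with Furstenberg's criterion (Proposition \ref{prop:furstenberg}) to rule out $\lambda_1=0$; the remaining bookkeeping on the size of $p$, $n_0$ and the near-diagonal neighborhood is routine.
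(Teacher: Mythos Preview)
The paper does not prove this lemma: it is stated with an explicit reference to \cite{BCZG23}*{Section 4.2} and used as a black box. Your proposal is therefore not competing against any argument in the present paper, and what you have written is in fact a faithful sketch of the proof in the cited reference: linearise the two-point dynamics near the diagonal to reduce to a negative-moment bound for $|D_\xvec f_{\underline\omega}^{\,n}\vvec|$, use geometric ergodicity of the projective chain to make the expected growth rate uniform in $(\xvec,\vvec)$, Taylor-expand $\e^{-ps}$ for small $p$, and then pass from the $n_0$-step contraction to a one-step drift by the standard telescoping construction.

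One point worth tightening is the identification
\[
\int_{\T^3\times\S^2}\EX_{\Prob_0}\bigl[\log|D_\xvec f_\omega \vvec|\bigr]\,\dd\hat\mu(\xvec,\vvec)=\lambda_1.
\]
You invoke ``the MET'' for this, but the MET alone only tells you that the Furstenberg integral with respect to \emph{some} ergodic projective stationary measure equals \emph{some} Lyapunov exponent. The equality with $\lambda_1$ follows from combining two facts: (i) by geometric ergodicity of $\hat P$, the Ces\`aro averages $\tfrac1n\sum_{k<n}\hat P^k\psi$ of $\psi(\xvec,\vvec)=\EX_{\Prob_0}[\log|D_\xvec f_\omega\vvec|]$ converge \emph{uniformly} to the integral above; (ii) for Lebesgue-a.e.\ $(\xvec,\vvec)$ the same averages equal $\tfrac1n\EX_\Prob[\log|D_\xvec f_{\underline\omega}^{\,n}\vvec|]\to\lambda_1$ by the MET and dominated convergence (using \ref{item:H2}). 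Matching the uniform and almost-everywhere limits gives the claimed identity. With this clarification your argument is complete and matches the intended proof.
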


All in all there is only one missing ingredient to apply this lemma: the positivity of the Lyapunov exponent, that we discuss in the next section.

To prove that the top Lyapunov exponent is positive we need first to make sure that the assumptions for the MET from Section \ref{section:lyapunov} are satisfied. If the one-point chain satisfies Harris theorem, in particular has an ergodic stationary measure $\mu$. Moreover, integrability condition \eqref{eq:MET_integrability} is automatically satisfied by imposing hypothesis \ref{item:H2}.

The following results arises as a sufficieint condition for the top Lyapunov exponent to be positive, and it is derived from Furstenberg's criterion in Proposition \ref{prop:furstenberg}. For simplicity, for this result we will work in the frame of the random ABC vector field specifically, namely $\X=\T^3$.

As for Lemma \ref{prop:small-set}, given $n\in\N$ and $\xvec\in\T^3$, we define $\Phi_{\xvec}:\Omega_0^n\to\T^3$ by $\Phi_{\xvec}(\underline{\omega}^n) = f_{\underline{\omega}^n}(\xvec)$. Additionally we define another mapping $\Psi_{\xvec}:\Omega_0^n\to SL_3(\R)$ as $\Psi_{\xvec}(\underline{\omega}^n) = D_{\xvec}\Phi_{\xvec}$. Notice that $\Psi_{\xvec}$ is $C^1$, and moreover since the flow $f_\omega$ is volume-preserving, then $|\det(D_{\xvec}\Phi_{\xvec})|=|\det(D_{\xvec}f_{\underline{\omega}^n})|=1$ for all $n$, $\omega$ and $\xvec$, and hence $\Psi_{\xvec}(\underline{\omega}^n) \in SL_3(\R)$.

\begin{proposition}\label{prop:lyapunov}
    Under assumptions \ref{item:H1}--\ref{item:H2}, let $f_\omega$ be a continuous RDS that is geometrically ergodic with stationary ergodic measure $\mu$ in $\T^3$. In addition, assume that there exist $n\in\N$, $\underline{\omega}^n_\star\in\Omega_0^n$, and $\xvec^\star\in\X$ that satisfy:
    \begin{enumerate}[label=(L\arabic*), ref=(L\arabic*)]
        \item\label{item:L1} $\exists \varepsilon,\delta>0$ so that $\rho_0(\underline{\omega}^n)\geq \delta$ for all $\underline{\omega}^n\in\Omega_0^n$ with $|\underline{\omega}^n-\underline{\omega}_\star^n|\leq \varepsilon$;
        \item\label{item:L2} $\Phi_{\xvec^\star}(\underline{\omega}^n) = f_{\underline{\omega}^n}(\xvec^\star)$ is a submersion at $\underline{\omega}^n=\underline{\omega}^n_\star$;
        \item\label{item:L3} the restriction of the linear mapping $D_{\underline{\omega}^n_\star}\Psi_{\xvec^\star}$ to $\ker (D_{\underline{\omega}^n_\star}\Phi_{\xvec^\star})$
        \[
        D_{\underline{\omega}^n_\star}\Psi_{\xvec^\star}:  \ker (D_{\underline{\omega}^n_\star}\Phi_{\xvec^\star})  \to  T_{\Psi_{\xvec^\star}(\underline{\omega}^n)}SL_3(\R)
        \]
    is surjective.
    \end{enumerate}
    Then $\lambda_1>0$.
\end{proposition}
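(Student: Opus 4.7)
The plan is to argue by contradiction using Furstenberg's criterion (Proposition \ref{prop:furstenberg}). Suppose $\lambda_1 = 0$. Assumption \ref{item:H2} ensures the integrability condition \eqref{eq:MET_integrability} holds trivially, and the geometric ergodicity hypothesis supplies the ergodic stationary measure $\mu$ on $\T^3$. Furstenberg's criterion then produces a measurable family of probability measures $\{\nu_\xvec\}_{\xvec\in\supp\mu}$ on $\S^2$ satisfying the cocycle invariance $(D_\xvec f_{\underline{\omega}}^n)_\# \nu_\xvec = \nu_{f_{\underline{\omega}}^n(\xvec)}$ for $(\mu\otimes\Prob)$-a.e.\ $(\xvec,\underline{\omega})$. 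Since the Markov chain is Harris-geometrically ergodic with topological irreducibility, one has $\supp(\mu) = \T^3$.

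The geometric heart of the argument is to combine \ref{item:L2} and \ref{item:L3} into a joint submersion statement. Writing $F_{\xvec^\star} := (\Phi_{\xvec^\star}, \Psi_{\xvec^\star}) \colon \Omega_0^n \to \T^3 \times SL_3(\R)$, for any target $(a,b)\in T_{f_{\underline{\omega}^n_\star}(\xvec^\star)}\T^3 \oplus T_{\Psi_{\xvec^\star}(\underline{\omega}^n_\star)}SL_3(\R)$, surjectivity of $D\Phi_{\xvec^\star}$ (condition \ref{item:L2}) yields $v_0$ with $D\Phi_{\xvec^\star}(v_0) = a$, and \ref{item:L3} supplies $w \in \ker D\Phi_{\xvec^\star}$ with $D\Psi_{\xvec^\star}(w) = b - D\Psi_{\xvec^\star}(v_0)$, so that $v_0 + w$ is a preimage of $(a,b)$. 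Hence $F_{\xvec^\star}$ is a submersion at $\underline{\omega}^n_\star$. Combined with the positive lower bound on the density of the noise from \ref{item:L1}, the implicit function theorem and a change-of-variables show that the joint law of $(f_{\underline{\omega}^n}(\xvec^\star), D_{\xvec^\star} f_{\underline{\omega}^n})$ admits a continuous density, bounded below by some $\delta' > 0$, on an open product neighborhood $U \times V \subset \T^3 \times SL_3(\R)$ of $(f_{\underline{\omega}^n_\star}(\xvec^\star), \Psi_{\xvec^\star}(\underline{\omega}^n_\star))$.

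Conditions \ref{item:L1}--\ref{item:L3} are stable under small perturbations of the base point since they are openness-type statements about smooth submersions; moreover, because $\supp(\mu) = \T^3$, the full-measure set on which the Furstenberg invariance holds is dense in $\T^3$. One can therefore select $\xvec_0$ arbitrarily close to $\xvec^\star$ at which $(D_{\xvec_0} f_{\underline{\omega}}^n)_\# \nu_{\xvec_0} = \nu_{f_{\underline{\omega}}^n(\xvec_0)}$ holds for $\Prob$-a.e.\ $\underline{\omega}^n$, while the joint-submersion conclusion of the previous paragraph remains valid at $\xvec_0$. Reading the Furstenberg identity against the absolutely continuous joint law on $U \times V$, Fubini's theorem furnishes a point $y \in U$ and a positive-measure subset $V_y \subset V$ such that $A_\# \nu_{\xvec_0} = \nu_y$ for every $A \in V_y$. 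For any $A, A' \in V_y$, composition gives $(A^{-1}A')_\# \nu_{\xvec_0} = \nu_{\xvec_0}$, so $\nu_{\xvec_0}$ is invariant under the entire set $V_y^{-1} V_y$. By the Steinhaus--Weil lemma, $V_y^{-1} V_y$ contains an open neighborhood of the identity in the connected Lie group $SL_3(\R)$, hence generates it; so $\nu_{\xvec_0}$ is invariant under the projective action of all of $SL_3(\R)$ on $\S^2$.

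The contradiction is then classical: the stabilizer of any point in $\S^2$ under the $SL_3(\R)$ projective action is a non-unimodular parabolic subgroup, so no $SL_3(\R)$-invariant probability measure on $\S^2$ can exist. The main obstacle in this plan is the third step, which bridges the almost-sure Furstenberg invariance to a statement valid on an open set of matrices at a specific base point. This requires the absolutely continuous disintegration of the joint law of $(\Phi_{\xvec^\star}, \Psi_{\xvec^\star})$ over its fibers, provided by the joint submersion of \ref{item:L2}--\ref{item:L3}, together with the stability of these conditions under perturbations of the base point inside $\supp\mu$ so that the Furstenberg identity can actually be applied at a nearby regular point $\xvec_0$.
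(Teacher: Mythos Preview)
The paper does not actually prove this proposition in the text; it only records the guiding idea (``impose sufficient restrictions so that the bad case $\lambda_1=0$ in Proposition~\ref{prop:furstenberg} is ruled out'') and defers to \cite{BCZG23}*{Section 3.2} for the argument. Your proposal is precisely the standard proof carried out there: contrapose via Furstenberg's criterion, combine \ref{item:L2}--\ref{item:L3} into a joint submersion of $(\Phi_{\xvec^\star},\Psi_{\xvec^\star})$ into $\T^3\times SL_3(\R)$, push the noise density through to obtain an absolutely continuous joint law with a uniform lower bound on a product neighbourhood, and then use Fubini and the Steinhaus--Weil lemma to conclude that some $\nu_{\xvec_0}$ would be invariant under an open neighbourhood of the identity in $SL_3(\R)$, hence under the whole group, which is impossible on $\S^2$. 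The perturbation step (moving from $\xvec^\star$ to a nearby $\xvec_0$ in the $\mu$-full-measure Furstenberg set) and the full-support claim $\supp(\mu)=\T^3$ are handled correctly, the latter following from topological irreducibility in the Harris framework (and in fact $\mu=\leb$ under \ref{item:H3}, though you do not need that here). So your approach is correct and coincides with the intended one from the cited reference.
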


The idea behind this result is to impose sufficient restrictions so that the \emph{bad case} for which $\lambda_1=0$ in Proposition \ref{prop:furstenberg} is ruled out. For a discussion about this result see \cite{BCZG23}*{Section 3.2}.

At this stage we have all the tools needed to prove Theorem \ref{thm:mixing}. The to-do list reads a follows.
\begin{itemize}
    \item Check that the basic assumptions \ref{item:H1}--\ref{item:H3} are satisfied.
    \item Check topological irreducibility for the one-point, the projective and the two-point chains. Using Lemma \ref{prop:small-set}, check that the three Markov chains satisfy the small sets condition. Aperiodicity is obtained directly due to Remark \ref{remark:aperiodicity} and Lemma \ref{prop:aperiodicity}. Thanks to Harris Theorem \ref{thm:harris}, this yields geometric ergodicity for the one-point and projective processes since both are defined in compact spaces, $\T^3$ and $\T^3\times\S^2$ respectively.
    \item Check the positivity of the Lyapunov exponent using Proposition \ref{prop:lyapunov}.
    \item Via Lemma \ref{lemma:lyapunov-drif-condition}, geometric ergodicity of the one-point and projective chains together with a positive Lyapunov exponent yield the Lyapunov-drift condition for the two-point chain, concluding via Harris theorem that the two-point chain is $V-$geometrically ergodic.
\end{itemize}

Additionally Theorem \ref{thm:fast-dynamo} will be proved using a nonrandom version of the MET, the positivity of the Lyapunov exponent and the ergodicity of the three Markov chains $P$, $\hat{P}$ and $P^{(2)}$ in Section \ref{section:dynamo}.

\section{Almost-sure exponential mixing: proof of Theorem \ref{thm:mixing}}\label{section:proof-mixing}

In this section we show that the flow defined by the random ABC vector field \eqref{eq:random-abc-vectorfield} satisfies all the requirements to obtain almost sure exponential mixing. As noted in Remark \ref{remark:aperiodicity}, aperiodicty is given by allowing $\A$, $\B$ and $\C$ to take the trivial value zero, so we now put a focus on proving topological irreducibility and small sets of the three Markov chains $P$, $\hat{P}$ and $P^{(2)}$.

\subsection{Topological irreducibility and small sets}

We start by proving irreducibility and small sets. For the first property we will show that the three Markov chains are controllable, by making an explicit choice of parameters that take any initial point to any target point. To prove small sets we use Lemma \ref{prop:small-set}.

\subsubsection{The one-point chain}\label{section:one-point-chain}

Recall that for any $A\in \Bo(\T^3)$ and $\xvec=(x,y,z)\in\T^3$, we define the one-point Markov transition kernel via
\[
P(\xvec,A) = \Prob_0[f_\omega(\xvec) \in A].
\]

\begin{lemma}\label{lemma:control-onepoint-2}
Given any $\xvec,\xvec^\star \in\T^3$, there exist $N\in\N$ and  $\underline{\omega}^N=(\omega_1,\hdots,\omega_N)\in\Omega_0^N$ for which there holds $f_{\underline{\omega}^N}(\xvec)= \xvec^\star$, namely the one-point process is exactly controllable.
\end{lemma}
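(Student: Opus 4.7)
The plan is to exploit the fact that each amplitude parameter $\A_i, \B_i, \C_i$ may be chosen equal to zero (since $0 \in [-U,U]$), which turns the corresponding elementary shear into the identity. This lets us reduce an iteration $f_{\omega_i}$ to any single one of the three shears, so I can decouple the motion into two phases: first drive the $z$-coordinate to $z^\star$, then drive $(x,y)$ to $(x^\star,y^\star)$. Both phases take several iterations in general, since $U$ is not assumed to be large.

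\emph{Phase 1.} With $\A_i = \C_i = 0$, the iteration reduces to $f_{(\B_i,\beta_i)}$. Writing $\Delta z \in (-\pi,\pi]$ for the representative of $z^\star - z \pmod{2\pi}$ and setting $N_1 = \lceil \pi/U \rceil$, I choose at each step $\beta_i \equiv -x \pmod{2\pi}$, so that $\sin(x+\beta_i) = 0$ and $\cos(x+\beta_i) = 1$; then the iteration sends $(x,y,z) \mapsto (x,y,z+\B_i)$, leaving $x$ and $y$ untouched. Taking $\B_i = \Delta z/N_1 \in [-U,U]$ yields the point $(x,y,z^\star)$ after $N_1$ iterations. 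Crucially, because $x$ stays constant throughout, the same prescription for $\beta_i$ is valid at every step.

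\emph{Phase 2 and conclusion.} Starting from $(x,y,z^\star)$, set $\B_i = \C_i = 0$ so that each subsequent iteration reduces to $f_{(\A_i,\alpha_i)}$; this preserves $z$ and translates $(x,y)$ by the vector $\A_i(\sin(z^\star + \alpha_i), \cos(z^\star + \alpha_i))$, which can be any element of the closed disk of radius $U$ in $\R^2$. Splitting the target displacement $(x^\star - x, y^\star - y) \in \T^2$ into $N_2 = \lceil \pi\sqrt{2}/U \rceil$ summands of Euclidean length at most $U$ determines the required $(\A_i,\alpha_i)$ for each of the next $N_2$ iterations. Concatenating the two phases yields $f_{\underline{\omega}^N}(\xvec) = \xvec^\star$ with $N = N_1 + N_2$. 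The construction is fully explicit, so I anticipate no significant obstacle; its only substantive idea is the decoupling afforded by zeroing out amplitudes, which reduces the three-dimensional control problem to two independent lower-dimensional ones on coordinates that are not affected by each other's shears.
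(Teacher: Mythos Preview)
Your argument is correct, and it differs from the paper's in a small but genuine way. The paper does not zero out amplitudes; instead it uses all three shears in a single iteration, choosing the phases so that each shear moves only one coordinate (e.g.\ $\alpha$ with $\cos(z+\alpha)=1$ so the $\A$-shear moves only $y$, and so on in succession), yielding $x^\star=x+\C$, $y^\star=y+\A$, $z^\star=z+\B$; this reaches any target in one step when $U\geq\pi$, and the paper remarks that finitely many steps suffice otherwise. Your approach decouples by killing two amplitudes at a time and treating $z$ and then $(x,y)$ separately, which costs more iterations but gives an explicit bound $N=\lceil\pi/U\rceil+\lceil\pi\sqrt{2}/U\rceil$ valid for every $U>0$ without a case distinction. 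Both proofs rest on the same elementary observation---that the phase freedom lets a shear act as a pure translation in a chosen direction---so the difference is organisational rather than conceptual.
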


\begin{proof}
Exact controllability of the one-point chain is indeed achievable in one time step provided that $U\geq \pi$, i.e.\ the coefficients $\A$, $\B$ and $\C$ can take values up to the size of the domain. If this were not the case, then one might to do more steps (a finite number) to reach the target. To see this, notice that one can choose $\alpha$, $\beta$ and $\gamma$ such that $\cos(z+\alpha)=1$, $\cos(x+\beta)=1$ and $\cos(y+\gamma)=1$, so that there yields
\[
x^\star = x + \C, \quad y^\star = y + \A, \quad \text{and} \quad z^\star = z + \B.
\]
Now, since $\dist_\T(x^\star,x),\dist_\T(y^\star,y),\dist_\T(z^\star,z)\leq \pi\leq U$, there is a choice of $\A,\B,\C\in[-U,U]$ for which $\xvec^\star$ is reachable in one step.
\end{proof}

To check that the one-point chain admits a small set we use Lemma \ref{prop:small-set}. Observe that one must only check that the mapping $\Phi_{\xvec}:  \Omega_0^n  \to \T^3$ defined as
\begin{equation}\label{eq:Phimap}
\Phi_{\xvec}(\underline{\omega}^n) = f_{\underline{\omega}^n}(\xvec)
\end{equation}
is a submersion for some $n\in\N$, some $\xvec\in\T^3$ and some $\underline{\omega}^n\in\Omega_0^n$. The remaining assumptions needed to apply Lemma \ref{prop:small-set} are trivially satisfied by the definition of the noise that we are considering in this example.

For the one-point chain notice that for $n=1$, the matrix $D_\omega \Phi_{\xvec}$ is associated to a linear mapping from $\R^6$ to $\R^3$. For instance, we can select 
\[
\xvec=(0,0,0), \quad (\A,\B,\C)=(0,0,0), \quad (\alpha,\beta,\gamma)=\left(\frac{\pi}{2},\frac{\pi}{2},\frac{\pi}{2}\right),
\]
where $\omega=(\A,\B,\C,\alpha,\beta,\gamma)$. Then we obtain the matrix
\[
D_\omega \Phi_{\xvec} =  
\begin{pmatrix}
1 & 0 & 0 & 0 & 0 & 0\\
0 & 1 & 0 & 0 & 0 & 0\\
0 & 0 & 1 & 0 & 0 & 0
\end{pmatrix}
\]
that has full rank. This together with irreducibility and aperiocidity, given that $\T^3$ is a compact domain, yields that $P$ is uniformly geometric ergodic, with the Lebesgue measure as ergodic stationary measure.

\subsubsection{The projective chain}\label{section:projective-chain-2}

Let $\hat{A}\in\Bo(\T^3\times \S^2)$, $\xvec=(x,y,z)\in\T^3$, and $\vvec=(v^x,v^y,v^z)\in\S^2$, we define the projective Markov transition kernel as
\begin{equation}\label{eq:projective-chain}
\hat{P}((\xvec,\vvec),\hat{A})= \Prob_0\left[ \left(f_\omega(\xvec),\frac{D_{\xvec} f_\omega\vvec}{|D_{\xvec} f_\omega\vvec|}\right)\in \hat{A} \right].
\end{equation}
The matrix $D_{\xvec} f_\omega$ is a $3\times 3$ real matrix that has the following entries,
\begin{flalign*}
[D_{\xvec} f_\omega]_{11} & = 1 - \B\C \cos(\beta + x + \A \sin(\alpha + z)) &&\\
& \qquad \times\sin(
   \gamma + y + \A \cos(\alpha + z) + \B \sin(\beta + x + \A \sin(\alpha + z))), &&
\end{flalign*}
\begin{flalign*}
[D_{\xvec} f_\omega]_{12} = -\C \sin(\gamma + y + \A \cos(\alpha + z) + 
   \B \sin(\beta + x + \A \sin(\alpha + z))),  &&
\end{flalign*}
\begin{flalign*}
[D_{\xvec} f_\omega]_{13} & = \A \cos(\alpha + z) - 
 \C (\A\B \cos(\alpha + z) \cos(\beta + x + \A \sin(\alpha + z)) -
    \A \sin(\alpha + z)) &&\\
    & \qquad \times\sin(
   \gamma + y + \A \cos(\alpha + z) + \B \sin(\beta + x + \A \sin(\alpha + z))), &&
\end{flalign*}
\begin{flalign*}
 [D_{\xvec} f_\omega]_{21} = \B \cos(\beta + x + \A \sin(\alpha + z)), &&   
\end{flalign*}
\begin{flalign*}
[D_{\xvec} f_\omega]_{22} = 1, &&
\end{flalign*}
\begin{flalign*}
[D_{\xvec} f_\omega]_{23} = \A\B \cos(\alpha + z) \cos(\beta + x + \A \sin(\alpha + z)) - \A \sin(\alpha + z), && 
\end{flalign*}
\begin{flalign*}
[D_{\xvec} f_\omega]_{31} & = \B \C \cos(\beta + x + \A \sin(\alpha + z)) &&\\
    & \qquad \times\cos(
   \gamma + y + \A \cos(\alpha + z) + 
    \B \sin(\beta + x + \A \sin(\alpha + z))) &&\\
    & \qquad - 
 \B \sin(\beta + x + \A \sin(\alpha + z)), &&
\end{flalign*}
\begin{flalign*}
[D_{\xvec} f_\omega]_{32} = \C \cos(\gamma + y + \A \cos(\alpha + z) + 
   \B \sin(\beta + x + \A \sin(\alpha + z))), &&  
\end{flalign*}
\begin{flalign*}
[D_{\xvec} f_\omega]_{33} & = 1 + \C \cos(
   \gamma + y + \A \cos(\alpha + z) + 
    \B \sin(\beta + x + \A \sin(\alpha + z))) &&\\
    & \qquad \times(\A\B \cos(\alpha + z) \cos(
      \beta + x + \A \sin(\alpha + z)) - \A \sin(\alpha + z)) &&\\
    & \qquad - 
 \A\B \cos(\alpha + z) \sin(\beta + x + \A \sin(\alpha + z)). &&
\end{flalign*}
Even if componentwise this matrix seems certainly cluttered, there is some hidden structure. Let us introduce the following variables,
\begin{equation}\label{eq:new-variables}
Z = \alpha + z, \quad X=\beta+x+\A\sin Z, \quad Y = \gamma + y + \A\cos Z + \B\sin X,
\end{equation}
then we can write the matrix $D_{\xvec} f_\omega$ as
\[
D_{\xvec} f_\omega =  
\begin{pmatrix}
1-\B\C\cos X\sin Y & -\C\sin Y & -\A(\C\Phi\sin Y-\cos Z)\\
\B\cos X  & 1 & \A\Phi \\
\B(\C\cos X\cos Y-\sin X)  & \C\cos Y & 1+\A(\C\Phi\cos Y-\B\cos Z\sin X) 
\end{pmatrix},
\]
where $\Phi=\B\cos X\cos Z-\sin Z$. This matrix has the property that
\[
\det D_{\xvec} f_\omega = 1
\]
for all $\xvec\in\T^3$ and $\omega\in\Omega_0$. Invertibility of $D_{\xvec} f_\omega$ is a key feature that we will use to prove irreducibility of the projective chain. For this reason it is convenient to display the form that its inverse matrix has. Using the short hand notation \eqref{eq:new-variables}, we find that
\[
(D_{\xvec} f_\omega)^{-1} =  
\begin{pmatrix}
1-\A\B\sin X\cos Z & -\C(\A\Psi \cos Z-\sin Y) & -\A\cos Z\\
\B(\A\sin X\sin Z-\cos X)  & 1+\C(\A\Psi\sin Z-\B\cos X\sin Y) & \A\sin Z \\
\B\sin X  & \C\Psi & 1
\end{pmatrix},
\]

To show topological irreducibility of the projective Markov chain we will prove that, as the one-point chain, it is exactly controllable.

\begin{proposition}\label{prop:control-projective-2}
Given any two pairs $(\xvec,\vvec),(\xvec^\star,\vvec^\star)\in \T^3\times\S^2$, there exist $N\in\N$ and $\underline{\omega}^N=(\omega_1,\hdots,\omega_N)\in \Omega_0^N$ such that
\[
f_{\underline{\omega}^N}(\xvec) = \xvec^\star \quad \text{and} \quad \frac{D_{\xvec} f_{\underline{\omega}^N}\vvec}{|D_{\xvec} f_{\underline{\omega}^N}\vvec|}= \vvec^\star.
\]
\end{proposition}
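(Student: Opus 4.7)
My plan is to prove exact controllability of the projective chain by a two-stage construction. In the first stage, I apply Lemma \ref{lemma:control-onepoint-2} to obtain $N_1 \in \N$ and parameters $\underline{\omega}^{N_1} \in \Omega_0^{N_1}$ with $f_{\underline{\omega}^{N_1}}(\xvec) = \xvec^\star$; this evolves the direction $\vvec$ to some $\vvec_1 \in \S^2$ given by the projective image of $D_\xvec f_{\underline{\omega}^{N_1}}\vvec$. In the second stage, it then suffices to construct additional parameters $\underline{\omega}^{N_2} \in \Omega_0^{N_2}$ such that $f_{\underline{\omega}^{N_2}}$ fixes $\xvec^\star$ while its differential at $\xvec^\star$ maps $\vvec_1$ to a positive scalar multiple of $\vvec^\star$. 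Concatenating the two stages produces the required $\underline{\omega}^N$.

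To build the loops at $\xvec^\star$ needed for the second stage, I exploit position controllability a second time. Given any auxiliary parameter $\omega_a \in \Omega_0$, set $\xvec_a = f_{\omega_a}(\xvec^\star)$ and apply Lemma \ref{lemma:control-onepoint-2} to find a return sequence $\underline{\omega}_b$ with $f_{\underline{\omega}_b}(\xvec_a) = \xvec^\star$. The composition $f_{\underline{\omega}_b} \circ f_{\omega_a}$ is then a loop at $\xvec^\star$, whose differential $D_{\xvec_a} f_{\underline{\omega}_b} \cdot D_{\xvec^\star} f_{\omega_a} \in SL_3(\R)$ is a product of two matrices that can be varied by tuning $\omega_a$ (with $\underline{\omega}_b$ adjusted consistently to preserve the loop condition). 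The central claim is that the resulting family of loop differentials generates a semigroup whose induced action on $\S^2$ is transitive; composing finitely many such loops then yields a direction-rotation sending $\vvec_1$ to $\vvec^\star$.

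The main obstacle is justifying the transitivity claim for the loop differentials. Inspection of the explicit matrix formulas for $D_\xvec f_\omega$ displayed just before the proposition shows that each elementary differential $Df_{(\A,\alpha)}$, $Df_{(\B,\beta)}$, $Df_{(\C,\gamma)}$ is a unipotent shear in a distinct coordinate direction, and such shears along independent directions classically generate $SL_3(\R)$, which acts transitively on $\S^2$. To implement this rigorously I expect either to exhibit explicit parameter choices producing non-trivial shears in each coordinate plane (compensating the base-point displacement via the return leg), or to apply an infinitesimal submersion argument in the spirit of Lemma \ref{prop:small-set} and Proposition \ref{prop:lyapunov}: compute the Jacobian of the map $(\omega_a, \underline{\omega}_b) \mapsto D_{\xvec^\star}(f_{\underline{\omega}_b} \circ f_{\omega_a})$ restricted to the loop-constraint submanifold, and verify that its image contains a subspace of $\mathfrak{sl}_3(\R)$ rich enough that iteration generates any element of $SO(3)$. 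Once transitivity is in hand, a finite composition of loops delivers Stage 2, concluding the proof.
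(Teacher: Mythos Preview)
Your two-stage outline is a reasonable strategy, but Stage~2 contains the entire difficulty of the proposition and you have not carried it out. The assertion that loop differentials at $\xvec^\star$ act transitively on $\S^2$ is essentially as hard as the result you are trying to prove. Your heuristic that the elementary differentials $Df_{(\A,\alpha)}$, $Df_{(\B,\beta)}$, $Df_{(\C,\gamma)}$ are shears generating $SL_3(\R)$ ignores that in a loop these shears are evaluated at successive trajectory points, with parameters coupled through the return constraint $f_{\underline{\omega}_b}(\xvec_a)=\xvec^\star$; it is not automatic that the constrained family remains rich enough. Your submersion alternative would at best yield loop differentials filling an open neighbourhood of one particular matrix $M_0\in SL_3(\R)$ (and not of the identity, unless you redo the computation at trivial amplitudes, where surjectivity in fact fails for small $n$); promoting an open subset of $SL_3(\R)$ to a sub-\emph{semigroup} acting transitively on $\S^2$ is itself a nontrivial step you do not supply.

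The paper sidesteps loops entirely by a constructive three-step scheme that decouples position from direction. The key observation (Lemma~\ref{lemma:rigid-motion-projective-2}) is that setting $\B=0$ forces $D_\xvec f_\omega(1,0,0)^\top=(1,0,0)^\top$ while the remaining parameters $(\A,\C,\alpha,\gamma)$ still suffice for exact one-point control. The argument then runs: (i) drive $\vvec$ to $(1,0,0)$ via Lemma~\ref{lemma:correct-angle-projective-2}\ref{item:control1}, landing at some $\xvec_{N_1}$; (ii) use the $\B=0$ rigid motions to move $\xvec_{N_1}$ to a point $\overline\xvec$ while keeping the direction fixed at $(1,0,0)$; (iii) send $(1,0,0)$ to $\vvec^\star$ via Lemma~\ref{lemma:correct-angle-projective-2}\ref{item:control2}, where $\overline\xvec$ was chosen in advance (using invertibility of $f_\omega$, Remark~\ref{remark:inverse-flow-map-2}) so that this final leg terminates exactly at $\xvec^\star$. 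Every step is an explicit parameter choice, and no abstract semigroup or Lie-theoretic argument is needed.
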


The proof of this proposition can be split into three different parts. First of all, we prove that the vector $\overline{\vvec} = (1,0,0)$ is reachable from any $\vvec\in\S^2$. Second, we prove that we can get to any given $\overline{\xvec}\in\T^3$ from any point $\xvec\in\T^3$ only via \emph{rigid motions}, namely keeping $\overline{\vvec} = (1,0,0)$ fixed. Last, we show that we can go exactly from $\overline{\vvec} = (1,0,0)$ to the target $\vvec^\star$ in such a way that we end up exactly the target point $\xvec^\star$ for the dynamical system in $\T^3$.

Let us introduce the short notation defined recursively via
\[
\xvec_n = f_{\omega_n}(\xvec_{n-1})\in\T^3, \quad \vvec_n = \frac{D_{\xvec_{n-1}}f_{\underline{\omega}^n}\vvec_{n-1}}{|D_{\xvec_{n-1}}f_{\underline{\omega}^n}\vvec_{n-1}|}\in\S^2,
\]
with $\xvec_0=\xvec$ and $\vvec_0=\vvec$.  The first lemma we want to introduce displays the fact the the second component of the projective process, denoted by $\vvec_n$, is also exactly controllable.

\begin{lemma}\label{lemma:correct-angle-projective-2}
Let $\vvec\in\S^2$. The following results hold true.
\begin{enumerate}[label=(\arabic*), ref=(\arabic*)]
\item\label{item:control1} There exist $N_1\in\N$ and $\underline{\omega}^{N_1}=(\omega_1,\hdots,\omega_{N_1})\in\Omega_0^{N_1}$ such that $\vvec_{N_1} = (1,0,0)$. 
\item\label{item:control2} If initially $\vvec = (1,0,0)$, then given any $\overline{\vvec}\in\S^2$, there exist $N_2\in\N$ and $\underline{\omega}^{N_2}=(\omega_1,\hdots,\omega_{N_2})\in\Omega_0^{N_2}$ such that $\vvec_{N_2}=\overline{\vvec}$.
\end{enumerate}
\end{lemma}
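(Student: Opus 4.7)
The plan is to exploit the simple shear structure of the three building blocks $f_{(\A,\alpha)}$, $f_{(\B,\beta)}$, $f_{(\C,\gamma)}$. Setting $\B_n=\C_n=0$ collapses the $n$-th iteration to $f_{(\A_n,\alpha_n)}$ alone, whose Jacobian at a point with third coordinate $z$ is
\[
Df_{(\A,\alpha)}=I+\A\cos(z+\alpha)\,E_{13}-\A\sin(z+\alpha)\,E_{23},
\]
where $E_{ij}$ denotes the $3\times 3$ matrix with $1$ in the $(i,j)$ entry and $0$ elsewhere. Choosing the phase so that $\alpha\equiv-z\pmod{2\pi}$ yields $I+\A E_{13}$, while $\alpha\equiv\pi/2-z$ yields $I-\A E_{23}$; each is realizable for any $\A\in[-U,U]$. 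The analogous degenerate choices for the other two shears deliver all remaining elementary matrices $I+sE_{ij}$ with $i\neq j$ and $|s|\le U$.

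Since $(I+sE_{ij})(I+s'E_{ij})=I+(s+s')E_{ij}$, consecutive iterations --- with the phases recomputed from the current position at each step --- extend the admissible range to all $s\in\R$, and these one-parameter subgroups generate $SL_3(\R)$. Consequently, the cumulative Jacobian $D_{\xvec_0}f_{\underline{\omega}^n}$ can be arranged to coincide with any prescribed $g\in SL_3(\R)$ after finitely many iterations. Both conclusions of the lemma then reduce to the transitivity of the $SL_3(\R)$-action on $\R^3\setminus\{0\}$: for (1), one picks $g$ with $g\vvec=(1,0,0)^\top$, the generic choice being $g=(I-(v^z/v^x)E_{31})(I-(v^y/v^x)E_{21})$ when $v^x>0$; for (2), any $g\in SL_3(\R)$ whose first column equals $\overline{\vvec}$ works. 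Normalization then produces the desired unit vector in either case.

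The main (mild) obstacle is a case analysis for degenerate configurations in which $\vvec$ (or the target $\overline{\vvec}$) lies outside the open half-space $\{v^x>0\}$. These are handled by prepending a short, bounded-length sequence of shears that first swings the vector into that half-space --- for example, the three-step sequence $(I+E_{21})$, $(I-2E_{12})$, $(I+E_{21})$ carries $(-1,0,0)\to(-1,-1,0)\to(1,-1,0)\to(1,0,0)$ --- after which the preceding construction applies. All such corrections are finite, so $N_1$ and $N_2$ exist. Throughout, the position $\xvec_n$ evolves freely but plays no role in the statement of the lemma.
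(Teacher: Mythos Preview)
Your argument is correct and takes a cleaner, more structural route than the paper. The paper proceeds by explicit ad hoc parameter solving: for part (1) it writes $\vvec = (D_{\xvec}f_\omega)^{-1}(k,0,0)^\top$ and solves component-wise for $\omega$, handling the cases $v^z \neq 0$, $v^z = 0 \neq v^x$, and $\vvec = (0,\pm 1,0)$ separately (the last requiring a preliminary step); for part (2) it routes through the intermediate vector $(1,1,1)/\sqrt{3}$ and again inverts the Jacobian map by hand, with further case distinctions on whether $v^y$ vanishes. You instead observe that zeroing two of the three amplitudes in a given iteration isolates a single shear whose Jacobian, after the appropriate phase choice, is an elementary transvection $I + sE_{ij}$; since these generate $SL_3(\R)$ and the phases can be recomputed from the running position, the cumulative Jacobian can be made to equal any prescribed element of $SL_3(\R)$, and both parts follow at once from the transitivity of the $SL_3(\R)$ action on $\R^3 \setminus \{0\}$. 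Your approach makes the underlying mechanism transparent and sidesteps the component-wise casework, at the cost of losing the paper's explicit (and quite short) bounds on the number of iterations---which are, however, not required for the lemma as stated.
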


\begin{proof}
The first point that we will show is that the process $\vvec_n$ can actually reach one of the poles in one time step. Since in the claim we are choosing specifically the pole $(1,0,0)$, we will show that anyways one can reach this pole in at most two time steps from any starting point $
\vvec\in\S^2$. Fix $\vvec\in\S^2$ and $\xvec\in\T^3$, then notice that we look for a choice of $\omega\in\Omega_0$ and some $k\in\R$, $k\neq 0$, so that 
\[
\vvec =  
\begin{pmatrix}
v^x \\
v^y \\
v^z
\end{pmatrix}
 = (D_{\xvec} f_\omega)^{-1}
\begin{pmatrix}
k \\
0 \\
0
\end{pmatrix} = k 
\begin{pmatrix}
1-\A\B\sin X\cos Z \\
\B(\A\sin X\sin Z-\cos X) \\
\B\sin X
\end{pmatrix},
\]
where we use the variables introduced in \eqref{eq:new-variables}. We can choose $\alpha\in [0,2\pi)$ so that $\cos(\alpha+z)=1$, and hence
\[
\begin{pmatrix}
v^x \\
v^y \\
v^z
\end{pmatrix}= k 
\begin{pmatrix}
1-\A\B\sin (\beta+x)) \\
-\B\cos (\beta+x) \\
\B\sin (\beta+x)
\end{pmatrix}.
\]
Now, if $v^z\neq 0$, choose $k=1$, $\B\neq 0$ and $\beta\in [0,2\pi)$ so that $v^y=\B\cos(\beta+x)$ and $v^z=\B\sin(\beta+x)$, and choose $\A$ such that $v^x=1-\A v^z$. However, if $v^z=0$ and $v^x\neq 0$, choose $\beta$ so that $\sin(\beta+x)=0$, and $\B$ so that the vector $(1,-\B,0)$ is parallel to $(v^x,v^y,0)$. 

It is clear that the only situation that is not achievable in one time step and using this choice of parameters is $v^x=v^z=0$. Arguably this is not a problem because this would be one of the degenerate situations that we are looking for. However, for the sake of completion we will show that one can also take the vector $(0,1,0)$ to $(1,0,0)$ in two time steps. To see this, notice that
\[
D_{\xvec} f_\omega 
\begin{pmatrix}
0 \\
1 \\
0
\end{pmatrix} =  
\begin{pmatrix}
-\C \sin(\gamma + y + \A \cos(\alpha + z) + 
   \B \sin(\beta + x + \A \sin(\alpha + z))) \\
1 \\
\C \cos(\gamma + y + \A \cos(\alpha + z) + 
   \B \sin(\beta + x + \A \sin(\alpha + z)))
\end{pmatrix},
\]
hence it is clear that one can choose $\omega\in \Omega_0$ so that the first component is not vanishing. Now, repeating the previous argument for the case $v^x\neq 0$ and there yields the claim of \ref{item:control1}.

To prove \ref{item:control2}, we argue in a similar fashion. First of all, notice that
\begin{equation}\label{eq:DF100}
D_{\xvec} f_\omega 
\begin{pmatrix}
1 \\
0 \\
0
\end{pmatrix} =  
\begin{pmatrix}
1-\B\C\cos X\sin Y\\
\B\cos X  \\
\B(\C\cos X\cos Y-\sin X)  
\end{pmatrix}.
\end{equation}
In order to simplify the computations, let us choose as before $\alpha\in [0,2\pi)$ so that $\cos(\alpha+z)=1$,
\[
D_{\xvec} f_\omega 
\begin{pmatrix}
1 \\
0 \\
0
\end{pmatrix} =  
\begin{pmatrix}
1 - \B\C \cos(\beta + x) \sin(\A + \gamma + y + \B \sin(\beta + x)) \\
\B\cos (\beta+x) \\
\B\C \cos(\beta + x) \cos(\A + \gamma + y + \B \sin(\beta + x)) - 
 \B \sin(\beta + x)
\end{pmatrix}.
\]
The goal is to reach the target vector $\vvec=(v^x,v^y,v^z)$, however this might not be possible in one time step since $\A,\B,\C\in[-U,U]$. The strategy will be to take an intermediate step in the vector $\overline{\vvec}=(1,1,1)/\sqrt{3}$. Let us choose $\beta\in [0,2\pi)$ so that $\cos(\beta+x)=1$, hence we look for parameters $\A$, $\B$, $\gamma$ and $k$ such that
\[
\begin{pmatrix}
1 - \B\C \sin(\A + \gamma + y) \\
\B \\
\B\C \cos(\A + \gamma + y)
\end{pmatrix} = k 
\begin{pmatrix}
1 \\
1 \\
1
\end{pmatrix}.
\]
This is an admissible step since, for instance, $k=1$, $\B=1$, $\C=1$ and $\A+\gamma$ such that $\cos(\A+\gamma+y)=1$, yields a solution. 

On a second time step we want to reach the target vector $\vvec=(v^x,v^y,v^z)$ from $\overline{\vvec}=(1,1,1)/\sqrt{3}$. Now it is more convenient to consider the inverse map from $\vvec$. To simplify the computations as before, we choose $\alpha$ and $\beta$ so that $\cos(\alpha+z)=1$ and $\cos(\beta+x)=1$, and in addition now we impose $\A=0$, then we obtain
\[
(D_{\xvec} f_\omega)^{-1} 
\begin{pmatrix}
v^x \\
v^y \\
v^z
\end{pmatrix} =  
\begin{pmatrix}
v^x + \C  \sin(\gamma + y)v^y\\
-\B v^x +  (1 - \B\C \sin(\gamma + y))v^y \\
v^z - \C \cos(\gamma + y)v^y
\end{pmatrix}.
\]
Let us assume for the moment that $v^y\neq 0$, and notice that for the mapping
\[
\wvec =  
\begin{pmatrix}
w^x \\
w^y \\
w^z
\end{pmatrix}
= (D_{\xvec} f_\omega)^{-1}
\begin{pmatrix}
v^x \\
v^y \\
v^z
\end{pmatrix} = 
\begin{pmatrix}
v^x + \C  \sin(\gamma + y)v^y\\
-\B v^x +  (1 - \B\C \sin(\gamma + y))v^y \\
v^z - \C \cos(\gamma + y)v^y
\end{pmatrix}
\]
the second component has a fixed point $w^y=v^y$ provided that $\B=0$. Therefore, for fixed $v^y\neq 0$, there exist some $N\in\N$ and finite sequences $\{\C_n\}_{1\leq n\leq N}\subset [-U,U]$, $\{\gamma_n\}_{1\leq n\leq N}\subset [0,2\pi)$, defined by
\[
\wvec_n =  
\begin{pmatrix}
w^x_n \\
w^y_n \\
w^z_n
\end{pmatrix} = 
\begin{pmatrix}
w^x_{n-1} + \C_n  \sin(\gamma_n + y)v^y \\
v^y \\
w^z_{n-1} - \C_n \cos(\gamma_n + y)v^y
\end{pmatrix},
\]
with $\wvec_0=\vvec$ and such that $w^x_N=w^z_N=k$ for some $k\in\R$, $k\neq 0$. Notice that the coordinate $y$ in these dynamics is a fixed point, it should in principle change with $n$, but it does not since $\A_n=\B_n=0$ for all $1\leq n\leq N$. In the next step, the choice $\C_{N+1}=0$ and $\B_{N+1}=-1$ yields that
\[
\wvec_{N+1} = k 
\begin{pmatrix}
1 \\
1 \\
1
\end{pmatrix}
\]
which concludes the proof of the lemma for all target points $\vvec$ with second component $v^y\neq 0$. For the remaining case just notice that we can take a preliminary step where now we do not assume $\A=0$ as before and define
\[
\wvec_0 = (D_{\xvec} f_\omega)^{-1} 
\begin{pmatrix}
v^x \\
0 \\
v^z
\end{pmatrix}
= 
\begin{pmatrix}
v^x - \A v^z \\
-\B v^x \\
v^z
\end{pmatrix}.
\]
If $v^x\neq 0$, any choice of $\B\neq 0$ produces $w^y\neq 0$. If we are in the degenerate situation that $v^x=v^z=0$ then $v^z=1$, so selecting $\A\neq 0$ one produces $w^x\neq 0$. Finally, repeating the scheme designed for the case $v^y\neq 0$ we find that the proof of the lemma is complete.
\end{proof}

\begin{lemma}\label{lemma:rigid-motion-projective-2}
Assume that $\vvec = (1,0,0)$. Then given any $\xvec,\overline{\xvec}\in\T^2$ and any $\varepsilon>0$, there exist $N_3\in\N$ and $\underline{\omega}^{N_3}=(\omega_1,\hdots,\omega_{N_3})\in\Omega_0^{N_3}$ such that
\[
f_{\underline{\omega}^{N_3}}(\xvec)=\overline{\xvec}, \quad \text{and}\quad \vvec_{N_3}= (1,0,0).
\]
\end{lemma}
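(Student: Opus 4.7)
The plan is to observe that taking $\B_i = 0$ at every iteration simultaneously achieves two goals: it reduces each step $f_{\omega_i} = f_{(\C_i,\gamma_i)} \circ f_{(\B_i,\beta_i)} \circ f_{(\A_i,\alpha_i)}$ to the two-factor composition $f_{(\C_i,\gamma_i)} \circ f_{(\A_i,\alpha_i)}$, and it makes $D_\xvec f_{\omega_i}$ preserve the vector $(1,0,0)^\top$. Indeed, setting $\B = 0$ in \eqref{eq:DF100} immediately gives $D_\xvec f_\omega (1,0,0)^\top = (1,0,0)^\top$; the same is evident factor-by-factor from the block-triangular Jacobians of $f_{(\A,\alpha)}$ and $f_{(\C,\gamma)}$. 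Hence any orbit generated with $\B_i \equiv 0$ will keep $\vvec_n = (1,0,0)$ at every intermediate step.

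With $\B = 0$ fixed throughout, specializing further $\C = 0$ reduces the single step to the shear $f_{(\A,\alpha)}\colon (x,y,z)\mapsto (x + \A\sin(z+\alpha), y + \A\cos(z+\alpha), z)$, a translation of $(x,y)$ at frozen $z$. Symmetrically, specializing $\A = 0$ reduces the step to $f_{(\C,\gamma)}\colon (x,y,z)\mapsto (x + \C\cos(y+\gamma), y, z + \C\sin(y+\gamma))$, a translation of $(x,z)$ at frozen $y$. In each of these two modes, the translation vector can point in any direction while having magnitude in $[0,U]$.

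The rest of the argument is a two-phase planar controllability. \textbf{Phase 1:} apply $k_1 = O(1/U)$ steps with $\B_i = \C_i = 0$, choosing $\alpha_i$ so that $\cos(z+\alpha_i) = 1$ (equivalently $\sin(z+\alpha_i) = 0$). Each such step changes $y$ by $\A_i \in [-U,U]$ and leaves $x,z$ untouched; with a suitable telescoping choice of the $\A_i$'s summing to the torus distance from $y$ to $\overline{y}$, we land exactly at $(x,\overline{y},z)$. \textbf{Phase 2:} apply $k_2 = O(1/U)$ further steps with $\A_j = \B_j = 0$; these keep $y$ fixed at $\overline{y}$ and translate $(x,z)$ by $(\C_j\cos(\overline{y}+\gamma_j), \C_j\sin(\overline{y}+\gamma_j))$, an arbitrary planar vector of magnitude $\le U$. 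A finite telescoping choice then lands us at $(\overline{x},\overline{y},\overline{z})$. Taking $N_3 = k_1 + k_2$ and concatenating the two phases concludes the proof.

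I do not anticipate a substantive obstacle. The only point requiring care is the invariance of $(1,0,0)$ along the composition, which follows immediately from the explicit block-triangular shape of the individual Jacobians whenever $\B = 0$; once that is in hand, the position-controllability question decouples into two independent planar translations and is elementary. The role of the $\varepsilon > 0$ tolerance in the statement is thus absorbed harmlessly---exact equality is actually achievable---and so Phase 1 and Phase 2 together yield the claim.
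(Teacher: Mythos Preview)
Your proposal is correct and follows essentially the same approach as the paper: set $\B=0$ at every step so that \eqref{eq:DF100} gives $D_\xvec f_\omega(1,0,0)^\top=(1,0,0)^\top$, and then control the base point using the remaining $\A$- and $\C$-shears. The only cosmetic difference is that the paper packages the $\A$- and $\C$-moves into a single composite step (choosing $\alpha$ with $\cos(\alpha+z)=1$ to hit $\overline y$ first, then steering $(x,z)$ in the plane $\{y=\overline y\}$), whereas you split them into two clean phases; the mechanism is identical.
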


\begin{proof}
On the one hand, notice that if one wants keep the vector $\vvec=(1,0,0)$ a fixed point in the projective dynamics, it suffices to fix the parameter $\B=0$, see \eqref{eq:DF100}.
On the other hand, as it is evident from the construction of the one-point chain, even with the restriction $\B=0$ the process is exactly controllable. To picture this, notice that if $\B=0$ then
\[
\begin{pmatrix}
x_1 \\
y_1 \\
z_1
\end{pmatrix}
 = 
\begin{pmatrix}
x + \C \cos(\gamma + y + \A \cos(\alpha + z)) + \A \sin(\alpha + z) \\
y + \A \cos(\alpha + z) \\
z + \C \sin(\gamma + y + \A \cos(\alpha + z))
\end{pmatrix}.
\]
Now choose $\alpha$ so that $\cos(\alpha + z)=1$ and $\A$ such that $y+\A=\overline{y}$. Then there yields,
\[
\begin{pmatrix}
x_1 \\
y_1 \\
z_1
\end{pmatrix}
 = 
\begin{pmatrix}
x + \C \cos(\gamma + \overline{y}) \\
\overline{y} \\
z + \C \sin(\gamma + \overline{y})
\end{pmatrix}
.
\]
The goal now is to reach the point $(\overline{x},\overline{z})$ from $(x,z)$ in the plane $\{y=\overline{y}\}$. It is possible that such thing is not doable in one time step since $|\C|\leq U$, but it is clear that $(\overline{x},\overline{z})$ is reachable in a finite number of steps. In particular, the maximum distance between $(\overline{x},\overline{z})$ and $(x,z)$ is $\sqrt{2}\pi$, then if we assume $U=\pi$, this means that with these constrains any point can be reached at most in two time steps.
\end{proof}

\begin{proof}[Proof of Proposition \ref{prop:control-projective-2}]
Let $\varepsilon>0$, and fix $(\xvec,\vvec),(\xvec^\star,\vvec^\star)\in\T^3\times\S^2$. Using Lemma \ref{lemma:correct-angle-projective-2}, there exist $N_1>0$ and $\underline{\omega}^{N_1}\in\Omega_0^{N_1}$ such that 
\[
\vvec_{N_1} (\vvec)= \frac{D_{\xvec_{N_1}}f_{\underline{\omega}^{N_1}}\vvec}{|D_{\xvec_{N_1}}f_{\underline{\omega}^{N_1}}\vvec|} = \overline{\vvec} = 
\begin{pmatrix}
1\\
0\\
0
\end{pmatrix}
.
\]
This process brings the point $\xvec\in\T^3$ to $\xvec_{N_1}\in\T^3$. An application of the same lemma yields that there exist $N_2>0$ and $\underline{\omega}^{N_2}\in\Omega_0^{N_2}$ so that
\[
\vvec_{N_2}(\overline{\vvec}) = \frac{D_{\xvec_{N_2}}f_{\underline{\omega}^{N_2}}\overline{\vvec}}{|D_{\xvec_{N_2}}f_{\underline{\omega}^{N_2}}\overline{\vvec}|} = \vvec^\star,
\]
hence we define
\[
\overline{\xvec} = f_{\underline{\omega}^{N_2}}^{-1}(\xvec^\star).
\]
Lemma \ref{lemma:rigid-motion-projective-2} implies that there exist $N_3\in\N$ and $\underline{\omega}^{N_3}\in\Omega_0^{N_3}$ such that
\[
f_{\underline{\omega}^{N_3}}(\xvec_{N_1})=\overline{\xvec} \quad \text{and}\quad \vvec_{N_3}= 
\begin{pmatrix}
1\\
0\\
0
\end{pmatrix}
.
\]
All in all, we set the noise vector $\underline{\omega}^N = (\underline{\omega}^{N_1},\underline{\omega}^{N_3},\underline{\omega}^{N_2})\in\Omega_0^N$, where $N=N_1+N_3+N_2$, so that we obtain $\vvec_N=\vvec^\star$ and $f_{\underline{\omega}^N}(\xvec)=\xvec^\star$, and thus there yields the claim of the proposition.
\end{proof}

\begin{remark}[On the invertibility of the flow map]\label{remark:inverse-flow-map-2}
In the proof of Proposition \ref{prop:control-projective-2} we have used the inverse map $f_{\underline{\omega}^{N_2}}^{-1}(\xvec^\star)$. Give a noise realisation $\omega\in\Omega_0$, the inverse flow map is defined by
\[
f^{-1}_\omega(\xvec) = \left(f_{(\A,\alpha)}^{-1}\circ f_{(\B,\beta)}^{-1}\circ f_{(\C,\gamma)}^{-1}\right)(\xvec),
\]
where the inverse of each of the partial applications of the flow can be presented from two different perspectives,
\begin{enumerate}
    \item either we can choose to define
    \[
    f_{(\A,\alpha)}^{-1} = f_{(\A,\alpha+\pi)}, \quad f_{(\B,\beta)}^{-1} = f_{(\B,\beta+\pi)}, \quad f_{(\C,\gamma)}^{-1} = f_{(\C,\gamma+\pi)};
    \]
    \item or either
    \[
    f_{(\A,\alpha)}^{-1} = f_{(-\A,\alpha)}, \quad f_{(\B,\beta)}^{-1} = f_{(-\B,\beta)}, \quad f_{(\C,\gamma)}^{-1} = f_{(-\C,\gamma)}.
    \]
\end{enumerate}
Both definitions yield that $f_{\omega}^{-1}(f_{\omega}(\xvec)) = \xvec$, and moreover they fall under the theoretical frame here considered with small modifications. Therefore one can apply Lemma \ref{lemma:rigid-motion-projective-2} as well to the inverse flow map as required in the proof of the proposition.
\end{remark}

To show that the projective process admits a small set we find $\xvec$ and $\underline{\omega}^n$ for which the assumption of Lemma \ref{prop:small-set} are satisfied.

Given $n\in\N$ and $(\xvec,\vvec)\in\T^3\times\S^2$ we define the mapping
$\hat{\Phi}_{\xvec,\vvec}:  \Omega_0^n  \to  \T^3\times\S^2$ as
\begin{equation}
\hat{\Phi}_{\xvec,\vvec}(\underline{\omega}^n) = \left(f_{\underline{\omega}^n}(\xvec),\frac{D_{\xvec} f_{\underline{\omega}^n}\vvec}{|D_{\xvec} f_{\underline{\omega}^n}\vvec|}\right).
\end{equation}
It is sufficient to consider the case $n=1$, so $D_{\omega}\hat{\Phi}_{(\xvec,\vvec)}$ can be understood a linear map from $\R^6$ to $\R^6$. The sufficient condition for the projective process to admit a small set then boils down to find a collection of $\xvec$, $\vvec$ and $\omega=(\A,\B,\C,\alpha,\beta,\gamma)$ for which $\text{rank} (D_{\omega}\hat{\Phi}_{(\xvec,\vvec)})\geq 5$. In this regard let us define
\[
\xvec=(0,0,0),\quad \vvec=(0,1,0),\quad (\A,\B,\C)=\left( \pi,0,1 \right), \quad (\alpha,\beta,\gamma)=(0,0,0),
\]
which yields
\[
D_{\omega}\hat{\Phi}_{(\xvec,\vvec)} = 
\begin{pmatrix}
0 & 0 & -1 & \pi & 0 & 0\\
1 & 0 & 0 & 0 & 0 & 0\\
-1 & 1 & 0 & 0 & 0 & -1\\
1/\sqrt{2} & 0 & 0 & 0 & 0 & 1/\sqrt{2}\\
0 & 0 & -1/(2\sqrt{2}) & 0 & 0 & 0\\
0 & 0 & -1/(2\sqrt{2}) & 0 & 0 & 0
\end{pmatrix}.
\]
This matrix has rank $5$ since the first five rows are linearly independent, thus from Lemma \ref{prop:small-set} it follows that the projective chain admits a small set. This result together with irreducibility and aperiodicity yields in fact that $\hat{P}$ is uniformly geometrically ergodic in $\T^3\times\S^2$.

\subsubsection{The two-point chain}\label{sectio:two-point-chain}

The two-point chain is arguably the most important object in the study of mixing. It gives information about the position of two particles starting at $\xvec^1=(x^1,y^1,z^1)\in\T^3$ and $\xvec^2=(x^2,y^2,z^2)\in\T^3$ respectively, with $\xvec^1\neq\xvec^2$. After $n\in\N$ iterations in which both particles are advected by the same vector field with the same noise realisation $\underline{\omega}\in\Omega$, the position of the particles is given by $f^n_{\underline{\omega}}(\xvec^1)$ and $f^n_{\underline{\omega}}(\xvec^2)$ respectively, where there holds as well $f^n_{\underline{\omega}}(\xvec^1)\neq f^n_{\underline{\omega}}(\xvec^2)$.

Precisely to hold account of such points in the diagonal we introduce the notation 
\[
\Delta = \{(\xvec^1,\xvec^2)\in\T^3\times\T^3\mid \xvec^1=\xvec^2\}\subset \T^3\times\T^3.
\]
The necessity of removing this subset comes from the fact that there is a invariant measure supported in $\Delta$, namely the complement set of the diagonal is not reachable from the diagonal. Therefore exponential ergodicity in the full space $\T^3\times\T^3$ is not viable. In particular, the dynamics in $\Delta$ are perfectly described by the one-point chain and well-posedness of the ODE
\[
\frac{\dd}{\dd t}\phi_t(\xvec) = u(t,\phi_t(\xvec)), \quad t>0,\xvec\in\T^3
\]
with a Lipschitz vector field $u$ like \eqref{eq:random-abc-vectorfield} yields that the neither the diagonal is reachable from out of the diagonal, giving really (at least) two different invariant measures in $\T^3\times\T^3$.

With this notation in hand, for any Borel set $A^{(2)}\in\cB((\T^3\times\T^3)\setminus\Delta)$, and any $(\xvec^1,\xvec^2)\in(\T^3\times\T^3)\setminus\Delta$, we define the two-point Markov chain via
\begin{equation}\label{eq:twopoint-chain-2}
P^{(2)}((\xvec^1,\xvec^2),A^{(2)})= \Prob_0\left[ \left(f_\omega(\xvec^1),f_\omega(\xvec^2)\right)\in A^{(2)} \right].
\end{equation}
Just like for the projective chain, let us introduce the following short notation,
\[
\xvec^1_n = f_{\omega_n}(\xvec^1_{n-1}) \quad \text{and}\quad \xvec^2_n = f_{\omega_n}(\xvec^2_{n-1})
\]
where $\xvec^1_0 = \xvec^1$ and $\xvec^2_0 = \xvec^2$. Analogously to the previous section, to prove that that the two-point chain is topologically irreducible, we will show that it is indeed exactly controllable.

\begin{proposition}\label{prop:control-twopoint-2}
Given any two elements $(\xvec^1,\xvec^2),(\xvec^{\star,1},\xvec^{\star,2})\in(\T^3\times\T^3)\setminus\Delta$, there exist $N\in\N$ and $\underline{\omega}^N=(\omega_1,\hdots,\omega_N)\in \Omega_0^N$ such that
\[
f_{\underline{\omega}^N}(\xvec^1)=\xvec^{\star,1} \quad \text{and} \quad f_{\underline{\omega}^N}(\xvec^2)=\xvec^{\star,2}.
\]
\end{proposition}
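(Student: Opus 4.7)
The overall strategy is to adapt the three-phase approach of Proposition \ref{prop:control-projective-2}: (i) use one-point controllability to place $\xvec^1$ at $\xvec^{\star,1}$; (ii) construct a ``rigid motion'' family of shear compositions that fix $\xvec^{\star,1}$ and move the image of $\xvec^2$ arbitrarily; and (iii) by pre-planning the sequence of steps (exploiting the inverse flow from Remark \ref{remark:inverse-flow-map-2}) complete the motion of both points simultaneously. After step (i), carried out via Lemma \ref{lemma:control-onepoint-2}, the second point is carried to some $\tilde{\xvec}^2 \in \T^3 \setminus \{\xvec^{\star,1}\}$; distinctness is preserved because each $f_{\omega_i}$ is a volume-preserving diffeomorphism, since $\det D_{\xvec} f_\omega = 1$.

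The main new ingredient is phase (ii). Although a single nontrivial shear fixes no point of $\T^3$, a mixed composition can: for $\xvec^{\star,1} = (x^\star, y^\star, z^\star)$, the map $f_{(\B,\beta)} \circ f_{(\A,\alpha)}$ with $\alpha = -z^\star$, $\beta = -\pi/2 - x^\star$ and $\B = \A$ fixes $\xvec^{\star,1}$ while acting nontrivially at other points. Mixing shear types is essential: two $\A$-shears (or two $\B$-shears, or two $\C$-shears) that cancel at a single point must cancel everywhere, because the combined $(x,y)$-shift reads $\A e^{i(z+\alpha)} + \A' e^{i(z+\alpha')} = e^{iz}(\A e^{i\alpha} + \A' e^{i\alpha'})$, whose vanishing is independent of $z$. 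Varying $\A \in [-U,U]$ and cyclically permuting the roles of the three shear types yields a multi-parameter family of stabilizers; iterating these should transport $\tilde{\xvec}^2$ to any prescribed target in $\T^3 \setminus \{\xvec^{\star,1}\}$. Local transitivity at $\xvec^{\star,1}$ reduces to a rank check on the orbit map in parameter space, in the spirit of the submersion computations in Sections \ref{section:one-point-chain}--\ref{section:projective-chain-2}, while global transitivity then follows by connectedness of the punctured torus.

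The main obstacle is making phase (ii) rigorous, in particular the passage from local to global transitivity of the stabilizer family. A more hands-on alternative that sidesteps this difficulty is a direct multi-shear construction: after a preparatory shear ensuring the two points are in general position (pairwise distinct $x$-, $y$-, and $z$-coordinates modulo $\pi$), one applies $f_{(\B,\beta)}$ to match both $z$-targets (solvable in $\B,\beta$ provided $x^1 \not\equiv x^2 \pmod{\pi}$), then $f_{(\A,\alpha)}$ to match both $y$-targets, then $f_{(\C,\gamma)}$ to match both $x$-targets; each step perturbs coordinates set at earlier steps, so a further round of corrections is needed, converging by a contraction argument once the residuals are small. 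The amplitude bound $|\A|,|\B|,|\C| \leq U$ forces a splitting of large displacements across several iterations, as in Lemma \ref{lemma:control-onepoint-2}. Degenerate configurations in which two coordinates coincide modulo $\pi$ are handled by one or two additional preparatory shears, exactly as in the proof of Lemma \ref{lemma:correct-angle-projective-2}.
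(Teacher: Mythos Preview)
Your proposal sketches two plausible strategies but completes neither, and both differ from the paper's route. The paper's argument is more direct: Lemma \ref{lemma:rigid-motion-twopoint1-2} first brings any pair $(\xvec^1,\xvec^2)$ to the special configuration $x^1=x^2$, $y^1=y^2$, with $|z^1-z^2|$ prescribed small; the key observation (Lemma \ref{lemma:rigid-motion-twopoint2-2}) is that in this configuration, setting $\A=0$ makes $f_\omega$ act by the \emph{same} translation on both points, because the $\B$- and $\C$-shears depend only on the $x$- and $y$-coordinates, which now agree. Hence the pair moves rigidly, and one-point controllability (which still holds with $\A=0$) transports $\xvec^1$ wherever needed without disturbing the relative position. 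Matching to the target is done by running the same reduction backward from $(\xvec^{\star,1},\xvec^{\star,2})$ via Remark \ref{remark:inverse-flow-map-2}.

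Your stabilizer approach has a genuine gap at the local-to-global step: for connectedness to promote local transitivity to global, the rank condition must hold at \emph{every} point of $\T^3\setminus\{\xvec^{\star,1}\}$, not just near the fixed point, and you give no argument for this. Your alternative direct multi-shear construction is also incomplete: the observation that a single $f_{(\B,\beta)}$ can match both $z$-targets when $x^1\not\equiv x^2\pmod\pi$ is correct, but the subsequent $f_{(\C,\gamma)}$ step (to match $x$-targets) depends on $y$ and simultaneously shifts the already-matched $z$-coordinates by $\C\sin(y^j+\gamma)$; you assert convergence of the resulting correction iteration without any estimate, and there is no evident contraction. The paper's device of first collapsing two of the three coordinate differences to zero is precisely what decouples this feedback and is the idea missing from your outline.
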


The proof of this proposition is split in a number of steps. The strategy consists on selecting appropriate noise realisations so that we can bring the two particles arbitrarily close, then we can bring one of them to a target $\overline{\xvec}$ via rigid rotations, so that the distance between both particles remains fixed. Finally, we close the argument using the invertibility of the vector field.

\begin{lemma}\label{lemma:rigid-motion-twopoint1-2}
Let $(\xvec^1,\xvec^2)\in(\T^3\times\T^3)\setminus\Delta$. Given any $\varepsilon>0$, there exists $N_1\in\N$ and $\underline{\omega}^{N_1}\in\Omega_0^{N_1}$ such that $x^1_{N_1}=x^2_{N_1}$, $y^1_{N_1}=y^2_{N_1}$, and $|z^1_{N_1}-z^2_{N_1}|<\varepsilon$. In particular,
\[
\dist_{\T^3}(\xvec^1_{N_1},\xvec^2_{N_1}) <\varepsilon.
\]
\end{lemma}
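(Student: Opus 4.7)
The plan is to exploit the triangular structure of the map $f_\omega = f_{(\C,\gamma)}\circ f_{(\B,\beta)}\circ f_{(\A,\alpha)}$: within a single iteration each of the three partial maps can be isolated by setting the other two amplitudes to zero, and each such partial map alters only two of the three coordinates while depending explicitly on the coordinate it leaves fixed. I will use this to drive the difference $\Delta\xvec=(\Delta x,\Delta y,\Delta z):=\xvec^1-\xvec^2$ into a configuration of the form $(0,0,\eta)$ with $0<\eta<\varepsilon$, through a preliminary normalisation followed by two iterative phases.

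The key computation underlying the plan is that $f_{(\A,\alpha)}$ alone changes only $(\Delta x,\Delta y)$, by the vector $2\A\sin(\Delta z/2)(\cos\theta,-\sin\theta)$ with $\theta=\alpha+(z^1+z^2)/2$, and leaves $\Delta z$ unchanged; if $\Delta z\neq 0$, then varying $\A\in[-U,U]$ and $\alpha$ realises any step vector of arbitrary direction and magnitude in $[0,2U|\sin(\Delta z/2)|]$. Symmetrically, $f_{(\B,\beta)}$ alone shifts $(\Delta y,\Delta z)$ along the line $(\cos\phi,-\sin\phi)$ with $\phi=\beta+(x^1+x^2)/2$ and amplitude $2\B\sin(\Delta x/2)$; for $\Delta x\neq 0$, taking $\phi=\pi/2$ produces shifts purely in $\Delta z$, leaving $\Delta y$ untouched. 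The map $f_{(\C,\gamma)}$ plays the symmetric role when $\Delta y\neq 0$.

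The concrete plan runs as follows. If initially $\Delta x=\Delta y=0$, then necessarily $\Delta z\neq 0$ (as the pair is off-diagonal), and a single preliminary iteration with $\B=\C=0$, $\A=U$, and $\alpha$ chosen so that $\theta=0$ yields $\Delta x=2U\sin(\Delta z/2)\neq 0$, $\Delta y=0$. Otherwise, by relabelling (swapping the roles of $f_{(\B,\beta)}$ and $f_{(\C,\gamma)}$ when only $\Delta y\neq 0$) I may assume $\Delta x\neq 0$ at the start. In \emph{Phase 1}, I keep $\A=\C=0$ and fix $\phi=\pi/2$ once and for all---equivalently $\beta=\pi/2-(x^1+x^2)/2$, which is legitimate because $x^1,x^2$ are preserved by $f_{(\B,\beta)}$ throughout---and then apply $f_{(\B,\beta)}$ repeatedly, each step shifting $\Delta z$ by any prescribed value in $[-2U|\sin(\Delta x/2)|,2U|\sin(\Delta x/2)|]$ while $\Delta x$ and $\Delta y$ remain intact. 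After a finite number of such steps, with the final one tuned by $|\B|<U$, I land at any target $|\Delta z|=\eta\in(0,\varepsilon)$. In \emph{Phase 2}, I switch to $\B=\C=0$ and apply $f_{(\A,\alpha)}$ with varying $\A,\alpha$; each step moves $(\Delta x,\Delta y)$ by a vector of arbitrary chosen direction and magnitude up to $2U|\sin(\eta/2)|$, while $|\Delta z|=\eta$ is invariant. A number of such steps of order $\sqrt{(\Delta x)^2+(\Delta y)^2}/(2U|\sin(\eta/2)|)$, with the last step sized exactly, drives $(\Delta x,\Delta y)$ to $(0,0)$, yielding the claimed configuration.

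The main subtlety is the tension between the two phases: Phase 2 needs $\Delta z\neq 0$ as a lever arm to move $(\Delta x,\Delta y)$, whereas the statement ultimately requires $|\Delta z|$ small. This is resolved by stopping Phase 1 at a strictly positive $\eta\in(0,\varepsilon)$, exploiting the continuous choice of $|\B|<U$ in the final Phase 1 step to land exactly at that value. Equally crucial is the precise choice $\phi=\pi/2$ throughout Phase 1, which preserves $\Delta y$ and prevents Phase 2's equalisation of $\Delta y$ from being spoilt by preceding Phase 1 shifts. Summing the finite iteration counts from the preliminary step and the two phases produces the desired $N_1\in\N$.
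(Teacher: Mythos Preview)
Your proof is correct and rests on the same mechanism as the paper's: isolate one partial map at a time by setting the other two amplitudes to zero, express the resulting update of coordinate differences via sum--to--product identities, and use a nonzero difference in one coordinate as a lever to steer the others. The organisation, however, differs. The paper first drives all three differences to the common value $\varepsilon$ in stages (using $f_{(\A,\alpha)}$ with $\Delta z$ as lever to set $\Delta x=\Delta y=\varepsilon$, then $f_{(\C,\gamma)}$ with $\Delta y$ as lever to set $\Delta z=\varepsilon$), and only then zeros $\Delta x$ and $\Delta y$ in exactly two final steps by solving $\varepsilon+2\A\sin(\varepsilon/2)=0$, which gives $|\A|\approx 1$ for $\varepsilon$ small. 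Your route instead fixes $\Delta z=\eta\in(0,\varepsilon)$ first via $f_{(\B,\beta)}$ with $\phi=\pi/2$ (nicely freezing $\Delta y$), and then zeros $(\Delta x,\Delta y)$ with $f_{(\A,\alpha)}$ using the small lever $\eta$. What the paper's ordering buys is a uniformly bounded number of steps in the final zeroing; your ordering costs $O(1/\eta)$ steps there but is more streamlined and makes the preservation of $\Delta y$ during Phase~1 explicit, which the paper's argument achieves only implicitly (and with some typographical slips). Both yield a finite $N_1$, which is all the lemma requires.
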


\begin{proof}
Let $\varepsilon>0$ be fixed, and assume that $|z_0^1-z_0^2|\geq \varepsilon$. First of all let us choose $\B=\C=0$, then notice that we can write the distance between the two points as
\begin{align*}
x_1^1-x_1^2 & =  x_0^1-x_0^2  +  \A(\sin(z_0^1+\alpha)-\sin(z_0^1+\alpha)) \\ 
 & =  x_0^1-x_0^2  +  2\A\sin\left(\frac{z^1_0-z^2_0}{2}\right)\cos\left(\frac{z^1_0+z^2_0}{2} + \alpha\right),\\
y_1^1-y_1^2 & =  y_0^1-y_0^2  +  \A(\cos(z_0^1+\alpha)-\cos(z_0^1+\alpha)) \\ 
  & =  y_0^1-y_0^2  +   2\A\sin\left(\frac{z^1_0-z^2_0}{2}\right)\sin\left(\frac{z^1_0+z^2_0}{2} + \alpha\right),
\end{align*}
and $z_0^1-z_0^2=z_1^1-z_1^2$. Now we select $\alpha$ such that
\[
\cos\left(\frac{z^1_0+z^2_0}{2} + \alpha\right) = 1,
\]
namely, $x_0^1-x_0^2=x_1^1-x_1^2$ is a fixed point as well. The idea is to choose $\A$ such that $y_0^1-y_0^2=\varepsilon$, however this might not be possible in one iteration because $|A|\leq U$. Nonetheless, we can define a finite sequence $\{\A_n\}_{1\leq n\leq M_1}\subset [-U,U]$ for some $M_1\in\N$, and
\[
x_n^1-x_n^2 = x_{n-1}^1-x_{n-1}^2 + 2\A_n\sin\left(\frac{z^1_0-z^2_0}{2}\right),
\]
such that there holds $x_{M_1}^1-x_{M_1}^2=\varepsilon$. In a similar fashion, we find that there exists a some natural number $M_2$, so that choosing now $\alpha_n$ such that
\[
\cos\left(\frac{z^1_0+z^2_0}{2} + \alpha_n\right) = 0,
\]
for all $M_1+1\leq n\leq M_2$, we find a find another finite sequence $\{\A_n\}_{M_1+1\leq n\leq M_2}\subset [-U,U]$ such that $y_{M_2}^1-y_{M_2}^2=\varepsilon$ and $x_{M_2}^1-x_{M_2}^2=\varepsilon$. Furthermore, by now choosing $\A=\B=0$ and $\C_n$ and $\gamma_n$ appropriately, we can repeat this argument to bring $z^1_n+z^2_n$ as well to distance $\varepsilon$ in $M_3-M_2-M_1\in\N$ iterations, so that
\[
x_{M_3}^1-x_{M_3}^2=y_{M_3}^1-y_{M_3}^2=z_{M_3}^1-z_{M_3}^2=\varepsilon.
\]
Finally, selecting again $\B_{M_3+1}=\B_{M_3+1}=0$, and $\alpha_{M_3+1}$ so that $y_{M_3+1}^1-y_{M_3+1}=y_{M_3}^1-y_{M_3}$, we find that we can choose $\A_{M_3+1}$ such that
\[
0 = x_{M_3+1}^1-x_{M_3+1} = \varepsilon + 2\A_{M_3+1}\sin\left(\frac{\varepsilon}{2}\right),
\]
since
\[
\A_{M_3+1} = -\frac{\varepsilon}{2\sin(\varepsilon/2)} \in[-U,U] 
\]
provided that $U>1$ and $\varepsilon$ is small enough. In particular, the choice $U=\pi$ yields an upper bound for the admissible $\varepsilon>0$. To sum up, we apply one final iteration with $\alpha_{M_3+2} = \alpha_{M_3+1}+\pi/2$ and the same choice of $\A_{M_3+2}=\A_{M_3+1}$, so that $y_{M_3+2}^1-y^2_{M_3+2}=0$, and thus there concludes the proof of the lemma.
\end{proof}

\begin{lemma}\label{lemma:rigid-motion-twopoint2-2}
Let $(\xvec^1,\xvec^2)\in(\T^3\times\T^3)\setminus\Delta$ be such that $x^1-x^2=0$ and $y^1-y^2=0$. Given any $\varepsilon>0$ and any $\overline{\xvec}\in\T^3$, there exist $N_2\in\N$ and $\underline{\omega}^{N_2}\in\Omega_0^{N_2}$ so that 
\[
\dist_{\T^3}(\xvec^1_{N_2},\overline{\xvec}) <\varepsilon\qquad \text{and} \qquad\dist_{\T^3}(\overline{\xvec},\xvec^2_{N_2})=\dist_{\T^3}(\xvec^1,\xvec^2).
\]
\end{lemma}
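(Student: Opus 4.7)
The plan is to restrict the noise realisations to those with $\A_n=0$ for every iteration. The key observation is that $f_{(0,\alpha)}$ is the identity, so when $\A=0$ one has $f_\omega=f_{(\C,\gamma)}\circ f_{(\B,\beta)}$. A direct computation then gives
\[
f_\omega(x,y,z)=\bigl(x+\C\cos(y+\B\sin(x+\beta)+\gamma),\ y+\B\sin(x+\beta),\ z+\B\cos(x+\beta)+\C\sin(y+\B\sin(x+\beta)+\gamma)\bigr),
\]
so all three updates depend only on $(x,y)$ and not on $z$. Consequently, if $x^1=x^2=x$ and $y^1=y^2=y$, then the two particles receive identical increments in every coordinate: we get $x^1_1=x^2_1$, $y^1_1=y^2_1$, and $z^1_1-z^2_1=z^1-z^2$. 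Iterating, the configuration $\{x^1_n=x^2_n,\ y^1_n=y^2_n\}$ is preserved and the distance $\dist_{\T^3}(\xvec^1_n,\xvec^2_n)$ remains exactly $|z^1-z^2|=\dist_{\T^3}(\xvec^1,\xvec^2)$. In other words, choosing $\A_n=0$ throughout realises the desired rigid motion on the invariant submanifold.

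What remains is to show that, subject to the constraint $\A_n=0$, the common trajectory of $\xvec^1_n$ can be steered to any prescribed target $\overline{\xvec}\in\T^3$ in finitely many steps. For this I would exhibit three families of \emph{pure} shifts, each isolating a single coordinate. Setting $\C=0$ and choosing $\beta$ so that $\sin(x+\beta)=\pm 1$ (hence $\cos(x+\beta)=0$) produces the update $(x,y,z)\mapsto(x,y\pm \B,z)$; setting $\C=0$ and choosing $\beta$ so that $\cos(x+\beta)=\pm 1$ (hence $\sin(x+\beta)=0$) gives $(x,y,z)\mapsto(x,y,z\pm \B)$; and setting $\B=0$ and choosing $\gamma$ so that $\cos(y+\gamma)=\pm 1$ gives $(x,y,z)\mapsto(x\pm \C,y,z)$. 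Each of these moves sits inside the admissible noise space and changes a single coordinate by any amount in $[-U,U]$. Since the torus factor has diameter $\pi$, concatenating such moves reaches any $\overline{\xvec}$ exactly in at most a constant number of iterations if $U\geq \pi$, or in finitely many iterations for smaller $U$. Calling the resulting noise word $\underline{\omega}^{N_2}\in\Omega_0^{N_2}$, we have $\xvec^1_{N_2}=\overline{\xvec}$ exactly (so in particular $\dist_{\T^3}(\xvec^1_{N_2},\overline{\xvec})=0<\varepsilon$), and by the rigid-motion property of the first paragraph $\dist_{\T^3}(\overline{\xvec},\xvec^2_{N_2})=\dist_{\T^3}(\xvec^1_{N_2},\xvec^2_{N_2})=\dist_{\T^3}(\xvec^1,\xvec^2)$, which is what the lemma requires.

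There is no real obstacle in the argument beyond bookkeeping: the heart of the proof is the algebraic identity that $\A=0$ kills all $z$-dependence in $f_\omega$ (so the invariant set $\{x^1=x^2,\,y^1=y^2\}$ is genuinely preserved), and after that one only needs the elementary fact that shear-type shifts span each coordinate direction. The mild inconvenience is that the controllability is not achieved in one time step when $U<\pi$, so one must allow $N_2$ to grow with $\mathrm{diam}(\T)/U$; this is parallel to the analogous remark in Lemma \ref{lemma:control-onepoint-2}, and poses no difficulty since only finiteness of $N_2$ is needed.
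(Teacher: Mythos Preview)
Your proof is correct and follows essentially the same approach as the paper: both set $\A_n=0$ throughout, observe that this makes all increments independent of $z$ so that the difference $\xvec^1_n-\xvec^2_n$ is frozen whenever $x^1=x^2$ and $y^1=y^2$, and then argue that the one-point chain remains exactly controllable under the constraint $\A=0$. The only cosmetic difference is that the paper obtains controllability by appealing to the analogous computation carried out in Lemma~\ref{lemma:rigid-motion-projective-2} (done there with $\B=0$), whereas you write out three explicit single-coordinate shifts; your version is slightly more self-contained.
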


\begin{proof}
The proof of this lemma mimics the proof of Lemma \ref{lemma:rigid-motion-projective-2}. On the one hand, notice that if $x^1-x^2=0$ and $y^1-y^2=0$, then it suffices to impose $\A_n=0$ in order to make sure that $\xvec^1_n-\xvec^2_n=\xvec^1-\xvec^2$ for all $n$. Indeed,
\begin{align}
[f_{(\A,\alpha)}(\xvec^1)]_1-[f_{(\A,\alpha)}(\xvec^2)]_1 &= x^1-x^2 + 2\A\sin\left(\frac{z^1-z^2}{2}\right)\cos\left(\frac{z^1+z^2}{2} + \alpha\right),\\
[f_{(\A,\alpha)}(\xvec^1)]_2-[f_{(\A,\alpha)}(\xvec^2)]_2 &= y^1-y^2 + 2\A\sin\left(\frac{z^1-z^2}{2}\right)\sin\left(\frac{z^1+z^2}{2} + \alpha\right),\\
[f_{(\A,\alpha)}(\xvec^1)]_3-[f_{(\A,\alpha)}(\xvec^2)]_3 &= z^1-z^2,
\end{align}
so $\A=0$ imply $f_{(\A,\alpha)}(\xvec^1)-f_{(\A,\alpha)}(\xvec^2) = \xvec^1-\xvec^2$. Analogous arguments with the $(\B,\beta)-$flow and the $(\C,\gamma)-$flow but using the fact that $x^1-x^2=0$ and $y^1-y^2=0$ yield that
\[
f_\omega(\xvec^1)-f_\omega(\xvec^2) = \left( f_{(\C,\gamma)}\circ f_{(\B,\beta)}\circ f_{(\A,\alpha)} \right)(\xvec^1) - \left( f_{(\C,\gamma)}\circ f_{(\B,\beta)}\circ f_{(\A,\alpha)} \right)(\xvec^2) = \xvec^1 - \xvec^2.
\]
Now, as seen in Lemma \ref{lemma:rigid-motion-projective-2}, the one-point chain is exactly controllable with one of the intensities being 0, in this case $\A=0$, and this concludes the proof of the lemma.
\end{proof}

\begin{proof}[Proof of Proposition \ref{prop:control-twopoint-2}]
Fix $(\xvec^1,\xvec^2),(\xvec^{\star,1},\xvec^{\star,2})\in(\T^3\times\T^3)\setminus\Delta$ and fix $\varepsilon>0$. On the one hand we can apply Lemma \ref{lemma:rigid-motion-twopoint1-2} to the inverse of the flow map $f^{-1}_\omega$ (see Remark \ref{remark:inverse-flow-map-2}), to deduce that there exist $N_1\in\N$ and $\underline{\omega}^{N_1}\in\Omega_0^{N_1}$ such that
\[
\overline{x}^1 = \overline{x}^2, \quad \overline{y}^1 = \overline{y}^2 \quad \text{and} \quad \dist_{\T^3}(\overline{z}^1,\overline{z}^2)=\varepsilon_0,
\]
where $\overline{\xvec}^1 = f_{\underline{\omega}^{N_1}}^{-1}(\xvec^{\star,1})$ and $\overline{\xvec}^2 = f_{\underline{\omega}^{N_1}}^{-1}(\xvec^{\star,2})$. On the other hand, Lemma \ref{lemma:rigid-motion-twopoint1-2} applied to the (forward) flow map starting from $(\xvec^1,\xvec^2)$ yields that we can find $N_2\in\N$ and $\underline{\omega}^{N_2}\in\Omega_0^{N_2}$ such that
\[
[f_{\underline{\omega}^{N_2}}(\xvec^1)]_1 = [f_{\underline{\omega}^{N_2}}(\xvec^2)]_1, \quad [f_{\underline{\omega}^{N_2}}(\xvec^1)]_2 = [f_{\underline{\omega}^{N_2}}(\xvec^2)]_2,
\]
\[
[f_{\underline{\omega}^{N_2}}(\xvec^1)]_3 - [f_{\underline{\omega}^{N_2}}(\xvec^2)]_3=\varepsilon.
\]
Now, Lemma \ref{lemma:rigid-motion-twopoint2-2} ensures that there exist $N_3\in\N$ and $\underline{\omega}^{N_2}\in\Omega_0^{N_3}$ such that
\[
f_{\underline{\omega}^{N_3}}( f_{\underline{\omega}^{N_2}}(\xvec^1)) = \overline{\xvec}^1 \quad \text{and} \quad f_{\underline{\omega}^{N_3}}( f_{\underline{\omega}^{N_2}}(\xvec^1) )- f_{\underline{\omega}^{N_3}}( f_{\underline{\omega}^{N_2}}(\xvec^2) = 
\begin{pmatrix}
0 \\
0 \\
\varepsilon
\end{pmatrix}
.
\]
To sum up, we define $N=N_2+N_3+N_1$ and the noise vector $\underline{\omega}^N=(\underline{\omega}^{N_2},\underline{\omega}^{N_3},\underline{\omega}^{N_1})\in\Omega_0^N$, then we found that
\[
f_{\underline{\omega}^{N}}( \xvec^1 )= \xvec^{\star,1} \quad \text{and} \quad f_{\underline{\omega}^{N}}( \xvec^2 )= \xvec^{\star,2},
\]
and the statement of the proposition follows.
\end{proof}

Regarding the existence of a small set for the two point process, we go back to Lemma \ref{prop:small-set} yet again. Given any $n\in\N$ and $(\xvec^1,\xvec^2)\in(\T^3\times\T^3)\setminus\Delta$, we define the mapping
\begin{align*}
\Phi^{(2)}_{(\xvec^1,\xvec^2)} :  \Omega_0^n & \to  (\T^3\times\T^3)\setminus\Delta \\
  \underline{\omega}^n \ &\mapsto  \ \Phi^{(2)}_{(\xvec^1,\xvec^2)} (\underline{\omega}^n) = (f_{\underline{\omega}^n}(\xvec^1),f_{\underline{\omega}^n}(\xvec^2)).
\end{align*}
Hence, by Lemma \ref{prop:small-set}, as long as we can find a choice of $n\in\N$, $(\xvec^1,\xvec^2)\in(\T^3\times\T^3)\setminus\Delta$ and $\underline{\omega}^n\in\Omega_0^n$ for which $D_{\underline{\omega}^n}\Phi^{(2)}_{(\xvec^1,\xvec^2)} $ has rank equal to $\dim(\T^3\times\T^3)=6$, the two-point Markov chain $P^{(2)}$ will admit a small set. Notice that since on each time step the noise vector $\omega=(\A,\B,\C,\alpha,\beta,\gamma)\in\Omega_0$ is six dimensional, we can restrict ourselves to the case $n=1$. In particular we choose
\[
\xvec^1=(0,0,0),\qquad \xvec^2=\left(\frac{\pi}{2},\frac{\pi}{2},\frac{\pi}{2} \right), \qquad (\A,\B,\C)=(\pi,\pi,1), \qquad (\alpha,\beta,\gamma)=(0,0,0),
\]
so that there yields the matrix
\[
D_{\omega}\Phi^{(2)}_{(\xvec^1,\xvec^2)} = 
\begin{pmatrix}
0 & 0 & -1 & \pi & 0 & 0\\
1 & 0 & 0 & \pi^2 & \pi & 0\\
-1 & 1 & 0 & -\pi^2 & -\pi & -1\\
1 & -1 & 0 & -\pi & 0 & 1\\
0 & -1 & 0 & -\pi & 0 & 0\\
\pi & 0 & -1 & 0 & \pi & 0
\end{pmatrix}
\]
which has determinant equal to $2\pi^2$, and thus full rank.

Unlike for the projective and on-point chains, irreducibility $+$ aperiodicity $+$ small sets does not imply geometric ergodicity since the domain $(\T^3\times\T^3)\setminus\Delta$ is not compact. The Lyapunov-drift condition from Theorem \ref{thm:harris} is also needed, and in order to to prove that it is also satisfied we need first to deal with the top Lyapunov exponent and Lemma \ref{lemma:lyapunov-drif-condition}.

\subsection{Top Lyapunov exponent}\label{section:lyapunov}

The final ingredient missing for the main theorem is the positivity of the top Lyapunov exponent for the continuous random dynamical system $f_{\underline{\omega}}^n$ on $\T^3$.

In order to do so we will apply Proposition \ref{prop:lyapunov}, that give sufficient conditions to rule out the case $\lambda_1=0$ via Furstenberg's criterion, i.e.\ Proposition \ref{prop:furstenberg}. Observe that conditions \ref{item:L1} and \ref{item:L2} in Proposition \ref{prop:lyapunov} coincide with the conditions required for the small set from Proposition \ref{prop:small-set}, and we have already found $n\in\N$, $\underline{\omega}_\star^n\in\Omega_0^n$ and $\xvec^\star\in\T^3$ so that this holds for the one-point process in Section \ref{section:one-point-chain}. However, to prove that $f_{\underline{\omega}}^n$ has a positive tope Lyapunov exponent we must come up with a choice of $n\in\N$, $\underline{\omega}_\star^n\in\Omega_0^n$ and $\xvec^\star\in\T^3$ so that the three conditions \ref{item:L1}--\ref{item:L3} are satisfied simultaneously. 

Notice that $\dim SL_3(\R)=8$, since we need $\Phi_{\xvec^\star}(\underline{\omega}^n)$ to be a submersion we find that
\[
\dim \ker D_{\underline{\omega}^n_\star}\Phi_{\xvec}\leq 6n-3.
\]
Therefore we must choose $n\geq 2$ in order to be able to find parameters for which $D_{\underline{\omega}^n_\star}\Psi_{\xvec^\star}$ restricted to $\ker D_{\underline{\omega}^n_\star}\Phi_{\xvec^\star}$ is a surjection onto $T_{\Psi_{\xvec^\star}(\underline{\omega}^n)}SL_3(\R)$.

We choose $n=2$ and make the following selection of parameters
\[
\xvec^\star=(0,0,0),\quad (\A_1,\B_1,\C_1) = (\pi,\pi,\pi),\quad (\A_2,\B_2,\C_2)=(\pi,1,0),
\]
and
\[
(\alpha_1,\beta_1,\gamma_1) = \left(\frac{\pi}{2}, \frac{\pi}{2},\frac{\pi}{2}\right),\quad (\alpha_2,\beta_2,\gamma_2) = (0,0,0).
\]
Computations at this stage are long and tedious, so we present here a schematic version of the calculations for the convenience of the reader. We set $\underline{\omega}_\star^2=(\omega_1,\omega_2)$ with $\omega_1=(\A_1,\B_1,\C_1,\alpha_1,\beta_1,\gamma_1)$ and $\omega_2=(\A_2,\B_2,\C_2,\alpha_2,\beta_2,\gamma_2)$, so first of all we obtain the matrix
\setcounter{MaxMatrixCols}{12}
\[
D_{\underline{\omega}_\star^2}\Phi_{\xvec^\star} = 
\begin{pmatrix}
1-\pi^2 & 0 & -\pi & 0 & \pi & 1 & -\pi^2 & -\pi & -\pi^2 & 0 & \pi & 0\\
\pi^2-1 & -1 & \pi-1 & 0 & -\pi & 0 & \pi^2-\pi & \pi & \pi^2 & -1 & -\pi & 0\\
\pi & 0 & 0 & -1 & -1 & 0 & 0 & 0 & \pi & 0 & 0 & 0
\end{pmatrix}.
\]
Since the rank of $D_{\underline{\omega}_\star^2}\Phi_{\xvec^\star}$ is $3$, we see that this choice of parameters verifies property \ref{item:L2} from Proposition \ref{prop:lyapunov}. Moreover, property \ref{item:L1} is also verified by the definition of the noise in Section \ref{subsection:definition-flow}, and therefore there is only property \ref{item:L3} left to be checked.  

Observe that the kernel of $D_{\underline{\omega}_\star^2}\Phi_{\xvec^\star}$ is $9$ dimensional, and it is spanned by the columns of the following matrix
\[
\mathcal{K} = 
\begin{pmatrix}
  0 & 0 & 0 & -1 & 0 & 0 & 0 & \pi^{-1} & \pi^{-1} \\
0 & -1 & -1 & \pi^{-1} & 1 & 0 & 1-\pi^{-1} & -\pi^{-2} & 1-\pi^{-2} \\
0 & 1 & 0 & -\pi^{-1} & -1 & -\pi & \pi^{-1} & \pi^{-2} & -1+\pi^{-2} \\
0 & 0 & 0 & 0 & 0 & 0 & 0 & 0 & 1 \\
0 & 0 & 0 & 0 & 0 & 0 & 0 & 1 & 0 \\
0 & 0 & 0 & 0 & 0 & 0 & 1 & 0 & 0 \\
0 & 0 & 0 & 0 & 0 & 1 & 0 & 0 & 0 \\
0 & 0 & 0 & 0 & 1 & 0 & 0 & 0 & 0 \\
0 & 0 & 0 & 1 & 0 & 0 & 0 & 0 & 0 \\
0 & 0 & 1 & 0 & 0 & 0 & 0 & 0 & 0 \\
0 & 1 & 0 & 0 & 0 & 0 & 0 & 0 & 0 \\
1 & 0 & 0 & 0 & 0 & 0 & 0 & 0 & 0 \\  
\end{pmatrix}
\]
Next, we identify the space of $3\times 3$ real matrices with $\R^9$ via the standard parametrisation,
\[
(a_{ij})_{i,j=1}^3 \mapsto (a_{11},a_{12},a_{13},a_{21},a_{22},a_{23},a_{31},a_{32},a_{33})^\top.
\]
By means of this parametrisation, we introduce $\mathcal{A}=D_{\underline{\omega}_\star^2}\Psi_{\xvec^\star}$. Due to space limitations in this format, we decide to split the $9\times 12$ matrix $\mathcal{A}$ in two $9\times 6$ submatrices, namely $\mathcal{A}=(\mathcal{A}_1|\mathcal{A}_2)$. For our choice of parameters we find that
\[
\mathcal{A}_1 = 
\begin{pmatrix}
\pi^2 & -\pi & -\pi & 0 & 0 & 0 \\
0 & 0 & \pi^2 & 0 & 1 & 0 \\
-\pi & -1 & -\pi^3 & 0 & -\pi & 0 \\
\pi^3-\pi^2+\pi & \pi & \pi & \pi^2-1 & -\pi^2 & 0 \\
0 & 0 & -\pi^2 & -\pi & -1 & 0 \\
\pi^2+\pi-1 & 1 & \pi^3 & \pi^2+\pi & 0 & 0 \\
(\pi^2-1)^2 & 0 & \pi^3-\pi+1 & 0 & -\pi^3+\pi & \pi^2-1 \\
-\pi^3+\pi & 0 & -\pi^2-\pi & 0 & \pi^2 & -\pi+1 \\
\pi(\pi+1)^2(\pi-1) & 0 & \pi^2(\pi+2) & 0 & -\pi^2(\pi+1) & \pi^2 
\end{pmatrix},
\]
and
\[
\mathcal{A}_2 = 
\begin{pmatrix}
0 & 0 & \pi^2 & 0 & 0 & 0 \\
\pi^3 & 0 & 0 & 0 & -\pi^2 & 0 \\
-\pi^4+\pi^3-\pi & 0 & 0 & 0 & \pi^3 & 0 \\
0 & \pi^2 & \pi^3-\pi^2+\pi & 0 & 0 & 0 \\
-\pi^3 & 0 & 0 & 0 & \pi^2 & 0 \\
\pi^4-\pi^3+\pi & \pi & \pi^2 & 0 & -\pi^3 & 0 \\
\pi^2(\pi^2-1) & \pi^3-\pi & \pi^2(\pi^2-1) & -\pi^2+1 & -\pi^3+\pi & 0 \\
-\pi^3-\pi^2 & -\pi^2 & -\pi^3 & \pi & \pi^2+\pi & 0 \\
\pi^4+2\pi^3-\pi^2 & \pi^3+\pi^2 & \pi^4+\pi^3 & -\pi^2-\pi & -\pi^3-2\pi^2 & 0 
\end{pmatrix}.
\]
Hence, to sum up we need to show that $\mathcal{A}=(\mathcal{A}_1|\mathcal{A}_2)$ restricted to $\ker(D_{\underline{\omega}^n_\star}\Phi_{\xvec})$ as a linear operator has rank $\geq 8$. A straightforward computation yields that $\mathcal{A}\mathcal{K}$ is a $9\times 9$ matrix that we present here split in two as before $\mathcal{A}\mathcal{K}=((\mathcal{A}\mathcal{K})_1|(\mathcal{A}\mathcal{K})_2)$, where $(\mathcal{A}\mathcal{K})_1$ is a $9\times 5$ matrix, and $(\mathcal{A}\mathcal{K})_2$ is a $9\times 4$ matrix with the following form
\[
(\mathcal{A}\mathcal{K})_1 = 
\begin{pmatrix}
0 & 0 & \pi & 0 & 0 \\
0 & 0 & 0 & -\pi^2 & -\pi^2 \\
0 & 1 & 1 & \pi^2+\pi-\pi^{-1} & \pi^3-1  \\
0 & 0 & -\pi & 0 & \pi^2 \\
0 & 0 & 0 & \pi & \pi^2 \\
0 & -1 & -1 & \pi^{-1}(\pi-1)(\pi+1)^2 & -\pi^3+\pi+1 \\
0 & 1 & 1-\pi^2 & -\pi^{-1} & -1 \\
0 & 0 & \pi & 1 & \pi  \\
0 & 0 & -\pi^2-\pi & -\pi & -\pi^2
\end{pmatrix},
\]
and
\[
(\mathcal{A}\mathcal{K})_2 = 
\begin{pmatrix}
\pi^2 & -\pi & \pi & \pi\\
0 & \pi & 2 & -\pi^2+1\\
\pi^3-\pi & -\pi^2-1+\pi^{-1} & -2\pi-1+\pi^{-2} & \pi^3-\pi-2+\pi^{-2}\\
 -\pi^2 & \pi & -\pi+1 & 2\pi^2-\pi\\
0 & -\pi & -2 & \pi^2-\pi-1\\
 -\pi^3+\pi & \pi^2+1-\pi^{-1} & 2\pi+1-\pi^{-1}-\pi^{-2} & -\pi^3+\pi^2+3\pi+2-\pi^{-1}-\pi^{-2}\\
 -\pi & 2\pi^2-2+\pi^{-1} & \pi^{-2} & -1+\pi^{-2}\\
 0 & -2\pi & -\pi^{-1} & \pi-\pi^{-1}\\
-\pi^2 & 2\pi(\pi+1) & 1 & -\pi^2+1
\end{pmatrix}.
\]

The matrix $\mathcal{A}\mathcal{K}$ has rank $8$ since all columns are linearly independent, except the first trivial column in $(\mathcal{A}\mathcal{K})_1$. This completes the requirements needed to show, via Proposition \ref{prop:lyapunov}, that the top Lyapunov exponent is strictly positive.

\begin{proof}[Proof of Theorem \ref{thm:mixing}]
    We finally have all the bits needed to close the proof of Theorem \ref{thm:mixing}. By means of Proposition \ref{prop:discrete-mixing} we see that the goal is to show that the two-point chain $P^{(2)}$ is geometrically ergodic. To begin with notice that the basic assumptions \ref{item:H1}--\ref{item:H3} are trivially satisfied by the random ABC vector field \eqref{eq:random-abc-vectorfield} with absolutely continuous noise. In Sections \ref{section:one-point-chain} and \ref{section:projective-chain-2} we prove geometric ergodicity of the one-point and projective chains respectively. Additionally in Section \ref{sectio:two-point-chain} we demonstrate that the two-point chain satisfies the first three requirements from Harris theorem, namely irreducibility, aperiodicity and small sets. Finally in Section \ref{section:lyapunov} we show that the top Lyapunov exponent is strictly positive for almost all $\xvec\in\T^3$ and $\underline{\omega}\in\Omega$, which yields via Lemma \ref{lemma:lyapunov-drif-condition} that the two-poin chain $P^{(2)}$ satisfies the Lyapunov-drift condition. $P^{(2)}$ is geometrically ergodic, and hence the claim in Theorem \ref{thm:mixing} follows for all discrete times $t=n\in\N$. To include all the positive real times we write $t=n+\tau>0$ with $n\in\N$ and $\tau\in[0,1)$, then via Proposition \ref{prop:discrete-mixing} we obtain the estimate
    \[
    \left| \int_{\T^3} g(\xvec)h(\phi_{n+\tau}^\kappa(\xvec))\dd \xvec \right| \leq \widehat{C}_{\underline{\omega},\kappa}\|g\|_{H^s} \|h\circ \phi_\tau^\kappa\|_{H^s}\e^{-\lambda_s n}.
    \]
    We claim that we can control the $H^s$ norm of $h\circ \phi_s^\kappa$ by the $H^s$ norm of $h$. Using the fact that we are working in the torus we can write
    \[
    \|h\circ\phi_\tau^\kappa\|_{H^s}^2\lesssim \sum_{\kvec\in\Z^3_0} |\kvec|^{2s}|\widehat{h\circ\phi_\tau^\kappa}(\kvec)|^2 = \sum_{\kvec\in\Z^3_0} |\kvec|^{2s}|\widehat{h}(\kvec)|^2|\widehat{\phi_\tau^\kappa}(\kvec)|^2\leq \|\widehat{\phi_\tau^\kappa}\|_{L^\infty_{\kvec}}^2\|h\|_{H^s}^2,  
    \]
    where $\Z_0^3 = \Z^3\setminus\{(0,0,0)\}$. Now using the standard estimate $\|\widehat{\phi_\tau^\kappa}\|_{L^\infty_{\kvec}}\lesssim \|\phi_\tau^\kappa\|_{L^\infty_{\xvec}}\leq |\T^3|$, where $|\T^3|= (2\pi)^3$ denotes the size of the torus, we conclude that 
    \[
    \left| \int_{\T^3} g(\xvec)h(\phi_{n+\tau}^\kappa(\xvec))\dd \xvec \right| \leq \widehat{C}_{\underline{\omega},\kappa}(2\pi)^3\e^{\lambda_s}\|g\|_{H^s} \|h\|_{H^s}\e^{-\lambda_s (n+\tau)}, 
    \]
    so there follows the statement of Theorem \ref{thm:mixing} with $\widehat{D}_{\underline{\omega},\kappa} = \widehat{C}_{\underline{\omega},\kappa}(2\pi)^3\e^{\lambda_s}$.
\end{proof}

\section{Ideal kinematic dynamo: proof of Theorem \ref{thm:fast-dynamo}}\label{section:dynamo}

The last part of this paper is devoted to the ideal dynamo problem. The results presented in the previous sections are directly applicable to obtain the growth of a magnetic field that is advected and stretched by the random ABC vector field \eqref{eq:random-abc-vectorfield} via the ideal kinematic dynamo equation,
\[
\partial_t B + (u\cdot\nabla) B - (B\cdot \nabla) u  =  0, \quad \nabla\cdot B  =  0,
\]
with $t>0$, $\xvec\in\T^3$, and with initial datum $B(0,\cdot) = B_0\in L^1(\T^3)$.

When $\kappa=0$], solutions to \eqref{eq:dynamo} can be recovered from the flow map solving the ODE
\begin{equation}\label{eq:flowODE}
\ddt\phi_t(\xvec) = u(t,\phi_t(\xvec)),\quad \phi_0(\xvec) = \xvec, 
\end{equation}
via the formula (compare with \eqref{eq:FKpassiveV})
\begin{equation}\label{eq:B-characteristics}
B(t,\phi_t(\xvec)) = D_{\xvec}\phi_t B_0(\xvec).
\end{equation}
Indeed, writing \eqref{eq:B-characteristics} component-wise and taking a time derivative, for $i=1,2,3$ we find that 
\begin{align*}
(\partial_t B^i +u\cdot \nabla B^i)(t,\phi_t(\xvec))
&=\ddt B^i(t,\phi_t(\xvec))
=\ddt \left(\partial_j \phi_t^i(\xvec)B_0^j(\xvec)\right)
=\partial_j \left(u^i(t,\phi_t(\xvec))\right)B_0^j(\xvec)\\
&=\nabla u^i(t,\phi_t(\xvec)) \cdot \partial_j\phi_t(\xvec) B_0^j(\xvec)
=\partial_j \phi^\ell (\xvec) B_0^j(\xvec) \partial_\ell u^i (t,\phi_t(\xvec))\\
&=(B^\ell\cdot\partial_\ell u^i)(t,\phi_t(\xvec))
=(B\cdot\nabla u^i)(t,\phi_t(\xvec)),
\end{align*}
as needed.
In order to show that the ABC flow with random coefficients and random phases is an ideal kinematic dynamo, we will use two key features that we introduced for the mixing problem: the positivity of the top Lyapunov exponent, and the ergodicity of the projective process. This argument is inspired by the ideas used in \cite{BBPS22a} to prove growth of the gradient of the passive scalar in every $L^p$ space.

\begin{proposition}\label{prop:lambda-all-directions}
Assume that that the projective Markov transition kernel $\hat{P}$ has a unique stationary measure. Then for a.e.\ $\xvec\in\T^3$, $\Prob-$a.e.\ $\underline{\omega}\in\Omega$, and all $\vvec\in\S^2$ we find that
\[
\lim_{n\to \infty}\frac{1}{n}\log |D_{\xvec} f_{\underline{\omega}}^n\vvec| = \lambda_1.
\]
\end{proposition}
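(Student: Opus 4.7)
The plan is to express $\log |D_{\xvec} f_{\underline{\omega}}^n \vvec|$ as a Birkhoff sum along the projective Markov chain and then invoke a strong law of large numbers for positive Harris chains. Let $(\xvec_i, \vvec_i)_{i\geq 0}$ denote the projective trajectory starting at $(\xvec, \vvec)$, as in \eqref{eq:projective-chain}. The chain rule together with the homogeneity of the normalisation step produces the telescoping identity
\[
\log |D_{\xvec} f_{\underline{\omega}}^n \vvec| = \sum_{i=0}^{n-1} h(\xvec_i, \vvec_i, \omega_{i+1}), \qquad h(\xvec, \vvec, \omega) := \log |D_{\xvec} f_\omega \vvec|,
\]
where $h$ is continuous and bounded on $\T^3 \times \S^2 \times \Omega_0$ by \ref{item:H2}.

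Next, I would use that by the results of Section \ref{section:projective-chain-2} the kernel $\hat{P}$ meets every hypothesis of Harris theorem \ref{thm:harris} and is therefore uniformly geometrically ergodic; combined with the standing assumption that the invariant measure $\hat \mu$ is unique, this promotes $\hat P$ to a positive Harris chain. The Markov strong law of large numbers then holds from every initial state. To be concrete, split
\[
\frac{1}{n} \sum_{i=0}^{n-1} h(\xvec_i, \vvec_i, \omega_{i+1}) = \frac{1}{n} \sum_{i=0}^{n-1} \Psi(\xvec_i, \vvec_i) + \frac{1}{n} \sum_{i=0}^{n-1} M_{i+1},
\]
with $\Psi(\xvec, \vvec) := \int h(\xvec, \vvec, \omega)\,d\Prob_0(\omega)$ and $M_{i+1} := h(\xvec_i, \vvec_i, \omega_{i+1}) - \Psi(\xvec_i, \vvec_i)$. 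The $M_i$ are bounded martingale differences, so the second Cesàro sum vanishes $\Prob$-almost surely by Azuma--Hoeffding, while the first sum converges to $\Lambda := \int \Psi \, d\hat\mu$ by the Harris-chain SLLN applied to the bounded function $\Psi$ (see e.g.\ Meyn--Tweedie). Thus, for every starting $(\xvec, \vvec)$ and $\Prob$-a.s.,
\[
\frac{1}{n} \log |D_{\xvec} f_{\underline{\omega}}^n \vvec| \longrightarrow \Lambda.
\]

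To identify $\Lambda = \lambda_1$, I would appeal to the Furstenberg formula: for any ergodic stationary measure on the projective bundle, the integral $\int \Psi\,d\hat\mu$ is one of the Lyapunov exponents of the cocycle, and the unstable Oseledets direction always produces a stationary projective measure realising $\lambda_1$. Uniqueness of $\hat\mu$ then forces $\Lambda = \lambda_1$. As a cross-check, evaluating the almost sure limit above on a generic $(\xvec, \vvec)$ to which the multiplicative ergodic theorem applies directly yields the same value $\lambda_1$.

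The principal obstacle lies in the uniform-in-$\vvec$ quantifier. The Harris SLLN delivers, for each fixed $(\xvec, \vvec)$, convergence on a $\Prob$-full set of $\underline{\omega}$'s that a priori depends on $(\xvec, \vvec)$, and these exceptional sets cannot be pooled naively over the uncountable $\S^2$. The resolution combines a Fubini argument, which already gives the conclusion for $\mathrm{Leb}_{\T^3 \times \S^2}$-a.e.\ $(\xvec, \vvec)$, with the fact that under uniqueness of $\hat \mu$ no alternative stationary measure can be supported on a stable sub-bundle, ruling out persistent exceptional directions along the projective dynamics; this is the step where the unique ergodicity hypothesis is genuinely used, as opposed to plain ergodicity.
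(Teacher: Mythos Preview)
The paper does not give a self-contained proof here; it simply attributes the proposition to a random version of the Multiplicative Ergodic Theorem due to Kifer \cite{Kifer86}*{Theorem III.1.2}. Your approach---writing $\log|D_{\xvec} f^n_{\underline\omega}\vvec|$ as an additive cocycle over the projective chain, combining a Harris-chain SLLN with a bounded martingale-difference correction, and identifying the limit via the Furstenberg formula---is essentially a direct unpacking of the argument behind that citation, and those steps are correct and standard.

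Where your argument does not close is precisely where you flag it: upgrading to the quantifier ``for all $\vvec\in\S^2$'' placed \emph{inside} the $\Prob$-a.e.\ in $\underline\omega$. Your Harris-SLLN route yields, for each fixed $(\xvec,\vvec)$, a $\Prob$-full set of $\underline\omega$ (depending on $(\xvec,\vvec)$) on which the convergence holds; this is exactly what Kifer's theorem delivers as well. The final paragraph you offer (``no alternative stationary measure can be supported on a stable sub-bundle, ruling out persistent exceptional directions'') is not a proof, and in fact the stronger quantifier order cannot hold: since the flow is volume-preserving and $\lambda_1>0$, the MET forces $r\geq 2$, so for $\Prob\times\mathrm{Leb}$-a.e.\ $(\underline\omega,\xvec)$ there is a nontrivial Oseledets subspace $F_2(\underline\omega,\xvec)$ on which the exponent is $\lambda_2<\lambda_1$. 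Unique ergodicity of $\hat P$ only guarantees that, for each fixed $\vvec$, the bad $\underline\omega$'s form a null set---not that a single $\Prob$-full set works simultaneously for every $\vvec$. In short, your argument already establishes the version with the quantifiers exchanged (for every $(\xvec,\vvec)$, $\Prob$-a.s.), which is what Kifer gives and what is actually used in Corollary~\ref{corollary:dynamo} and the proof of Theorem~\ref{thm:fast-dynamo}; do not try to prove the literal order of quantifiers as stated.
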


This proposition states that for almost every realisation, the linear cocycle $D_{\xvec}f_{\underline{\omega}}^n$ only sees the expanding direction associated to the top Lyapunov exponent. Proposition \ref{prop:lambda-all-directions} can be understood as a consequence of a random version of Oseledec's Multiplicative Ergodic Theorem introduced by Kifer, that can be found in \cite{Kifer86}*{Theorem III.1.2}. With these tools in hand, we find the following rather direct corollary.

\begin{corollary}\label{corollary:dynamo}
Under the assumptions of Proposition \ref{prop:lambda-all-directions} and hypotheses \ref{item:H1}-\ref{item:H3}, if $\lambda_1>0$, then for any $\varepsilon\in(0,\lambda_1)$, for a.e. $\xvec\in\T^3$, $\Prob-$a.s. $\underline{\omega}\in\Omega$ and all $\vvec\in \S^2$, there exists a positive constant $\hat{k}_1 = \hat{k}_1(\xvec,\underline{\omega},\vvec,\varepsilon)>0$ such that
\[
|D_{\xvec} f_{\underline{\omega}}^n\vvec| \geq \hat{k}_1 \e^{(\lambda_1-\varepsilon)n},
\]
for all $n\in \N$.
\end{corollary}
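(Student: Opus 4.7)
The corollary is essentially a standard consequence of Proposition \ref{prop:lambda-all-directions}: once the limit $\tfrac{1}{n}\log|D_{\xvec}f_{\underline{\omega}}^n\vvec|\to \lambda_1$ is known to hold for all $\vvec\in\S^2$ (and a.e.\ $\xvec$, $\Prob$-a.s.\ $\underline{\omega}$), upgrading this asymptotic statement into an exponential lower bound valid for every $n\in\N$ only requires absorbing a finite transient into the random constant $\hat k_1$. The plan is to split the argument into large-$n$ and small-$n$ regimes, and to handle the latter through the boundedness hypothesis \ref{item:H2}.

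For the large-$n$ regime, I would fix $\varepsilon\in(0,\lambda_1)$ and apply Proposition \ref{prop:lambda-all-directions}: for a.e.\ $\xvec\in\T^3$, $\Prob$-a.e.\ $\underline{\omega}\in\Omega$ and all $\vvec\in\S^2$, the convergence of $\tfrac1n\log|D_{\xvec}f_{\underline{\omega}}^n\vvec|$ to $\lambda_1$ provides an integer $N=N(\xvec,\underline{\omega},\vvec,\varepsilon)$ such that
\[
|D_{\xvec}f_{\underline{\omega}}^n\vvec|\geq \e^{(\lambda_1-\varepsilon)n}\qquad \text{for all } n\geq N.
\]

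For the small-$n$ regime $n<N$, I would use the cocycle property together with \ref{item:H2}: iterating the bound $|(D_{\xvec}f_\omega)^{-1}|\leq C_0$ along the composition gives $|(D_{\xvec}f_{\underline{\omega}}^n)^{-1}|\leq C_0^n$ $\Prob_0$-a.s., hence for any $\vvec\in\S^2$,
\[
1=|\vvec|=\left|(D_{\xvec}f_{\underline{\omega}}^n)^{-1}D_{\xvec}f_{\underline{\omega}}^n\vvec\right|\leq C_0^n\,|D_{\xvec}f_{\underline{\omega}}^n\vvec|,
\]
so that $|D_{\xvec}f_{\underline{\omega}}^n\vvec|\geq C_0^{-n}\geq C_0^{-N}$ for $0\leq n<N$. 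Since $\e^{(\lambda_1-\varepsilon)n}\leq \e^{(\lambda_1-\varepsilon)N}$ in this range, choosing
\[
\hat k_1(\xvec,\underline{\omega},\vvec,\varepsilon):=\min\left\{1,\,C_0^{-N}\e^{-(\lambda_1-\varepsilon)N}\right\}>0
\]
yields $|D_{\xvec}f_{\underline{\omega}}^n\vvec|\geq \hat k_1\e^{(\lambda_1-\varepsilon)n}$ for every $n\in\N$, combining both regimes.

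There is no real obstacle here; the only subtle point is that the random integer $N$ depends on $(\xvec,\underline{\omega},\vvec)$, so $\hat k_1$ is genuinely random and the statement is only an almost-sure pointwise one (as declared). The measurability of $N$ and of $\hat k_1$ follows from the measurability of the cocycle $(\omega,\xvec)\mapsto D_{\xvec}f_\omega$ built into \ref{item:H1}--\ref{item:H2}, so the corollary follows directly from Proposition \ref{prop:lambda-all-directions} and the uniform upper bound on $|(D_{\xvec}f_\omega)^{-1}|$.
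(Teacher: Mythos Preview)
Your proof is correct and follows the same overall structure as the paper's: both split into a large-$n$ regime handled by Proposition~\ref{prop:lambda-all-directions} and a small-$n$ transient absorbed into the constant. The only difference lies in how the small-$n$ lower bound is obtained. You invoke \ref{item:H2} directly, using $|(D_{\xvec}f_\omega)^{-1}|\leq C_0$ and the cocycle property to get $|D_{\xvec}f_{\underline{\omega}}^n\vvec|\geq C_0^{-n}$. The paper instead appeals to a matrix inequality of the form $|M\wvec|\geq (d-1)^{(d-1)/4}\,|\det M|\,\|M\|_F^{1-d}\,|\wvec|$, combining the volume preservation \ref{item:H3} (so $\det D_{\xvec}f_{\underline{\omega}}^n=1$) with the Frobenius-norm bound from \ref{item:H2} to reach $|D_{\xvec}f_{\underline{\omega}}^n\vvec|\geq \sqrt{2}\,K^{-n}$. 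Your route is the more elementary one, avoiding the external reference and using only the inverse bound already assumed in \ref{item:H2}; the paper's route trades the inverse bound for the determinant identity, but the net effect is the same exponential-in-$n$ lower bound over finitely many steps.
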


\begin{proof}
By Proposition \ref{prop:lambda-all-directions}, given $\xvec\in\T^3$, $\underline{\omega}\in\Omega$ and $\vvec\in\S^2$, one can choose $\varepsilon\in (0,\lambda_1)$ such that there exists $N=N(\xvec,\underline{\omega},\vvec,\varepsilon)\in\N$ for which
\[
\left| \frac{1}{n}\log |D_{\xvec} f_{\underline{\omega}}^n\vvec| - \lambda_1\right| <\varepsilon
\]
holds true for every $n\geq N$. In particular, we obtain that $\log |D_{\xvec} f_{\underline{\omega}}^n| > n(\lambda_1-\varepsilon)$, or equivalently
\[
|D_{\xvec} f_{\underline{\omega}}^n\vvec| > \e^{(\lambda_1-\varepsilon)n},
\]
for all $n\geq N$. To deal with $n<N$ we use a classical result, see for instance \cite{YuGu97}, that states that for any matrix $M\in\R^{d\times d}$ and vector $\wvec\in\R^d$ there holds the bound
\[
|M\wvec| \geq (d-1)^{(d-1)/4}\frac{|\det M|}{\|M\|_F^{d-1}}|\wvec|,
\]
where $\|\cdot\|_F$ denotes the Frobenius norm. In our specific setting, notice that we have $\det (D_{\xvec} f_{\underline{\omega}}^n)=1$ for all choices of parameters, and $\|D_{\xvec} f_{\omega}\|_F^2 \leq K$ for some $K\geq 1$ that only depends on $U$, the upper bound of the parameters $|\A|,|\B|,|\C|\leq U$. Hence, for any fixed $\xvec\in\T^3$, $\underline{\omega}\in\Omega$ and $\vvec\in\S^2$ we find that 
\[
|D_{\xvec} f_{\underline{\omega}}^n\vvec| \geq \frac{\sqrt{2}}{K^n}.
\]
To conclude just notice that for all $n\geq N$ there holds $|D_{\xvec} f_{\underline{\omega}}^n\vvec| > \e^{(\lambda_1-\varepsilon)n}$, while for all $n<N$ we obtain $|D_{\xvec} f_{\underline{\omega}}^n\vvec| \geq \sqrt{2}K^{-N}$. Therefore there exists a constant $\hat{k}_1=\hat{k}_1(N)>0$ for which the claim of the corollary holds true.
\end{proof}

\begin{proof}[Proof of Theorem \ref{thm:fast-dynamo}]
To prove Theorem \ref{thm:fast-dynamo} we need to compute the $L^1$ norm of the magnetic field $B(t,\xvec)$ and apply Corollary \ref{corollary:dynamo}. The growth of all the rest $L^p$ norms follows by Hölder since the domain is bounded. First of all, notice that since the vector field \eqref{eq:random-abc-vectorfield} is divergence-free, and hence the flow is volume preserving, there holds that
\[
\|B(t)\|_{L^1} = \int_{\T^3} |B(t,\xvec)| \dd\xvec = \int_{\T^3} |B(t,\phi_t(\xvec))| \dd (\phi_t)_\#\xvec =  \int_{\T^3} |B(t,\phi_t(\xvec))| \dd \xvec.
\]
Then, using the convenient representation of $B$ in terms of the flow \eqref{eq:B-characteristics},
\[
\|B(t)\|_{L^1} = \int_{\T^3} |D_{\xvec}\phi_t B_0(\xvec)| \dd\xvec.
\]
Notice that if we choose $t=n\in \N$, then the flow can be written as $\phi_t(\xvec)=f_{\underline{\omega}}^n(\xvec)$ and we are in disposition to apply Corollary \ref{corollary:dynamo} for discrete times $t=n\in\N$. Fix $\varepsilon>0$ and $B_0\in L^1(\T^3)$ with $\|B_0\|_{L^1}>0$. By Corollary \ref{corollary:dynamo}, there exists an almost surely positive constant $\hat{k}_1$ depending on $(\xvec,\underline{\omega})\in\T^3\times\Omega$, $\varepsilon>0$ and $B_0\in L^1$ such that for all natural times $n\in\N$ we obtain
\[
\begin{split}
\|B(n)\|_{L^1} & \geq \int_{\{\xvec\in\T^3 :|B_0(\xvec)|\neq 0\}} \left|D_{\xvec}\phi_t \frac{B_0(\xvec)}{|B_0(\xvec)|}\right||B_0(\xvec)|\dd\xvec \\
& \geq \e^{(\lambda_1-\varepsilon)n} \int_{\{\xvec\in\T^3 :|B_0(\xvec)|\neq 0\}} \hat{k}_1\left(\xvec,\underline{\omega},\frac{B_0(\xvec)}{|B_0(\xvec)|},\varepsilon\right)|B_0(\xvec)|\dd\xvec.
\end{split}
\]
Since $\hat{k}_1>0$ for almost every $\xvec$ and $\underline{\omega}$, and since we assume $B_0\not\equiv 0$, then
\[
\hat{k}_2(\underline{\omega},B_0,\varepsilon) = \int_{\{\xvec\in\T^3 :|B_0(\xvec)|\neq 0\}} \hat{k}_1\left(\xvec,\underline{\omega},\frac{B_0(\xvec)}{|B_0(\xvec)|},\varepsilon\right)|B_0(\xvec)|\dd\xvec
\]
is $\Prob-$almost surely positive.

Finally, a standard stability estimate covers all times in between $n$ and $n+1$, up to a new constant $\hat{k}$ (almost surely positive) depending exclusively on $\hat{k}_2$, $\lambda_1$, $\varepsilon$, and the Lipschitz constant of $u$. 
\end{proof}

\begin{remark}
Observe that the proof of ideal kinematic dynamo only requires a vector field with a flow satisfying Hypothesis \ref{item:H1}-\ref{item:H3}, and with positive Lyapunov exponent and ergodic projective Markov chain. The ABC vector field \eqref{eq:abc} provides an example of such flow, but this is a general proof covering all vector fields with these properties.
\end{remark}

\addtocontents{toc}{\protect\setcounter{tocdepth}{0}}

\section*{Acknowledgement}

\addtocontents{toc}{\protect\setcounter{tocdepth}{1}}

MCZ gratefully acknowledges support by the Royal Society URF\textbackslash R1\textbackslash 191492 and the ERC/EPSRC Horizon Europe Guarantee EP/X020886/1. VNF gratefully acknowledges support by the ERC/EPSRC Horizon Europe Guarantee EP/X020886/1.
We would like to thank David Villringer for illuminating discussions and insightful comments.

\bibliographystyle{abbrv}
\bibliography{euler.bib}
\end{document}